\newtheorem{theorem}{Theorem}[section]
\newtheorem{lemma}[theorem]{Lemma}
\theoremstyle{definition}
\newtheorem{definition}[theorem]{Definition}
\theoremstyle{remark}
\newtheorem{remark}[theorem]{Remark}
\numberwithin{equation}{section}
\theoremstyle{plain}
\newtheorem{corollary}[theorem]{Corollary}
\newtheorem{proposition}[theorem]{Proposition}
\begin{document}
\title {A warped product version of the  Cheeger-Gromoll splitting theorem }

\author{William Wylie}
\address{215 Carnegie Building\\
Dept. of Math, Syracuse University\\
Syracuse, NY, 13244.}
\email{wwylie@syr.edu}
\urladdr{https://wwylie.expressions.syr.edu}
%\thanks{The author was supported in part by NSF-DMS grant 0905527. }
%\subjclass{53C25}
\keywords{}

\begin{abstract}
We prove a new generalization of the Cheeger-Gromoll splitting theorem where we obtain a warped product splitting under the existence of a line.  The curvature condition in our splitting is a  curvature dimension inequality of the form $CD(0,1)$.  Even though we have to allow warping in our splitting,  we are able to recover topological applications.  In particular, for a smooth compact Riemannian manifold admitting a density which is  $CD(0,1)$, we show that the fundamental group of $M$ is the fundamental group of a compact manifold with nonnegative sectional curvature.  If the space is also locally homogeneous, we obtain that the space also admits a metric of non-negative sectional curvature. Both of these obstructions give many examples of Riemannian metrics which do not admit any smooth density which is $CD(0,1)$. 
    \end{abstract}

\maketitle

\section{Introduction}

The Cheeger-Gromoll splitting theorem  states that a complete manifold with non-negative Ricci curvature that admits a line is isometric to a product metric of the form $ \mathbb{R}\times L $.  A  \emph{line} is a geodesic $\gamma:(-\infty, \infty) \rightarrow M$ which is minimizing between any two points on $\gamma$.    A simple way to construct a space with a line that  is not isometric to a product is to  take the topological product $\mathbb{R} \times L$ with metric $g=dr^2 + g_r$ where $g_r$ with $r\in (-\infty, \infty)$ is a smooth one-parameter family of smooth metrics on $L$.  The splitting theorem implies that any such complete metric $g$ has non-negative Ricci curvature if and only if $g_r = g_0$ and $g_0$ has non-negative Ricci curvature.    

In this paper we give a generalization of the splitting theorem which characterizes a more general class of spaces. If there is a positive function $u(r)$ on $\mathbb{R}$ such that $g = dr^2 + u^2(r) g_0$  for a fixed metric $g_0$ then $g$ is called a \emph{warped product over $\mathbb{R}$}.  Note that such metrics always  contains a line in the $\mathbb{R}$ direction. The curvature condition for our splitting theorem is a curvature dimension inequality, which is a generalization of a lower bound on Ricci curvature.       
\begin{definition}
Let $(M^n,g)$ be a Riemannian manifold and $f$ a smooth real valued function on $M$.  The $N$-dimensional generalized Ricci tensor of the triple $(M,g,f)$ is 
\[ \mathrm{Ric}_f^N  = \mathrm{Ric} +\mathrm{Hess} f - \frac{df \otimes df}{N-n}.  \]
 We say that  $(M,g,f)$ is  $\mathrm{CD}(\lambda, N)$, ($\lambda \in \mathbb{R}, N \in (-\infty, \infty]$) if   $\mathrm{Ric}_f^N \geq \lambda$. 
\end{definition}

If $(M,g)$ has  $\mathrm{Ric} \geq \lambda$ then  taking $f$ to be constant,  we will have $CD(\lambda, N)$ for all $N$.   %Also,  if $N_1> n$ and $N_2 <n$ then 
%\[ CD(\lambda, N_1) \Rightarrow CD(\lambda, \infty) \Rightarrow CD(\lambda, N_2), \] so $CD(0, N)$ spaces with $N<n$ form a larger class than the cases $N>n$ or $N=\infty$. % Curvature dimension inequalities have been much studied recently from many perspectives including their relationship to Ricci flow, optimal transport, general relativity, potential theory, and isoperimetric problems. 
    Until recently, the study of curvature dimension inequalities has focused on the cases $N>n$ or $N= \infty$.    However,  there is an emerging body of research for the more general condition where  $N<n$.  The first systematic investigations of the range $N<n$, \cite{Ohta, KolesnikovMilman} appeared almost simultaneously.   \cite{Ohta}  has shown that the $CD(K,N)$ condition $N<0$  is characterized by convexity properties of a relative entropy.  In particular,  this allows one to make sense of the $CD(K,N)$, $N<0$ condition on non-smooth spaces, also see the earlier works  of Ohta-Takatsu\cite{OhtaTakatsu1, OhtaTakatsu2}. Kolesnikov-Milman \cite{KolesnikovMilman} and Milman \cite{Milman1} have also studied isoperimetric, functional, and  concentration properties of spaces satisfying $CD(\lambda, N)$, $N< 1$. Also see Klartag \cite{Klartag}.   For  interesting examples of $CD(K,N)$ densities on the sphere,  see \cite{Milman2}.  As is pointed out by Milman,  the study  of curvature dimension inequalities on Euclidean space with $N<0$ was investigated in the 1970s by Borell \cite{Borell} and Brascamp-Lieb\cite{BrascampLieb}.   We should also caution the reader that our definition of $CD(\lambda, N)$, which matches \cite{Milman1},  is not equivalent to the definition given by Bakry-Emery \cite{BE} in the range $N \in [0,n)$, see \cite[Section 7.5]{Milman1}.   

We will add to these works  by generalizing the splitting theorem to  the $CD(0,N)$ condition where $N \leq 1$.  %As an application, we also show that study of the $CD(0, N)$ condition where $n<N$ is natural for the study of the relationship between curvature and topology as we show that the major topological obstructions to a compact manifold admitting a metric with non-negative Ricci curvature also obstruct supporting a function which is $CD(0,1)$.  
The first results for weighted Ricci curvature were proven by Lichnerowicz in \cite{Lichnerowicz1, Lichnerowicz2}.  One of his results (in our notation) is that if  there is a bounded  function $f$ such that $(M,g,f)$ is  $CD(0, \infty)$ then  the Cheeger-Gromoll splitting theorem holds.  Since hyperbolic space admits an unbounded density which is $CD(0, \infty)$,  the assumption that $f$ be bounded is necessary.   Fang-Li-Zhang \cite{FangLiZhang}  also showed that  the splitting theorem holds for  $CD(0, N)$,  $N>n$ with no assumptions on $f$, and improve  Lichnerowicz's result for $CD(0,\infty)$ by  only assuming an upper bound on $f$.   The splitting theorem for non-smooth spaces  in the case $N>1$ has also recently been proven by Gigli \cite{Gigli1, Gigli2}. 

We will show  that  Lichnerowicz's smooth splitting theorem  holds for the weaker  $CD(0,N)$ condition  where $N < 1$.  On the other hand, the theorem is not true when $N=1$ as  there are warped product spaces that admit a bounded function $f$ so that the space is $CD(0,1)$.  Our main result says that these are the only such examples.  
 % Thus we see that that $N=1$ case is a critical parameter for the splitting theorem.  While we obtain a weaker splitting theorem in this case, we are  able through some careful analysis of the spaces in our splitting to recover the applications of the splitting theorem to the fundamental group of compact spaces  and locally homogeneous space which are $CD(0,1)$..  

%In the process of proving our results we also show that  Lichnerowicz  and Fang-Li-Zhang's restriction that $f$  be bounded from above  can be weakened to a condition  we call \emph{bounded energy distortion}.  We will delay discussing this somewhat technical assumption until the next section, but we do note that the way we phrase the  condition makes sense for non-gradient fields and our main result works in that generality. 
%
%Now we state our main splitting theorem. 

\begin{theorem}  \label{Theorem:Splitting} Suppose that a complete Riemannian manifold $(M,g)$  admits a line  and a function $f$ which is bounded above which is $CD(0, 1)$,  then $(M,g)$ is a  warped product over $\mathbb{R}$.  %That is,  $M$ is diffeomorphic to a product $\mathbb{R} \times N$ with a metric of the form  $g_M = dr^2 + e^{\frac{2 \phi}{n-1}}g_N$ where $\phi: M \rightarrow \mathbb{R}$.   
%If moreover $X = \nabla f$ then $(M,g)$ is a warped product over $\mathbb{R}$. 
\end{theorem}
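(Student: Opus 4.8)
The plan is to run the classical Cheeger--Gromoll argument (Busemann functions, sub/super-harmonicity, and the maximum principle) with the Laplacian replaced by the weighted Laplacian, and to track carefully what the weaker $CD(0,1)$ condition gives us. First I would fix a line $\gamma$ and form the two Busemann functions $b^+$ and $b^-$ associated to the two rays of $\gamma$. The key analytic input in the unweighted proof is that each Busemann function is subharmonic when $\Ric \ge 0$; here I would instead establish that $b^\pm$ are subsolutions of the weighted Laplacian $\Delta_f = \Delta - \langle \nabla f, \nabla \cdot\rangle$. The natural way to get this is a weighted Laplacian comparison for Busemann functions: along a ray, the weighted mean curvature of the level sets is controlled by the $CD(0,1)$ hypothesis. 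The point where $N \le 1$ (rather than $N > n$) matters is in the Riccati/Bochner estimate: with $\Ric_f^N \ge 0$ and $N \le 1$ the term $-\frac{(\partial_r f)^2}{N-n}$ has a favorable sign, and one gets that the weighted mean curvature $m_f = m - \partial_r f$ of the geodesic spheres is non-positive in the limit, hence $\Delta_f b^\pm \le 0$ in the barrier (support) sense.

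Next, $b := b^+ + b^-$ satisfies $b \ge 0$ with equality along $\gamma$, and $\Delta_f b \le 0$ in the support sense, so by the strong maximum principle for $\Delta_f$ (which is a uniformly elliptic operator with smooth coefficients once we know $f$ is smooth) we get $b \equiv 0$. Thus $b^+ = -b^-$ is a smooth function $\rho$ with $\Delta_f \rho = 0$ and $|\nabla \rho| \equiv 1$. Here is where boundedness-above of $f$ enters: in the unweighted case one next shows the Busemann function is smooth and then splits off a line. In the weighted case $|\nabla \rho|=1$ forces, via the weighted Bochner formula
\[
0 = \tfrac12 \Delta_f |\nabla \rho|^2 = |\Hess \rho|^2 + \langle \nabla \Delta_f \rho, \nabla \rho\rangle + \Ric_f^N(\nabla \rho, \nabla \rho) + \tfrac{(\partial_\rho f)^2}{N-n},
\]
that $\Hess \rho = 0$ and, crucially, $\Ric_f^N(\nabla\rho,\nabla\rho)=0$ and $\partial_\rho f \cdot$(something) is constrained. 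Since $\Hess \rho = 0$, $\nabla\rho$ is a parallel unit field, so $(M,g)$ splits \emph{isometrically} as $\R \times L$ with $\rho$ the projection — but $f$ need not be constant in the $\R$ direction, and that is exactly what produces warping rather than a metric product in the statement. So I would not expect an isometric product; instead I would analyze the restriction of $f$ to the $\R$-factor.

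With $\Hess\rho = 0$ in hand, write $r = \rho$ as the coordinate on the $\R$-factor. The weighted harmonicity $\Delta_f \rho = 0$ becomes $\partial_r f = $ const? Not quite — $\Delta \rho = 0$ already since $\Hess\rho=0$, so $\Delta_f\rho = -\langle\nabla f,\nabla\rho\rangle = -\partial_r f = 0$, giving $\partial_r f \equiv 0$, i.e. $f$ is independent of $r$ — which would give a genuine product and contradict the claim that warping is necessary. The resolution, and the step I expect to be the real obstacle, is that the maximum principle argument does \emph{not} directly give $\Delta_f b \le 0$ with the plain $\Delta_f$; rather the correct comparison involves a \emph{shifted} dimension and the estimate one actually gets is on $b^\pm$ composed with a suitable function, or equivalently the Busemann function satisfies $\Delta b^\pm - \partial_r f + (\text{mean curvature term involving } N=1) \le 0$. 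Getting the bookkeeping right so that the limiting equality case forces $g = dr^2 + u(r)^2 g_0$ — i.e. extracting from $\Ric_f^1(\nabla r,\nabla r)=0$ and the rigidity in the Riccati comparison that the induced metrics on the level sets differ only by a conformal factor $u(r)^2$ — is the heart of the matter. Concretely, after establishing $|\nabla b^+| = 1$ and the rigidity of the weighted comparison, I would show the level sets $\{b^+ = t\}$ are all diffeomorphic via the flow of $\nabla b^+$ and that the pulled-back metrics satisfy $\partial_t g_t = 2\,\frac{u'(t)}{u(t)} g_t$ for a function $u$ determined by $f$ (roughly $u = e^{-f/(n-1)}$ up to normalization, consistent with the $N=1$, so $N-n = 1-n$, normalization in $\Ric_f^N$), whence $g = dr^2 + u(r)^2 g_0$. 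I would finish by checking completeness of $(L,g_0)$ and that the warping function is globally defined and positive, which follows since $f$ is smooth and bounded above so $u$ is bounded below away from $0$.
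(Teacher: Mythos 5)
You have the right overall skeleton (Busemann functions, support-sense Laplacian comparison, maximum principle, Bochner rigidity), but there is a genuine gap at the very place you flag as ``the heart of the matter,'' and you do not actually resolve it. The computation you write down,
\[
0 = \tfrac12 \Delta_f |\nabla \rho|^2 = |\mathrm{Hess}\,\rho|^2 + \mathrm{Ric}_f^{1}(\nabla \rho, \nabla \rho) + \frac{(\partial_\rho f)^2}{1-n},
\]
does \emph{not} yield $\mathrm{Hess}\,\rho = 0$: since $1-n<0$ the last term enters with a minus sign, so one only gets $|\mathrm{Hess}\,\rho|^2 \le (\partial_\rho f)^2/(n-1)$ together with $\mathrm{Ric}^1_f(\nabla\rho,\nabla\rho)=0$. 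You correctly observe that the conclusion $\mathrm{Hess}\,\rho=0$ leads to a contradiction with the warping phenomenon, but you then only speculate that ``the correct comparison involves a shifted dimension'' without identifying what that modification is or why it works.

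The missing ingredient is the paper's new Bochner inequality: one multiplies the weighted Bochner identity through by $v^2 = e^{2f/(n-1)}$ before applying Cauchy--Schwarz. Writing $\Delta h = \Delta_f h + g(\nabla h, \nabla f)$ and using $|\mathrm{Hess}\, h|^2 \ge (\Delta h)^2/(n-1)$ (valid for distance functions since $\mathrm{Hess}\, r$ has at most $n-1$ nonzero eigenvalues), the cross term $g(\nabla h,\nabla f)\,\Delta_f h$ that appears in the square is exactly absorbed by the derivative $\nabla(v^2 \Delta_f h)$, giving
\[
\tfrac12 v^2 \Delta_f |\nabla h|^2 \ge v^2 \frac{(\Delta_f h)^2}{n-1} + g\bigl(\nabla h, \nabla(v^2 \Delta_f h)\bigr)
\]
under $CD(0,1)$. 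Integrating the resulting Riccati inequality $\dot\lambda \le -\lambda^2/(v^2(n-1))$ for $\lambda = v^2\Delta_f r$ yields the genuinely new Laplacian comparison $\Delta_f r \le (n-1)\bigl(v^2(q)\int_0^{r} v^{-2}\bigr)^{-1}$, where the divergence of $\int_0^\infty v^{-2}$ (coming from $f$ bounded above) is what gives $\Delta_f b^\pm \ge 0$. The rigidity case of \emph{this} inequality does not force $\mathrm{Hess}\, r = 0$; it forces the $(n-1)$ nonzero eigenvalues of $\mathrm{Hess}\, r$ to be equal, i.e.\ $\mathrm{Hess}\, r = \frac{g(\nabla f,\nabla r)}{n-1}\,g_r$ on the level sets (total umbilicity, not totally geodesic), which integrates to $g_r = e^{2(f(r,\cdot)-f(0,\cdot))/(n-1)} g_0$ and hence a twisted product; Proposition~\ref{Prop:WarpedSplittingGradient} then upgrades this to a warped product in the gradient case. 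Your closing sketch anticipates the shape of the answer $\partial_t g_t = 2(u'/u)g_t$ with $u$ determined by $f$ (modulo a sign error: the warping factor is $e^{+\phi/(n-1)}$, not $e^{-f/(n-1)}$), but without the $v^2$-modified Bochner formula the argument you propose does not produce this evolution equation.

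A smaller point: you describe $b^\pm$ as ``subsolutions'' of $\Delta_f$ and then immediately write $\Delta_f b^\pm \le 0$, which is the supersolution inequality; and $b^+ + b^-$ satisfies $b^+ + b^- \le 0$ (not $\ge 0$) by the triangle inequality. These sign slips cancel in the maximum-principle step, so they are harmless, but they signal some imprecision in the comparison step as well.
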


%The conclusion to the theorem is sharp  see Propositions .   Also note that any twisted product  contains a line in the $\mathbb{R}$ direction.  The usual isometric product splitting is included in the conclusion of the theorem as the case where the function $\phi$ is constant.    
As a corollary of the proof of Theorem \ref{Theorem:Splitting} we obtain the isometric product splitting for  $CD(0, N)$  with  $N < 1$. 

\begin{corollary} \label{Corollary:Splitting} Suppose that $(M,g)$ is a complete Riemannian manifold  that admits a line   and a function $f$ which is bounded above which is $CD(0, N)$  for some $N < 1$, then  $M$ splits isometrically as a product $\mathbb{R}\times L$ and $f$ is a function on $L$ only.
\end{corollary}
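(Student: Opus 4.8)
The plan is to adapt the classical Cheeger-Gromoll argument using Busemann functions, but tracking the weighted quantities carefully so that the degenerate case $N=1$ is handled through the warping function. Given a line $\gamma$, let $b^{+}$ and $b^{-}$ be the Busemann functions associated to the two ends of $\gamma$, and set $b = b^{+} + b^{-}$. The function $b$ is nonnegative, vanishes along $\gamma$, and by the triangle inequality is bounded above by $0$ is not right --- rather $b \geq 0$ everywhere and $b \equiv 0$ on $\gamma$. The key is the weighted Laplacian comparison: for $\mathrm{CD}(0,N)$ with $N \leq 1$, the $f$-Laplacian of a Busemann function satisfies $\Delta_f b^{\pm} \geq 0$ in the barrier/support sense, where $\Delta_f = \Delta - \langle \nabla f, \nabla \cdot\rangle$. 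This is where the hypothesis that $f$ is bounded above enters: one needs the weighted Laplacian comparison along rays, and for $N < n$ the usual Bochner argument must be replaced by an ODE comparison for the weighted mean curvature of distance spheres, using that the relevant Riccati-type inequality coming from $\mathrm{Ric}_f^N \geq 0$ with $N \leq 1$ still forces monotonicity once $f$ is controlled from above along the ray (so that $f$ cannot run off to $+\infty$ and destroy the comparison).

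Next I would run the maximum principle. Since $\Delta_f b^{+} \geq 0$ and $\Delta_f b^{-} \geq 0$ in the support sense, $\Delta_f b \geq 0$; but $b \geq 0$ with $b = 0$ on $\gamma$, so $b$ attains an interior minimum, and the strong maximum principle for $\Delta_f$ (which is a uniformly elliptic operator with smooth coefficients, the boundedness-above of $f$ being irrelevant here since we only need local regularity) forces $b \equiv 0$. Hence $b^{+} = -b^{-}$ is a smooth function, call it $r$, with $|\nabla r| = 1$ everywhere, and both $b^{+}$ and $-b^{-}$ are $f$-harmonic: $\Delta_f r = 0$. The level sets of $r$ foliate $M$, and the flow of $\nabla r$ gives a diffeomorphism $M \cong \mathbb{R} \times L$ where $L = r^{-1}(0)$, with $g = dr^2 + g_r$ for a family of metrics $g_r$ on $L$. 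At this stage the classical theorem would now upgrade $\Delta b = 0$ to $\mathrm{Hess}\, b = 0$ via Bochner; here I expect instead that the $\mathrm{CD}(0,1)$ condition, applied to the $f$-harmonic function $r$, yields $\mathrm{Hess}\, r = \frac{u'(r)}{u(r)}\big(g - dr \otimes dr\big)$ for some positive function $u$ of $r$ alone --- i.e. the second fundamental form of each level set is pure trace and depends only on $r$ --- which is exactly the statement that $g = dr^2 + u^2(r) g_0$ is a warped product over $\mathbb{R}$.

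The main obstacle, and the heart of the proof, is extracting this rigidity from $\mathrm{Ric}_f^N \geq 0$ with $N = 1$. The weighted Bochner formula applied to $r$ reads $0 = \Delta_f \tfrac{1}{2}|\nabla r|^2 = |\mathrm{Hess}\, r|^2 + \langle \nabla r, \nabla \Delta_f r\rangle + \mathrm{Ric}_f^{\infty}(\nabla r, \nabla r)$, and $\mathrm{Ric}_f^{\infty}(\nabla r,\nabla r) = \mathrm{Ric}_f^{N}(\nabla r, \nabla r) + \frac{1}{N-n}\langle \nabla f, \nabla r\rangle^2$; with $N - n < 0$ the last term is $\leq 0$, so the naive argument only gives $|\mathrm{Hess}\, r|^2 \leq \frac{1}{n-N}(\nabla_{\nabla r} f)^2$, not $\mathrm{Hess}\, r = 0$. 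The resolution must be to combine this with the Cauchy-Schwarz deficit: writing $\mathrm{Hess}\, r = A$ and using $\Delta_f r = 0$ to relate $\mathrm{tr}\, A = \Delta r$ to $\langle \nabla f, \nabla r\rangle$, one shows the inequality is saturated precisely when $A$ is a rank-one-corrected multiple of the identity on $(\nabla r)^{\perp}$ and $\nabla f$ is proportional to $\nabla r$. Pushing this through --- showing $f$ is a function of $r$ alone, that $A$ restricted to $(\nabla r)^\perp$ equals $\frac{u'}{u}\,\mathrm{Id}$, and that $u' / u$ is a function of $r$ alone by integrating the Riccati equation along the $\nabla r$-flow --- is the technical core. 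The boundedness of $f$ from above is used a second time here: it guarantees the line $\gamma$ is a geodesic along which $f$ is constant (since $f\circ\gamma$ is convex, being $\Delta_f$-subharmonic restricted appropriately, and bounded above, hence constant), which pins down $u$ and normalizes the splitting. For Corollary~\ref{Corollary:Splitting}, the same argument with $N < 1$ strictly gives $\frac{1}{N-n}\langle\nabla f,\nabla r\rangle^2 = 0$ forcing $\langle \nabla f, \nabla r\rangle \equiv 0$ and then $\mathrm{Hess}\, r = 0$ outright, so $u$ is constant, the splitting is isometric, and $f$ descends to $L$.
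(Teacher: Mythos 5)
Your proposal is essentially correct, but it takes a more laborious route and contains a couple of inaccurate intermediate claims. The paper's own proof of this corollary is a three-line deduction that simply invokes Theorem~\ref{Theorem:Splitting} (whose proof is what your Busemann-function and rigidity discussion amounts to re-deriving): once one knows $g = dr^2 + e^{2\phi(r)/(n-1)}g_L$ with $f = \phi(r) + f_L$, the split-space structure gives $\mathrm{Ric}_f^1(\partial_r,\partial_r)=0$, and then the algebraic identity $\mathrm{Ric}_f^N(\partial_r,\partial_r) = \mathrm{Ric}_f^1(\partial_r,\partial_r) + \tfrac{N-1}{(n-1)(n-N)}(\phi')^2$ together with $N<1$ forces $\phi'\equiv 0$, so the warp is trivial and $f$ descends to $L$. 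Your endgame --- that $\Delta_f r=0$ plus Bochner, $\mathrm{Ric}_f^\infty = \mathrm{Ric}_f^N + \tfrac{1}{N-n}\langle\nabla f,\nabla r\rangle^2$, and the Cauchy--Schwarz bound $|\mathrm{Hess}\,r|^2 \geq (\Delta r)^2/(n-1) = \langle\nabla f,\nabla r\rangle^2/(n-1)$ jointly force $\langle\nabla f,\nabla r\rangle\equiv 0$ and then $\mathrm{Hess}\,r = 0$ when $N<1$ --- is a correct and genuinely slightly different way to extract the isometric splitting, bypassing the explicit warped-product computation; but you state it too elliptically (``the same argument with $N<1$ strictly gives...'') without actually exhibiting the Cauchy--Schwarz deficit comparison that makes it work.

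Two claims in your intermediate $N=1$ discussion are wrong, even though they do not derail the corollary. First, the rigidity for $CD(0,1)$ does \emph{not} force $\nabla f$ to be proportional to $\nabla r$; the correct conclusion (Lemma~\ref{Lemma:Rigidity}) is $f = \phi(r) + f_L(x)$, so $\nabla f$ generally has a nontrivial component tangent to $L$. Second, the assertion that the boundedness of $f$ from above ``guarantees the line $\gamma$ is a geodesic along which $f$ is constant'' is not used in the paper and is not needed; the upper bound on $f$ enters only to force the integral $\int_0^{r} e^{-2f/(n-1)}\,ds$ to diverge, which is what makes the weighted Laplacian comparison yield $\Delta_f b^{\pm}\geq 0$ for the Busemann functions.
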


\begin{remark} We  actually  prove versions of Theorem \ref{Theorem:Splitting} and Corollary \ref{Corollary:Splitting} that are more general  in two ways. The first is that  we can weaken the upper bound  on $f$ assumption to an integral condition along geodesics that we call $f$-completeness.  This condition turns out to be equivalent to the completeness of a certain weighted  affine connection, see \cite{WylieYeroshkin} for further study in this direction.  Secondly  we have versions where the function $f$ can be replaced with a vector field $X$.  We also prove a version of the splitting theorem for manifolds with boundary.  We delay discussing these results until Sections 5 and 6. 
\end{remark}

By applying the Cheeger-Gromoll splitting theorem iteratively one can show that a  complete non-compact manifold of non-negative Ricci curvature is isometric to a product of a Euclidean space and a space with no lines.  We also obtain a sharp structure theorem for spaces which are $CD(0,1)$ with $f$ bounded above. We obtain a topological splitting $M = \mathbb{R}^k \times N$, but  the metric is a warped product  $g = dr^2+ u^2(r)\left( g_{\mathbb{R}^{k-1}} + g_N\right)$, where $g_N$ is a metric with no lines.   See Theorem \ref{Theorem:IterativeSplitting} below for the precise statement. 

 Despite the weaker geometric splitting,  we are able to recover the classical applications  to topology and  homogeneous spaces of the Cheeger-Gromoll splitting theorem.   For example, an obvious corollary is that if $(M,g,f)$ is  $CD(0,1)$ and $f$ is bounded above then $M$ has at most two ends, and only one end if there is a point with  $\mathrm{Ric}_f^{1} >0$.  

Another topological result comes from applying the splitting theorem to the universal cover of a compact $(M,g,f)$ which is  $CD(0,1)$.  When equipped with the pullback of $f$ and $g$, the  universal cover will also be $CD(0,1)$ and the potential function will be bounded.    In this situation we obtain a sharp geometric  structure theorem for the universal cover,  see Theorem \ref{Theorem:CompactUniversalCover} for the precise statement.   Using a result of Wilking \cite{Wilking}  along with the arguments of Cheeger-Gromoll we obtain the following statement about the topology of compact $CD(0,1)$ spaces. 
%
%\begin{theorem} \label{Theorem:CompactUniversalCover} Let $(M,g,f)$ be compact and $CD(0,1)$, let $(\widetilde{M}, \widetilde{g}, \widetilde{f})$ be the universal cover of $M$ with the covering metric $\widetilde{g}$ and density $\widetilde{f}$ the pullback of $f$ to $\widetilde{M}$.  Then either 
%\begin{enumerate}
%\item $\widetilde{M}$ is compact, 
%\item $(\widetilde{M}, \widetilde{g})$ is isometric to a product of a flat metric on $\mathbb{R}^k$ and a compact manifold $N$ and $\widetilde{f}$ is a function on $N$. 
%\item $\widetilde{M}$ is diffeomorphic to $\mathbb{R} \times N$ where $N$ is compact and  $ \tilde{g} = dr^2 + e^{\frac{2\phi}{n-1}}g_N$, $\tilde{f} = \phi + f_N$,  and  $(N, g_N, f_N)$  is $CD(K, 0)$ for some $K>0$. 
%\end{enumerate}
%Moreover, if $(M,g,f)$ is $CD(0,N)$ for some $N<1$ then only the first two cases occur. 
%\end{theorem}
%
% Cheeger-Gromoll show that only cases (1) and (2) occur for nonnegative Ricci curvature.  On the other hand,  there are clearly spaces of the form (3) in the $CD(0,1)$ case.  Despite the existence of these slightly exotic examples, we can still recover results for the fundamental group that Cheeger-Gromoll proved for spaces of non-negative Ricci curvature. 

\begin{theorem} \label{Thm:FundGroup} Suppose that $(M,g,f)$ is  $CD(0,1)$ with $M$ compact. Then, 
\begin{enumerate}
\item  $\pi_1(M)$ is the fundamental group of a compact manifold with nonnegative sectional curvature. 
\item $b_1(M) \leq n$ and $b_1(M)=n$ if and only if $M$ is isometric to a flat manifold and $f$ is constant. 
\item If moreover, there is a point where $\mathrm{Ric}_f^{1} >0$, then $\pi_1(M)$ is finite. 
\end{enumerate}
\end{theorem}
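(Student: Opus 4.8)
The plan is to mimic the Cheeger--Gromoll argument for compact manifolds with $\Ric \geq 0$, replacing the use of the classical splitting theorem with Theorem \ref{Theorem:Splitting} applied to the universal cover, and supplying the extra topological input (Wilking's theorem) needed to absorb the warping. First I would pass to the universal cover $\widetilde{M}$ with the pulled-back metric $\tilde g$ and potential $\tilde f$. Since $M$ is compact, $f$ is bounded on $M$, hence $\tilde f$ is bounded (it is invariant under the deck transformations), and $(\widetilde M, \tilde g, \tilde f)$ is $CD(0,1)$ because the generalized Ricci tensor is local. By the iterated form of the splitting theorem (Theorem \ref{Theorem:IterativeSplitting} / Theorem \ref{Theorem:CompactUniversalCover}, assumed available), $\widetilde M$ splits topologically as $\mathbb{R}^k \times N$ with a warped metric $\tilde g = dr^2 + u^2(r)\bigl(g_{\mathbb{R}^{k-1}} + g_N\bigr)$, where $(N,g_N)$ is complete with no lines; the compactness of $M$ forces $N$ to be compact (otherwise $N$ itself, being complete noncompact, would contain a line, or one appeals to the standard argument that a noncompact soul factor would violate cocompactness of the $\pi_1(M)$-action).

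Next I would analyze the action of $\Gamma := \pi_1(M)$ on this splitting. As in Cheeger--Gromoll, one shows the $\mathbb{R}^k$ Euclidean factor is preserved: deck transformations are isometries of the warped product, and since they must preserve the line structure and the warping function $u(r)$ (any isometry carries lines to lines and $r$ is essentially determined by the Busemann functions), the $\mathbb{R}^k$ factor is $\Gamma$-invariant and $\Gamma$ acts on it by affine/isometric maps of $\mathbb{R}^k$, while acting on the compact factor $N$ by isometries of $g_N$. This gives a homomorphism $\Gamma \to \mathrm{Isom}(\mathbb{R}^k) \times \mathrm{Isom}(N, g_N)$ with the image projecting to a cocompact group in $\mathrm{Isom}(\mathbb{R}^k)$. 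By Bieberbach, the $\mathbb{R}^k$-projection has a finite-index subgroup that is a lattice of translations; so up to finite index, $\Gamma$ sits inside $\mathbb{Z}^k \ltimes (\text{finite}) \times \mathrm{Isom}(N)$.

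For part (1), I would invoke Wilking's theorem \cite{Wilking}: if a finite-index subgroup of $\Gamma$ is the fundamental group of a compact nonnegatively curved manifold, so is $\Gamma$. So it suffices to realize the finite-index subgroup. The compact factor $N$ with its metric $g_N$ need not have $\Ric \geq 0$, but $\Gamma$ acts on $N$ by isometries through a cocompact (indeed the action is properly discontinuous and $N/\Gamma_N$ relates to $M$); here I would instead use that $\Gamma$ acts freely, properly discontinuously, cocompactly on $\mathbb{R}^k \times N$, quotient by the translation lattice in the $\mathbb{R}^k$ direction to get a compact manifold fibering over a torus with fiber a compact manifold on which the residual finite group acts, and then one needs a nonnegatively curved model — this is exactly where one invokes that $N$ carries, or can be replaced by, something admitting nonnegative curvature. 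The cleanest route is: the residual group acting on $N$ is finite (after passing to finite index the $\mathbb{R}^k$-translation part is split off), so $\Gamma$ has a finite-index subgroup isomorphic to $\mathbb{Z}^k \times (\text{deck group of a finite cover of } N)$... For part (2), $b_1(M) \le k \le n$ follows since $b_1$ is carried by the $\mathbb{R}^k$-factor (the torus quotient), with equality $k=n$ forcing $N$ to be a point and $u$ constant, i.e. $\widetilde M = \mathbb{R}^n$ flat and $f$ constant. For part (3), a point with $\Ric_f^1 > 0$ rules out a line through that point's orbit, forcing $k=0$, hence $\widetilde M = N$ is compact and $\Gamma = \pi_1(M)$ is finite.

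The main obstacle is part (1): the compact cross-section $N$ of the warped splitting comes with a metric $g_N$ that has no a priori curvature bound, so unlike in Cheeger--Gromoll one cannot simply say "the soul is nonnegatively curved." The resolution must extract from the structure of $\Gamma$ alone — via the Bieberbach/Wilking machinery — that $\pi_1(M)$ is realized by *some* nonnegatively curved compact manifold (e.g. a flat torus bundle, or using that a finite-index subgroup is $\pi_1$ of a flat manifold times a finite group quotient and applying Wilking). Getting this reduction precise, and in particular showing the residual isometric $N$-action has finite image after passing to a finite-index translation subgroup, is the delicate point; everything else is a faithful transcription of the classical proof with Theorem \ref{Theorem:Splitting} in place of Cheeger--Gromoll.
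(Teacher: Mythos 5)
Your overall strategy matches the paper's: pass to the universal cover, apply the structure theorem for compact $CD(0,1)$ spaces (Theorem~\ref{Theorem:CompactUniversalCover}), observe that deck transformations split as isometries of the $\mathbb{R}^k$ factor times isometries of the compact cross-section, and finish with Wilking. Parts (2) and (3) are argued essentially as in the paper, though your phrasing of (3) --- that $\Ric_f^1>0$ at a point rules out lines ``through that point's orbit'' --- is slightly too weak as stated. The correct argument, which the paper uses, is that \emph{any} line forces the warped-product structure of Lemma~\ref{Lemma:Rigidity}, and in a warped product $\Ric_f^1(\partial_r,\partial_r)=0$ at \emph{every} point; so a single point of strict positivity excludes all lines, whence $\widetilde M$ is compact.

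The genuine gap is in part (1), and you correctly flag it yourself. You misquote Wilking's theorem as a finite-index permanence principle (``if a finite-index subgroup is the fundamental group of a compact nonnegatively curved manifold, so is $\Gamma$''), and then a Bieberbach detour leaves you unable to close the argument. The paper's route is cleaner and uses a different form of Wilking's result. Since every $F\in\pi_1(M)$ splits as $F_1\times F_2$ with $F_1\in\mathrm{Isom}(\mathbb{R}^k)$ and $F_2\in\mathrm{Isom}(L,g_L)$ (this splitting of isometries requires the careful analysis in Lemma~\ref{Lemma:IterativeSplittingBounded} and Corollary~\ref{Corollary:ProjectLines}, since in a warped product lines need not project to or lift from lines --- your ``$r$ is essentially determined by the Busemann functions'' glosses over this), the projection to $\mathrm{Isom}(\mathbb{R}^k)$ yields an exact sequence $0\to E\to\pi_1(M)\to\Gamma\to 0$. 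Here $E$ is a discrete subgroup of the compact Lie group $\mathrm{Isom}(L,g_L)$, hence finite, and $\Gamma$ is a crystallographic group in $\mathrm{Isom}(\mathbb{R}^k)$ because $L$ is compact and the $\pi_1(M)$-action is cocompact. Wilking's Theorem~2.1 then directly asserts that any such finite-by-crystallographic extension is the fundamental group of a compact nonnegatively curved manifold; there is no need to pass to a finite-index subgroup or split off the translation lattice.

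In short: right scaffolding, but the step you explicitly call ``delicate'' --- the precise invocation of Wilking --- is not a loose end that can be finessed by Bieberbach manipulations; it needs the exact-sequence formulation, and the finiteness of $E$ is what makes Wilking's hypothesis hold. That, together with the line-projection lemmas needed to justify the isometry splitting, constitutes the remaining content.
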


%Part (3) improves an earlier result of the author who showed in \cite{Wylie} that if $(M,g,f)$ is compact and $CD(\lambda, 1)$, $\lambda>0$ then $\pi_1(M)$ is finite.  

This result gives many examples of compact Riemannian manifolds which do not support any function $f$ which is $CD(0,1)$.   In fact, it is an open question whether there is a topological difference between spaces which are $CD(0, N)$ and spaces of non-negative Ricci curvature.  That is, we have the following question:  if $(M,g,f)$  is  $CD(0,N)$ does $M$ also  support a metric with non-negative Ricci curvature?   \cite[Proposition 3.7]{KennardWylie} implies this is true if $(M,g)$ is  compact and  homogeneous and, in fact,  $g$  must have non-negative Ricci curvature.  
%For $CD(0,1)$ we extend this result to the case of a bounded density on a non-compact homogeneous space. 
%
%\begin{theorem}  \label{Thm:Homogeneous} Suppose $(M,g,f)$ is $CD(0,1)$, $f$ is bounded above, and $(M,g)$ is a homogeneous space,  then $M$ splits isometrically as $M = N \times \mathbb{R}^k$ where $N$ is a compact homogeneous space with nonnegative Ricci curvature and $\mathbb{R}^k$ is a flat factor. In particular, $g$ has non-negative Ricci curvature. 
%\end{theorem}
%
%We note that the assumption that $f$ be bounded from above is necessary for the conclusion of  this theorem as hyperbolic space admits a density which is $CD(0,1)$.  
Using the splitting theorem we also obtain a complete  classification to compact locally homogeneous spaces which are $CD(0,1)$. 

\begin{theorem} \label{Theorem:LocallyHomogeneous} Suppose $(M,g,f)$ is $CD(0,1)$ where $(M,g)$ is a compact locally homogeneous space.  Then $M$ is a flat bundle over a compact locally homogeneous space of non-negative Ricci curvature.  In particular, $M$ admits a (possibly different)  invariant metric of nonnegative sectional curvature. 
\end{theorem}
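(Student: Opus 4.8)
The plan is to lift everything to the universal cover and combine the warped‑product splitting (Theorem~\ref{Theorem:CompactUniversalCover}) with classical facts about homogeneous spaces; the only genuinely new ingredient is a rigidity statement forcing the warping function to be constant. First I would pass to the universal cover $\pi\colon\tilde M\to M$, with $\tilde g=\pi^\ast g$ and $\tilde f=\pi^\ast f$. Since $M$ is compact, $f$ is bounded, hence $\tilde f$ is bounded (in particular bounded above); $(\tilde M,\tilde g,\tilde f)$ is again $\mathrm{CD}(0,1)$, this being a pointwise tensorial condition; and $(\tilde M,\tilde g)$ is complete, simply connected, and (lifting local isometries) locally homogeneous, hence homogeneous, a complete simply connected locally homogeneous manifold being homogeneous. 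By Theorem~\ref{Theorem:CompactUniversalCover}, applied to the compact $\mathrm{CD}(0,1)$ space $M$, we may write $\tilde M\cong\mathbb R^k\times\tilde N$ topologically and $\tilde g=dr^2+u^2(r)\bigl(g_{\mathbb R^{k-1}}\oplus g_{\tilde N}\bigr)$ metrically, with $u>0$ on $\mathbb R$ and $\tilde N$ a compact simply connected manifold.

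The crux is to show that $u$ is constant. Homogeneity of $\tilde M$ makes its Ricci spectrum constant, and since in the warped product $\partial_r$ is a Ricci eigenvector with eigenvalue $-(n-1)u''/u$, this eigenvalue is (by continuity and finiteness of the spectrum) constant in $r$, so $u''/u\equiv\mu$ for a constant $\mu$. If $\mu<0$ the equation $u''=\mu u$ has only sign‑changing solutions, contradicting $u>0$; if $\mu=0$ then $u$ is affine and positive on all of $\mathbb R$, hence constant; and if $\mu>0$ then each coordinate curve $\gamma(t)=(t,p_0)$ is a complete geodesic with $\mathrm{Ric}(\dot\gamma,\dot\gamma)=-(n-1)\mu<0$, so along $\gamma$ the $\mathrm{CD}(0,1)$ inequality $\mathrm{Ric}+\mathrm{Hess}\,\tilde f+\tfrac{1}{n-1}\,d\tilde f\otimes d\tilde f\ge 0$ forces $w:=e^{\tilde f/(n-1)}$ to satisfy $(w\circ\gamma)''\ge\mu\,(w\circ\gamma)>0$; thus $w\circ\gamma$ is strictly convex on $\mathbb R$, and as $\tilde f$ is bounded $w\circ\gamma$ is bounded, so it would be constant — a contradiction. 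Therefore $u$ is constant and $\tilde M=\mathbb R^k\times\tilde N$ is a Riemannian product.

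With the product in hand the remaining steps are classical bookkeeping. Restricting $\mathrm{CD}(0,1)$ to the flat $\mathbb R^k$‑directions shows $e^{\tilde f/(n-1)}$ is convex, hence (being bounded) constant, along those complete lines, so $\tilde f$ depends only on $\tilde N$; restricting it to the $\tilde N$‑directions then shows $(\tilde N,g_{\tilde N},\tilde f|_{\tilde N})$ satisfies $\mathrm{CD}(0,1-k)$, and since $\tilde N$ is compact and homogeneous (a de Rham factor of the homogeneous product $\tilde M$), \cite[Proposition~3.7]{KennardWylie} forces $\tilde f|_{\tilde N}$ constant and $\Ric_{\tilde N}\ge 0$; so $\tilde M=\mathbb R^k\times\tilde N$ with $\Ric\ge 0$. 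Now $\Gamma=\pi_1(M)$ acts by isometries preserving the de Rham splitting ($\tilde N$, compact and simply connected, has no Euclidean factor), so $\Gamma\hookrightarrow\mathrm{Isom}(\mathbb R^k)\times\mathrm{Isom}(\tilde N)$ with $\mathrm{Isom}(\tilde N)$ compact; exactly as in Cheeger--Gromoll's proof of the corresponding topological splitting — the argument already invoked for Theorem~\ref{Thm:FundGroup} — the image of $\Gamma$ in $\mathrm{Isom}(\mathbb R^k)$ is a crystallographic group and $M$ is exhibited as a flat bundle with flat (Bieberbach) fiber over $B:=\tilde N/(\text{image of }\Gamma)$, a compact locally homogeneous space with $\Ric_B=\Ric_{\tilde N}\ge 0$. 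Finally, writing $\tilde N=\tilde G/\tilde H$ with $\tilde G=\mathrm{Isom}(\tilde N,g_{\tilde N})$ compact, the metric $g^\star_{\tilde N}$ induced on $\tilde N$ via the Riemannian submersion $\tilde G^0\to\tilde N$ from an $\mathrm{Ad}(\tilde G)$‑invariant bi‑invariant metric on $\tilde G^0$ has nonnegative sectional curvature by O'Neill and is $\tilde G$‑invariant, so $g_{\mathbb R^k}\oplus g^\star_{\tilde N}$ is a homogeneous metric on $\tilde M$ invariant under $\mathrm{Isom}(\mathbb R^k)\times\tilde G\supseteq\Gamma$, which descends to an invariant metric on $M$ of nonnegative sectional curvature. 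The one genuinely new difficulty is the second paragraph — that $\mathrm{CD}(0,1)$ with bounded potential is incompatible with a non‑trivial homogeneous warping; the $\pi_1$‑analysis is the classical Cheeger--Gromoll/Bieberbach argument and the last construction is the standard normal‑homogeneous submersion trick.
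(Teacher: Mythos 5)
Your proposal is correct, and it differs from the paper's proof in the one step that genuinely matters: showing the warping function is trivial. The paper argues group-theoretically: isometries of the warped product $\tilde M = \mathbb{R}\times L$ split as $F_1\times F_2$ with $F_1\in\mathrm{Isom}(\mathbb{R})$ and $\phi\circ F_1=\phi$, and transitivity of $\mathrm{Isom}(\tilde M)$ forces the collection of $F_1$'s to act transitively on $\mathbb{R}$, so $\phi$ is constant. You instead argue curvature-theoretically: homogeneity (or even local homogeneity) forces the Ricci spectrum to be point-independent, and since $\partial_r$ is a Ricci eigenvector whose eigenvalue $-(n-1)u''/u$ varies continuously with $r$ and takes values in a finite set, it is a constant $\mu$; then $\mu<0$ contradicts $u>0$, $\mu=0$ gives $u$ affine and hence constant by positivity, and $\mu>0$ is ruled out. (For $\mu>0$ your $CD(0,1)$/convexity argument on $w=e^{\tilde f/(n-1)}$ works, but you could shortcut it: $u=e^{\phi/(n-1)}$ is bounded since $\tilde f$ is, while a strictly convex positive function on $\mathbb{R}$ is unbounded.) Both approaches buy the same conclusion; yours avoids the splitting-of-isometries discussion and instead leans on the Ricci eigenvalue analysis, while the paper's group argument is shorter given that the isometry-splitting facts were already established for the proof of Theorem~\ref{Thm:FundGroup}. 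The remainder of your argument — applying \cite[Proposition~3.7]{KennardWylie} to the compact homogeneous factor and then invoking the classical Cheeger--Gromoll/Bieberbach structure — is essentially the same as the paper's, though you spell out the flat-bundle and normal-homogeneous metric construction where the paper simply cites \cite[Theorem~5]{CheegerGromoll}.
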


Although Milman \cite{Milman1} has obtained information about spaces which are $CD(0, N)$  with  $N < 1$, Theorem \ref{Theorem:Splitting} appears novel as it seems to be the first result in the literature for the case $N=1$. 
The main new ingredient of the proof of our splitting theorem is a new Bochner type formula  which we use to obtain a new Laplacian comparison theorem for a  $CD(0 ,1)$ space.  Our Bochner formula generalizes the classical one for Ricci curvature in a different way than the Bochner formulas of Lichnerowicz \cite{Lichnerowicz1,  Lichnerowicz2} Bakry-Emery \cite{BE, Bakry}, and Ohta \cite{Ohta} for $CD(0, N)$  in the $N=\infty$, $N>n$, and $N<0$ cases respectively.  Under the $CD(0, 1)$ assumption, our formula only applies to distance functions, but it  gives a philosophical connection between Bakry-Emery's definition of curvature dimension and the results in \cite{Milman1, KolesnikovMilman}.   Further applications of the Bochner formula are developed in \cite{WylieYeroshkin}. 

One way in which to summarize our results is to say that  $N=1$ is a critical parameter  for the splitting theorem where the isometric  splitting theorem fails but  is replaced by the weaker warped product splitting.   A natural question is whether  warped product splitting holds for $N \in (1,n)$.   Our methods do not seem to say anything in this case.  

After the completion of this paper, some similar rigidity theorems for Bakry-Emery Ricci tensors for Lorentzian manifolds have also been established in \cite{WoolgarWylie}.  The same limited loss of rigidity from an isometric product to a warped product is also found in that case. 

We would also like to point out that  the intuition that led us to consider that $N=1$ might be a critical dimension, came from recent work of the author that defines a notion of sectional curvature for manifolds with density \cite{Wylie}.  In that work, we develop a notion of weighted sectional curvature,  which we called $\overline{\sec}_f$.  It comes up  from considering modifying the radial curvature equation applied to  Jacobi fields.  The average of  curvatures $\overline{\sec}_f$ over an orthonormal basis is  $\mathrm{Ric}_f^{1}$ in the same way that the sectional curvatures average to the Ricci curvature.  Some of the examples in the next section arise in \cite{KennardWylie} in the context of studying weighted sectional curvatures and our new Bochner formula can be derived from tracing some of the equations in \cite{Wylie}. 

%The twisted products in theorem are natural spaces from the perspective of manifolds with density.  To see how recall that if $h$ is a function such that $L_{\nabla h} g = 2 \mathrm{Hess} h = 0$, then the space splits as a product $M = N \times \mathbb{R}$ and $h$ is a linear function in the $\mathbb{R}$ direction.  A similar characterization holds for the spaces in Theorem  as   they are the spaces supporting a function $h$ such that $L_{\nabla h} \left(e^{-2f} g\right)  = 0$. 

\medskip
\noindent\emph{Acknowledgements.}  This work was supported by a grant from the Simons Foundation (\#355608, William Wylie). We also thank the referee for a careful reading of the manuscript.

\section {Twisted and Warped Products over $\mathbb{R}$}

In this section we discuss the examples that arise in our splitting theorem and also show that Lichneorwicz splitting theorem does not hold for $CD(0,1)$.  We have to initially consider spaces which are slightly more general than a warped product.    Let $(L,h_L)$ be an $(n-1)$-dimensional Riemannian manifold,  let $M = \mathbb{R} \times L$, and let $\psi(r,p)$ be an arbitrary real valued function on $M$.  A  \emph{twisted product}  metric  over $\mathbb{R}$  is a metric $g_M$ of the form 
\[ g_M = dr^2 + e^{\frac{2\psi}{n-1}} h_L \qquad r\in (-\infty, \infty).\]
Twisted products always contain a line given by the geodesic $\gamma(t) = (t ,x_0)$, where $x_0$ is a fixed point in $L$.    If $\psi$ is a function of $r$ only, then the metric $g_M$ is called a  \emph{warped product}.  

 The connection and Ricci tensor of a twisted product, can be found for example as a special case of the equations in, \cite{FGKU}.
 
 \begin{proposition} \label{Proposition:Computation} Let $g_M = dr^2 + e^{\frac{2\psi}{n-1}} h_L$ and let $U$ and $V$ be vector fields on $L$. Then the Riemannian connection of $g_M$ is given by 
\begin{eqnarray*}
\nabla_{ \frac{\partial}{\partial r}} V &=& \frac{1}{(n-1)}\frac{\partial \psi}{\partial r} V \\
\nabla_{U} V &=& \nabla_U^L V + \frac{1}{n-1}\left( D_U(\psi) V + D_V(\psi) U - g_M(U,V) \nabla \psi \right). 
\end{eqnarray*}
The Ricci tensor is given by 
\begin{eqnarray*}
\mathrm{Ric}\left( \frac{\partial}{\partial r}, \frac{\partial}{\partial r} \right) &=& - \frac{\partial^2 \psi}{\partial r^2} - \frac{1}{n-1} \left(  \frac{\partial \psi}{\partial r} \right)^2\\
\mathrm{Ric}\left( \frac{\partial}{\partial r}, V \right)&=& \frac{(2-n)}{n-1} D_V D_{\frac{\partial}{\partial r}} \psi\\
\mathrm{Ric}\left( U, V \right)&=& \mathrm{Ric}_{h_L}\left( U,V\right) + \left( \frac{1}{n-1}\right) \mathrm{Hess}\psi \left(U,V\right) + D_UD_V(\psi) - D_{\nabla^L_U V}(\psi) \\
&& + \frac{1}{n-1}D_U(\psi) D_V(\psi) -  \frac{1}{n-1}\left( \Delta \psi + \frac{|\nabla \psi|^2}{n-1} \right)g_M(U,V). \\
\end{eqnarray*}
\end{proposition}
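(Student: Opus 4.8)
The plan is to compute the Levi-Civita connection of $g_M = dr^2 + e^{2\psi/(n-1)}h_L$ directly from the Koszul formula, then feed the connection into the curvature tensor and trace. Throughout I write $\varphi = e^{2\psi/(n-1)}$ for the conformal factor on the $L$-fibers, so that $g_M(U,V) = \varphi\, h_L(U,V)$ for vector fields $U,V$ tangent to $L$, while $\partial/\partial r$ is a unit field orthogonal to $L$ with $[\partial/\partial r, U] = 0$ when $U$ is lifted from $L$. Since $\partial/\partial r$ is the gradient of the coordinate function $r$ and $|\partial/\partial r|\equiv 1$, we have $\nabla_{\partial/\partial r}\partial/\partial r = 0$ automatically. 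For the remaining Christoffel-type terms I would apply Koszul: $2g_M(\nabla_X Y, Z) = X g_M(Y,Z) + Y g_M(X,Z) - Z g_M(X,Y) + g_M([X,Y],Z) - g_M([X,Z],Y) - g_M([Y,Z],X)$, using that $\partial_r$-derivatives of $h_L$ vanish and that $\partial_r \varphi = \tfrac{2}{n-1}(\partial_r\psi)\varphi$. This yields $\nabla_{\partial/\partial r}V = \tfrac{1}{2\varphi}(\partial_r\varphi)V = \tfrac{1}{n-1}(\partial_r\psi)V$, and the $\nabla_U V$ formula, where the $\tfrac{1}{n-1}$-terms are exactly the standard conformal-change correction $\tfrac12 g^{-1}(d\log\varphi)$-contributions, i.e. $D_U(\log\varphi^{1/2})V + D_V(\log\varphi^{1/2})U - g_M(U,V)\nabla(\log\varphi^{1/2})$ with $\log\varphi^{1/2} = \psi/(n-1)$. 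I should be careful that $\nabla_U^L V$ and $\nabla\psi$ are understood consistently (the gradient of $\psi$ is with respect to $g_M$, so its $L$-component carries a $\varphi^{-1}$ relative to the $h_L$-gradient); tracking this bookkeeping is the first place errors creep in.

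Next I would assemble the Ricci tensor from $\mathrm{Ric}(X,Y) = \sum_i g_M(R(E_i,X)Y, E_i)$ for a $g_M$-orthonormal frame $E_0 = \partial/\partial r$, $E_1,\dots,E_{n-1}$ with $E_a$ tangent to $L$. For $\mathrm{Ric}(\partial/\partial r,\partial/\partial r)$ only the mixed planes contribute, and using $\nabla_{E_a}\partial/\partial r = \tfrac{1}{n-1}(\partial_r\psi)E_a$ (from the first connection formula and metric compatibility) one gets a Riccati-type identity whose trace is $-(n-1)\big[\tfrac{1}{n-1}\partial_r^2\psi + \tfrac{1}{(n-1)^2}(\partial_r\psi)^2\big] \cdot$, wait — more precisely each of the $n-1$ radial sectional curvatures equals $-\tfrac{1}{n-1}\partial_r^2\psi - \tfrac{1}{(n-1)^2}(\partial_r\psi)^2$ after dividing out correctly, and summing gives the stated $-\partial_r^2\psi - \tfrac{1}{n-1}(\partial_r\psi)^2$; I will want to double-check the normalization by specializing to a genuine warped product $e^{2\psi/(n-1)} = u^2$, where the answer must reduce to $-(n-1)u''/u$. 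The mixed term $\mathrm{Ric}(\partial/\partial r, V)$ picks up the failure of $\psi$ to be $r$-separated and produces the $\tfrac{2-n}{n-1}D_V D_{\partial/\partial r}\psi$ term; the tangential block $\mathrm{Ric}(U,V)$ splits into the intrinsic $\mathrm{Ric}_{h_L}(U,V)$, the conformal-change Hessian terms in $\psi$ on $L$ (this is where $D_U D_V\psi - D_{\nabla^L_U V}\psi$, i.e. the $h_L$-Hessian of $\psi$, together with the $\tfrac{1}{n-1}D_U\psi D_V\psi$ appears), and the radial contribution proportional to $g_M(U,V)$ coming from the $\nabla_{\partial/\partial r}V$ term, which supplies the $-\tfrac{1}{n-1}(\Delta\psi + \tfrac{|\nabla\psi|^2}{n-1})g_M(U,V)$ piece after combining the $r$-derivative and the $L$-Laplacian contributions into the full $g_M$-Laplacian $\Delta\psi$.

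The main obstacle is purely organizational rather than conceptual: keeping the distinction between $h_L$-geometry and $g_M$-geometry straight — gradients, Laplacians, Hessians, and traces all differ by powers of the conformal factor $e^{2\psi/(n-1)}$, and the cross terms between the warping in the $r$-direction and the twisting in the $L$-directions must be combined so that $\partial_r^2\psi$, $\Delta_{h_L}\psi$, and $|\nabla\psi|^2$ reassemble into the clean $g_M$-quantities appearing in the statement. A clean way to avoid most of this is to observe that $g_M$ is, fiberwise, a conformal deformation of the product $dr^2 + h_L$, apply the standard formulas for the behavior of the connection and Ricci tensor under a conformal change $g \mapsto e^{2\phi}g$ with $\phi = \psi/(n-1)$ (being careful that here $\phi$ is not a global conformal factor since $dr^2$ is unchanged — so one applies the conformal formula only to the $L$-block and handles the $r$-block by hand, or equivalently treats it as a doubly-warped/twisted product as in \cite{FGKU}), and then simplify. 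Either route is a finite computation; I would present the Koszul-formula derivation of the connection in full and then state that the Ricci formulas follow by a direct but lengthy computation, citing \cite{FGKU} for the general twisted-product curvature identities of which these are the $\mathbb{R}$-base special case.
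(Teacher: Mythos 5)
The paper does not actually prove this proposition; it is stated with the remark that the formulas ``can be found for example as a special case of the equations in \cite{FGKU}'', and that citation is the whole of the argument. Your proposal is therefore more ambitious than what the paper does: you sketch a direct Koszul-formula derivation of the connection and a frame-by-frame assembly of the Ricci tensor, and only mention the citation route at the end. The outline is sound. The connection formulas are exactly the conformal-change corrections with conformal exponent $\psi/(n-1)$ restricted to the $L$-block (with the $r$-block untouched, as you note), and your sanity check reducing to $-(n-1)u''/u$ for a genuine warped product with $u = e^{\psi/(n-1)}$ is the right one and confirms the normalization of $\mathrm{Ric}(\partial_r,\partial_r)$.

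The one place where your sketch stops short of being a proof is precisely the part you flag as an ``organizational'' difficulty: the tangential block $\mathrm{Ric}(U,V)$. You correctly identify the three contributions (the intrinsic $\mathrm{Ric}_{h_L}$, the $h_L$-Hessian terms $D_UD_V\psi - D_{\nabla_U^L V}\psi$ with the $\frac{1}{n-1}D_U\psi\, D_V\psi$ correction, and the $g_M(U,V)$-proportional piece), but you defer the actual recombination into the displayed $g_M$-quantities $\Delta\psi$ and $|\nabla\psi|^2$ to ``a direct but lengthy computation''. That recombination is exactly where the $\varphi$-powers relating $h_L$-trace to $g_M$-trace and the radial $\partial_r^2\psi$ contribution must be tracked; a careless pass here can drop the cross term that produces $\frac{1}{n-1}\mathrm{Hess}\,\psi(U,V)$ (a full $g_M$-Hessian, not an $h_L$-one) as a separate summand alongside the $h_L$-Hessian $D_UD_V\psi - D_{\nabla_U^L V}\psi$. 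If you intend to go the direct route rather than simply citing \cite{FGKU}, you should carry out this last step explicitly rather than asserting it; otherwise, citing \cite{FGKU} as the paper does is entirely adequate.
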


A natural choice for the potential function $f$ is $f = \psi$,  since then it follows from the equations above that $\mathrm{Ric}^1_{f} \left( \frac{\partial}{\partial r}, \frac{\partial}{\partial r} \right)  = 0$.   On the other hand,  we can also  show that if the potential  $f = \psi$ is  $CD(0,1)$  then  the metric can be written as a warped product.  %We  also note we have   the equation, $L_{\nabla r}(e^{\frac{-2\psi}{n-1}} g_M) = 0$ on any twisted product.  When $\psi$ is constant, this equation becomes $\mathrm{Hess} r = \frac{1}{2} L_{\nabla r} g = 0$, which is a well known characterization of an isometric product.   The equation for $\psi$ non-constant will be key in the proof of our splitting theorem. 

 \begin{proposition} \label{Prop:WarpedSplittingGradient} Suppose that $(M,g_M,f)$ is $CD(0,1)$ with the metric of the form  $g_M = dr^2 + e^{\frac{2f}{n-1}} h_L$, then $f = \phi(r) + f_L(x)$, where $\phi: \mathbb{R} \rightarrow \mathbb{R}$ and $f_L: L \rightarrow \mathbb{R}$.  In particular,  the metric $g_M$ is a warped product of the form $g_M = dr^2 + e^{\frac{2\phi(r)}{n-1}}g_{L}$ where $g_L = e^{\frac{2 f_L(x)}{n-1}}h_L$. 
\end{proposition}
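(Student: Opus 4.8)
The plan is to exploit the fact that the $\left(\frac{\partial}{\partial r},\frac{\partial}{\partial r}\right)$-component of $\mathrm{Ric}_f^1$ vanishes identically, so that the $CD(0,1)$ inequality $\mathrm{Ric}_f^1\geq 0$ forces $\frac{\partial}{\partial r}$ to lie in the kernel of the positive semidefinite symmetric form $\mathrm{Ric}_f^1$. This in turn kills every mixed component $\mathrm{Ric}_f^1\!\left(\frac{\partial}{\partial r},V\right)$ with $V$ tangent to $L$, and that mixed component turns out to be a nonzero multiple of $V\!\left(\frac{\partial f}{\partial r}\right)$, so $\frac{\partial f}{\partial r}$ must be constant on each slice $\{r\}\times L$ and hence a function of $r$ alone.

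First I would substitute $\psi = f$ into Proposition \ref{Proposition:Computation} and recall that $\mathrm{Ric}_f^1 = \mathrm{Ric} + \mathrm{Hess}f + \frac{df\otimes df}{n-1}$ (the $N=1$ case of the definition). Since $\frac{\partial}{\partial r}$ is a unit-speed geodesic, $\mathrm{Hess}f\!\left(\frac{\partial}{\partial r},\frac{\partial}{\partial r}\right) = \frac{\partial^2 f}{\partial r^2}$, and combining this with the formulas for $\mathrm{Ric}\!\left(\frac{\partial}{\partial r},\frac{\partial}{\partial r}\right)$ and $\frac{df\otimes df}{n-1}\!\left(\frac{\partial}{\partial r},\frac{\partial}{\partial r}\right) = \frac{1}{n-1}\left(\frac{\partial f}{\partial r}\right)^2$ shows $\mathrm{Ric}_f^1\!\left(\frac{\partial}{\partial r},\frac{\partial}{\partial r}\right)=0$, as already noted in the text. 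Next comes the elementary linear-algebra step: a positive semidefinite symmetric bilinear form $B$ with $B(e,e)=0$ satisfies $B(e,\cdot)\equiv 0$, since $B(e+tv,e+tv) = 2tB(e,v)+t^2B(v,v)\geq 0$ for all $t$ forces the linear term to vanish. Applying this with $B = \mathrm{Ric}_f^1$ and $e = \frac{\partial}{\partial r}$ gives $\mathrm{Ric}_f^1\!\left(\frac{\partial}{\partial r},V\right)=0$ for all $V$ tangent to $L$.

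Then I would compute the mixed component directly. Extending $V$ to $M$ so as to be $r$-independent and using the connection formula $\nabla_{\frac{\partial}{\partial r}}V = \frac{1}{n-1}\frac{\partial f}{\partial r}V$, one finds $\mathrm{Hess}f\!\left(\frac{\partial}{\partial r},V\right) = V\!\left(\frac{\partial f}{\partial r}\right) - \frac{1}{n-1}\frac{\partial f}{\partial r}(Vf)$; combining with $\mathrm{Ric}\!\left(\frac{\partial}{\partial r},V\right) = \frac{2-n}{n-1}V\!\left(\frac{\partial f}{\partial r}\right)$ and $\frac{df\otimes df}{n-1}\!\left(\frac{\partial}{\partial r},V\right) = \frac{1}{n-1}\frac{\partial f}{\partial r}(Vf)$, the two gradient-squared terms cancel and the remainder sums to $\frac{1}{n-1}V\!\left(\frac{\partial f}{\partial r}\right)$. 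Hence $V\!\left(\frac{\partial f}{\partial r}\right)=0$ for every $V$ tangent to $L$, so $\frac{\partial f}{\partial r}$ is constant along each slice $\{r\}\times L$ and therefore depends on $r$ only.

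Finally, set $f_L(x) := f(0,x)$ and $\phi(r) := f(r,x) - f(0,x)$; the latter is independent of $x$ precisely because $\frac{\partial f}{\partial r}$ does not depend on $x$, and then $f = \phi(r) + f_L(x)$. Consequently $e^{2f/(n-1)} = e^{2\phi(r)/(n-1)}e^{2f_L(x)/(n-1)}$, so putting $g_L := e^{2f_L/(n-1)}h_L$ gives $g_M = dr^2 + e^{2\phi(r)/(n-1)}g_L$, a warped product over $\mathbb{R}$. I do not anticipate any real obstacle; the only care needed is in the two Hessian computations — keeping track of the Christoffel corrections supplied by the connection formulas of Proposition \ref{Proposition:Computation} — and in noticing the cancellation of the $\frac{\partial f}{\partial r}(Vf)$ terms against those coming from $\frac{df\otimes df}{n-1}$.
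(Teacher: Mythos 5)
Your proof is correct and follows essentially the same path as the paper's: observe $\mathrm{Ric}_f^1(\partial_r,\partial_r)=0$, use nonnegativity of $\mathrm{Ric}_f^1$ to kill the mixed component, compute $\mathrm{Hess}\,f(\partial_r,V)$ via the twisted-product connection, and note that the $\frac{1}{n-1}\frac{\partial f}{\partial r}(Vf)$ terms cancel, leaving $\frac{1}{n-1}V\!\left(\frac{\partial f}{\partial r}\right)=0$. The only cosmetic difference is at the very end: the paper cites \cite[Theorem~1]{FGKU} to pass from $\partial_r\partial_{y^k}f=0$ to the splitting $f=\phi(r)+f_L(x)$, whereas you deduce it directly by integrating in $r$ and setting $f_L(x)=f(0,x)$, which is a perfectly good elementary replacement (assuming, as the paper implicitly does, that $L$ is connected).
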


\begin{proof}

Since $\mathrm{Ric}^1_f \left( \frac{\partial}{\partial r}, \frac{\partial}{\partial r} \right)  = 0$  and  $\mathrm{Ric}^1_f \geq 0$ we must have that that $\mathrm{Ric}^1_f \left( \frac{\partial}{\partial r}, V \right) = 0$ for all $V \perp \frac{\partial}{\partial r}$. 

Fix a point $x$ in $L$, let $\frac{\partial}{\partial y^i}$, $i=1, \dots, n-1$ be an orthonormal basis of local coordinates around $x$ in the $h_L$ metric.  Write $\nabla f = a(r,y) \frac{\partial}{\partial r} +  b_i(r,y) \frac{\partial}{\partial y^i} $, then $a= \frac{\partial f}{\partial r}$ and $b_i   = e^{\frac{-2f}{n-1}} \frac{\partial f}{\partial y^i}$.
%\begin{eqnarray*}
%D_k(f) &=& g(\nabla f, \frac{\partial}{\partial y^k}) \\
%&=&  b_i g\left( \frac{\partial}{\partial y^i}, \frac{\partial}{\partial y^k} \right) \\
%&=& e^{2f/(n-1)} b_k 
%\end{eqnarray*}
%So $\nabla f = \frac{\partial f}{\partial r}  \frac{\partial}{\partial r} + e^{-2f/(n-1)} D_i f  \frac{\partial}{\partial y^i} $

 Then we have 
\begin{eqnarray*}
\mathrm{Hess} f\left(\frac{\partial}{\partial r}, \frac{\partial}{\partial y^k} \right ) &=& g_M\left( \nabla_{\frac{\partial}{\partial r}} \nabla f , \frac{\partial}{\partial y^k} \right) \\
&=& \sum_{i=1}^{n-1} g_M\left( \nabla_{\frac{\partial}{\partial r}}\left( e^{\frac{-2f}{n-1}} \frac{\partial f}{\partial y^i} \frac{\partial}{\partial y^i} \right),\frac{\partial}{\partial y^k}\right) \\
&=& \frac{\partial }{\partial r } \left(e^{\frac{-2f}{n-1}} \frac{\partial f}{\partial y^k}  \right) e^{\frac{2f}{n-1}} + \frac{1}{n-1} \frac{\partial f}{\partial y^k}  \frac{\partial f}{\partial r} \\
&=&  \frac{\partial }{\partial r } \frac{\partial }{\partial y^k} (f)  - \frac{1}{n-1} \frac{\partial f}{\partial y^k}  \frac{\partial f}{\partial r}.
\end{eqnarray*}

This combined with Proposition \ref{Proposition:Computation} implies that
\begin{eqnarray*}
0 &=& \mathrm{Ric}_f^{1}\left(\frac{\partial}{\partial r},\frac{\partial}{\partial y^k} \right)\\
&=& \frac{2-n}{n-1}\frac{\partial }{\partial y^k }  \frac{\partial }{\partial r}   ( f)
 +  \frac{\partial }{\partial r } \frac{\partial }{\partial y^k} (f)  - \frac{1}{n-1} \frac{\partial f}{\partial y^k}  \frac{\partial f}{\partial r} +  \frac{1}{n-1} \frac{\partial f}{\partial y^k}  \frac{\partial f}{\partial r} \\
 & =& \left(1 + \frac{2-n}{n-1} \right) \frac{\partial }{\partial r}  \frac{\partial }{\partial y^k } ( f)  \\
 &=&  \frac{1}{n-1} \frac{\partial }{\partial r}  \frac{\partial }{\partial y^k } ( f) = \frac{1}{n-1}\frac{\partial }{\partial y^k }  \frac{\partial }{\partial r}   ( f)
\end{eqnarray*}
This implies that $\frac{\partial f}{\partial r}$ is constant in directions tangent to $L$ and $\frac{\partial f}{\partial y^k}$ is constant in the $r$ direction.  As in \cite[Theorem 1]{FGKU} this implies that $f = \phi(r) + f_L$ where $f_L:L \rightarrow \mathbb{R}$.  Then 
\begin{eqnarray*}
g_M = dr^2 + e^{\frac{2f}{n-1}}h_L =   dr^2 + e^{\frac{2\phi(r)}{n-1}}\left( e^{\frac{2 f_L(x)}{n-1}} h_L\right)
\end{eqnarray*}
which gives the result. 

\end{proof}

The  triples $(M,g,f)$  of the form given by the conclusion of  Proposition \ref{Prop:WarpedSplittingGradient} are exactly the spaces that arise in our splitting theorem.  To aid our exposition we will call these triples \emph{split spaces}.  That is $(M,g_M,f)$ is a split space if $M$ is diffeomorphic to $\mathbb{R} \times L$, $f = \phi(r) + f_L(x)$, where $\phi: \mathbb{R} \rightarrow \mathbb{R}$ and $f_L: L \rightarrow \mathbb{R}$, and $g_M = dr^2 + e^{\frac{2\phi(r)}{n-1}}g_{L}$ for a fixed metric $g_L$ on $L$.   From the calculations above we have that $\mathrm{Ric}_f^1 \left( \frac{\partial}{\partial r}, \cdot \right) = 0$ for any split space. 

 In order for a split space to be $CD(0,1)$ we need an additional curvature assumption for the triple $(L, g_L, f_L)$.  In considering what this condition should be,  note that it is not true that the isometric product of spaces which are $CD(0, 1)$ are $CD(0,1)$.  This is because the definition of the curvature dimension condition depends on the dimension of the manifold.  In fact,  the product metric $L^{n-k} \times \mathbb{R}^k$ with a  potential function $f$ defined on $L$ admits $CD(0,1)$ if and only if  $(L^{n-k}, g_L, f)$ is  $CD(0, 1-k)$.    This motivates the following proposition. 

\begin{proposition} \label{Proposition:GradientExample}
A split space $(M,g_M, f)$  is $CD(0,1)$ if and only if 
\[  (\mathrm{Ric}_{g_L})^{0}_{f_L}  \geq \sup_r  \left( \frac{1}{n-1} \frac{\partial^2 \phi} {\partial r^2}  e^{\frac{2 \phi}{n-1}} \right) g_L \]
In particular,  if $(M,g_M, f)$ is $CD(0,1)$ then $(L, g_L, f_L)$ is $CD(0,0)$.
\end{proposition}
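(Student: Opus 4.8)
The plan is to reduce everything to an ODE computation in the warping function. Write $\varphi(r) = e^{\phi(r)/(n-1)}$, so that $\varphi$ is a positive smooth function on $\mathbb{R}$ and the split space metric is the genuine warped product $g_M = dr^2 + \varphi^2 g_L$ over $\mathbb{R}$ with fibre $(L,g_L)$. We already know that $\mathrm{Ric}_f^1(\frac{\partial}{\partial r},\cdot) = 0$ on a split space, and a symmetric $2$-tensor that annihilates $\frac{\partial}{\partial r}$ is nonnegative precisely when its restriction to $(\frac{\partial}{\partial r})^\perp$ is; hence $CD(0,1)$ is equivalent to $\mathrm{Ric}_f^1(U,V)\geq 0$ for all $U,V$ tangent to the $L$-slices. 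So the proposition comes down to evaluating this restriction.

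For that I would apply Proposition~\ref{Proposition:Computation} with $h_L$ replaced by $g_L$ and with the purely radial warping $\psi=\phi(r)$; since $\phi$ depends on $r$ only, all terms in the Ricci formula involving $L$-derivatives of $\psi$ drop out and one is left with
\[
\mathrm{Ric}(U,V) = \mathrm{Ric}_{g_L}(U,V) - \Bigl(\tfrac{\phi''}{n-1} + \tfrac{(\phi')^2}{n-1}\Bigr) g_M(U,V).
\]
Using $f = \phi(r) + f_L(x)$ together with the connection formulas of Proposition~\ref{Proposition:Computation}, for $U,V$ tangent to $L$ one gets
\[
\mathrm{Hess} f(U,V) = \mathrm{Hess}^{g_L} f_L(U,V) + \tfrac{(\phi')^2}{n-1}\, g_M(U,V),\qquad (df\otimes df)(U,V) = (df_L\otimes df_L)(U,V).
\]
Assembling $\mathrm{Ric}_f^1 = \mathrm{Ric} + \mathrm{Hess} f - \tfrac{1}{1-n}\,df\otimes df$, the $(\phi')^2$-terms cancel in pairs, and recognizing $\mathrm{Ric}_{g_L} + \mathrm{Hess}^{g_L} f_L - \tfrac{1}{1-n}\,df_L\otimes df_L = (\mathrm{Ric}_{g_L})^0_{f_L}$ (recall $\dim L = n-1$) together with $g_M|_L = e^{2\phi/(n-1)}g_L$, I expect to arrive at the clean identity
\[
\mathrm{Ric}_f^1(U,V) = (\mathrm{Ric}_{g_L})^0_{f_L}(U,V) - \tfrac{1}{n-1}\,\phi''(r)\, e^{2\phi(r)/(n-1)}\, g_L(U,V).
\]
Verifying the two Hessian identities and the cancellation is the main bookkeeping step, but it is routine once the warped-product set-up is fixed; there is no conceptual obstacle here. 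Given the identity, the first claim is immediate: $CD(0,1)$ holds iff $(\mathrm{Ric}_{g_L})^0_{f_L} \geq \tfrac{1}{n-1}\phi''(r) e^{2\phi(r)/(n-1)} g_L$ for every $r$, and since the left-hand tensor is independent of $r$ this is exactly $(\mathrm{Ric}_{g_L})^0_{f_L} \geq \sup_r\bigl(\tfrac{1}{n-1}\phi''(r) e^{2\phi(r)/(n-1)}\bigr) g_L$.

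For the ``in particular'' it remains to see that this supremum is nonnegative. A short calculation gives $\tfrac{1}{n-1}\phi'' e^{2\phi/(n-1)} = \varphi\varphi'' - (\varphi')^2$, so I must establish the elementary lemma: for any positive $C^2$ function $\varphi$ on $\mathbb{R}$ one has $\sup_{\mathbb{R}}\bigl(\varphi\varphi'' - (\varphi')^2\bigr) \geq 0$. I would prove this by contradiction. If $\varphi\varphi'' - (\varphi')^2 \leq -\epsilon < 0$ everywhere, then $h := 1/\varphi > 0$ satisfies $h'' = \varphi^{-3}\bigl((\varphi')^2 - (\varphi\varphi'' - (\varphi')^2)\bigr) \geq \epsilon h^3$ on all of $\mathbb{R}$; in particular $h$ is strictly convex, hence monotone near one end and unbounded there, and multiplying $h'' \geq \epsilon h^3$ by $2h'$ (of constant sign on that end) and integrating yields $(h')^2 \geq c\,h^4$ for $r$ large, so that $(1/h)'$ is bounded away from $0$ with a fixed sign — forcing $1/h$, and therefore $\varphi^{-1}$, to vanish at a finite parameter, contradicting $\varphi > 0$ on all of $\mathbb{R}$. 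Hence $(\mathrm{Ric}_{g_L})^0_{f_L}\geq 0$, i.e. $(L,g_L,f_L)$ is $CD(0,0)$. This small ODE lemma is the only genuinely non-computational ingredient of the argument; everything else is the substitution described above.
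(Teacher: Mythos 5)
Your proof matches the paper's essentially step for step: after observing that $\mathrm{Ric}_f^1$ annihilates $\frac{\partial}{\partial r}$ so that only $L$-tangent directions matter, you derive exactly the identity the paper uses,
\[
\mathrm{Ric}_f^1(U,V) = (\mathrm{Ric}_{g_L})^0_{f_L}(U,V) - \tfrac{1}{n-1}\phi''\, e^{2\phi/(n-1)}\, g_L(U,V) \qquad (U,V\perp \tfrac{\partial}{\partial r}),
\]
and the biconditional with the supremum is read off from it in the same way. The only place you deviate is in the ODE lemma for the ``in particular'' clause: the paper sets $y=\phi/(n-1)$, obtains $y''\leq -ae^{-2y}$, and appeals to the explicit solvability of $v''=-ae^{-2v}$ to get finite-time blow-down, whereas you set $\varphi=e^{\phi/(n-1)}$, pass to $h=1/\varphi$, and run the elementary comparison $h''\geq\varepsilon h^3 \Rightarrow$ finite-time blow-up. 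These are the same computation in different variables ($h=e^{-y}$), with your version slightly more self-contained since it avoids appealing to an unwritten explicit solution. One tiny slip: near the end you write ``$\varphi^{-1}$'' where you mean $1/h=\varphi$ itself; this is inconsequential, as the point is simply that $h$ (being finite everywhere since $\varphi>0$ is smooth on all of $\mathbb{R}$) cannot satisfy $h''\geq\varepsilon h^3$ globally.
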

\begin{proof}
From Proposition  \ref{Prop:WarpedSplittingGradient} we already see that $\mathrm{Ric}^1_f \left( \frac{\partial}{\partial r}, Y \right)  = 0$ for all vector fields $Y$, so we just need to consider $\mathrm{Ric}^1_f \left( U,V \right) $ for $U,V \perp  \frac{\partial}{\partial r}$.  From Proposition \ref{Proposition:Computation} we have that 
\begin{eqnarray*}
\mathrm{Hess} \phi(U,V) &=& \frac{1}{n-1} \left( \frac{\partial \phi}{\partial r}\right)^2 g_M(U,V) \\
\Delta \phi &=&  \frac{\partial^2 \phi} {\partial r^2} + \left( \frac{\partial \phi}{\partial r}\right)^2 
\end{eqnarray*}
and thus 
\begin{eqnarray*}
\mathrm{Ric}\left( U, V \right)&=& \mathrm{Ric}_{g_L}\left( U,V\right) - \frac{1}{n-1}\left(  \frac{\partial^2 \phi} {\partial r^2} + \left( \frac{\partial \phi}{\partial r}\right)^2\right)g_M(U,V). 
\end{eqnarray*}
Moreover, 
\[ \mathrm{Hess} f (U,V) = \mathrm{Hess} \phi (U,V) + \mathrm{Hess}^L f_L (U,V) =  \frac{1}{n-1} \left( \frac{\partial \phi}{\partial r}\right)^2 g_M(U,V) + \mathrm{Hess}^L f_L (U,V). \]
So 
\begin{eqnarray*}
\mathrm{Ric}_f^{1}(U,V) &=& \left(\mathrm{Ric}_{g_L}\right)_{f_L}^0(U,V) -  \frac{1}{n-1}\left(  \frac{\partial^2 \phi} {\partial r^2} \right) g_M(U,V).
\end{eqnarray*}
Which gives the first part of the result.  

Now suppose that $(L, g_L, f_L)$ is not $CD(0, 0)$, then there is a  constant $a>0$ such that 
 \[\frac{1}{n-1} \frac{\partial^2 \phi} {\partial r^2} \leq -a e^{\frac{-2 \phi}{n-1}}\]
 Letting $y = \phi/(n-1)$ we have $y'' \leq -a e^{-2y}$.  Solutions to this inequality can be bounded above by the appropriate solutions to the equation $v'' = -ae^{-2v}$. This equation can be solved explicitly and we can see that all solutions $v$ go to $-\infty$ in finite time, but this contradicts that $\phi$ is defined for all $r$. 
 \end{proof}

Now we can construct the examples of spaces with bounded $f$  and containing a line which do not split as products but are $CD(0,1)$.  These spaces show that Lichnerowicz's splitting theorem does not hold for the $CD(0,1)$ condition.  
  
  \begin{corollary}  Let $\phi: \mathbb{R} \rightarrow \mathbb{R}$ be a bounded $C^2$ function which has bounded first and second derivatives.  Then there exists $\lambda$ large enough such that  the metric $dt^2 + e^{\frac{2\phi}{n-1}} g_{S^n_\lambda}$ with $f = \phi$   is $CD(0,1)$ where $S^n_{\lambda}$ is the sphere of constant Ricci curvature $\lambda$. 
  \end{corollary}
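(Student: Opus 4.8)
The plan is to recognize the metric in the statement as a split space and read the claim off from Proposition~\ref{Proposition:GradientExample}. Take $L = S^n_\lambda$ with $g_L = g_{S^n_\lambda}$, set $f_L \equiv 0$, and let the given $\phi$ be the warping potential; then $f = \phi + f_L = \phi$ and the metric $dt^2 + e^{\frac{2\phi}{n-1}} g_L$ is exactly of split-space form. By Proposition~\ref{Proposition:GradientExample}, $(M, g_M, f)$ is $CD(0,1)$ if and only if
\[ (\mathrm{Ric}_{g_L})^0_{f_L} \ \geq\ \sup_r \left( \frac{1}{n-1}\, \frac{\partial^2 \phi}{\partial r^2}\, e^{\frac{2\phi}{n-1}} \right) g_L . \]

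First I would simplify the left-hand side. Since $f_L \equiv 0$, the generalized Ricci tensor $(\mathrm{Ric}_{g_L})^0_{f_L}$ is just $\mathrm{Ric}_{g_L}$, and since $S^n_\lambda$ has constant Ricci curvature $\lambda$ this equals $\lambda g_L$. So the $CD(0,1)$ condition reduces to the scalar inequality
\[ \lambda \ \geq\ \sup_{r \in \mathbb{R}} \left( \frac{1}{n-1}\, \phi''(r)\, e^{\frac{2\phi(r)}{n-1}} \right) . \]
Then I would note that the right-hand side is a finite constant $C = C(\phi, n)$: by hypothesis $\phi$ is bounded, hence $e^{\frac{2\phi}{n-1}}$ is bounded, and $\phi''$ is bounded. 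Choosing any $\lambda \geq \max\{C, 1\}$ makes the inequality hold while keeping $S^n_\lambda$ a genuine round sphere, which proves the corollary; larger $\lambda$ work a fortiori, which is the sense in which ``$\lambda$ large enough'' suffices.

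I do not expect a genuine obstacle here: once Proposition~\ref{Proposition:GradientExample} is in hand the statement is essentially immediate, the only content being the elementary bound on $\frac{1}{n-1}\phi'' e^{\frac{2\phi}{n-1}}$. It is worth recording in the write-up that only boundedness of $\phi$ and of $\phi''$ is used, the hypothesis on $\phi'$ being unnecessary for this conclusion. For context one should also observe that these examples do obstruct a Lichnerowicz-type splitting: boundedness of $\phi$ makes $g_M$ uniformly equivalent to the product metric $dt^2 + g_{S^n_\lambda}$ and hence complete, $f = \phi$ is bounded, the warped product contains the line $t \mapsto (t, x_0)$, yet for non-constant $\phi$ it is not a Riemannian product.
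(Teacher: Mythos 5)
Your proposal is correct and follows the paper's own argument: identify the metric as a split space with constant $f_L$ and apply Proposition~\ref{Proposition:GradientExample}, choosing $\lambda \geq \sup_r \frac{1}{n-1}\phi'' e^{\frac{2\phi}{n-1}}$, which is finite since $\phi$ and $\phi''$ are bounded. Your observation that the bound on $\phi'$ is not used is also accurate (the paper includes it presumably for uniformity with its other examples), and the closing remarks about why these examples obstruct a Lichnerowicz-type splitting are a correct reading of the surrounding discussion.
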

  \begin{proof}
 In the calculations above we have  $f_L= 1$.  Choose $\lambda$ such that  $ \lambda  \geq \sup_r  \left( \frac{1}{n-1} \frac{\partial^2 \phi} {\partial r^2}  e^{\frac{2 \phi}{n-1}} \right)$, then by Proposition \ref{Proposition:GradientExample}, the desired space is $CD(0,1)$.  \end{proof}

In the next section, we will show that split spaces are the only complete spaces with $f$ bounded above  which are $CD(0,1)$ and contain a line.

%  
%  
%  In particular, if we let $u$ be any bounded function with bounded first and second derivatives, we can let $\phi$ be constant and pick  $L$ to be a sphere of large enough curvature to make $\mathrm{Ric}_f^{-(n-1)} \geq 0$.  This shows that Lichnerowicz splitting theorem does not hold for $\mathrm{Ric}_f^{-(n-1)}$.   If $u$ is a periodic function, then we can also obtain a compact quotient on $S^1 \times L$ and so our space can be the universal cover of a compact manifold with $\mathrm{Ric}_f^{-(n-1)} \geq 0$. 
%  
%  Another interesting special case is when $w = u = e^t$, in which case $-u(t) u''(t) + (u'(t))^2 = 0$ and $g_L$ can be any metric of non-negative Ricci curvature.  When  $g_L$ is a flat metric then $g_M$ is a space of constant negative curvature. 
%  
%  

\section{Proof of the splitting theorem }

We now turn our attention to proving the splitting theorem.  The first component  is a Bochner formula. The usual Bochner formula for Ricci curvature is that for a $C^3$ function $h$ we have

\[ \frac{1}{2} \Delta|\nabla h|^2  = |\mathrm{Hess} h|^2 + \mathrm{Ric}(\nabla h, \nabla h) + g(\nabla h, \nabla \Delta h). \]
Using Cauchy-Schwarz on the $|\mathrm{Hess} h|^2$ term and assuming the Ricci curvature bound $\mathrm{Ric} \geq K$  gives 
\[ \frac{1}{2} \Delta   |\nabla  h|^2  \geq \frac{(\Delta h)^2}{n}  + K|\nabla h|^2 + g(\nabla h, \nabla \Delta h). \]

Now let $f$ be a function on $M$, the weighted, or $f$-Laplacian  is $\Delta_f = \Delta - D_{\nabla f}$. Then one has the following formula, \cite{Lichnerowicz1} 

\begin{eqnarray} \frac{1}{2}\Delta_f |\nabla  h|^2 =  |\mathrm{Hess} h|^2 + \mathrm{Ric}^{\infty}_f(\nabla h, \nabla h) + g(\nabla h, \nabla \Delta_f h). \label{eqn:WeightedBochner} \end{eqnarray}

For curvature dimension inequalities of generalized dimension less than $n$  we have the following Bochner type formula.

\begin{lemma} \label{Lem:Bochner} Let $(M^n,g,f)$ be a manifold with density that is $CD(K, n-m)$ for some integer $m = 1, 2, \dots n$.   Suppose that $h$ is a $C^{3}$ function in a neighborhood of a point $p$  such that $\mathrm{Hess} h|_p$ has $m$ non-zero eigenvalues.  Let $v = e^{f/m}$,  then 
\[ \frac{1}{2} v^2 \Delta_f |\nabla h|^2\ \geq  v^2 \frac{(\Delta_f h)^2}{m} + v^2 K |\nabla h|^2 + g(\nabla h, \nabla (v^2 \Delta_f h)) \]
Moreover, equality is achieved if and only if the $m$ non-zero eigenvalues of $\mathrm{Hess} h|_p$ are all equal and $\mathrm{Ric}_f^{n-m}(\nabla h, \nabla h) = K|\nabla h|^2$. 
\end{lemma}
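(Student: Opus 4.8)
The plan is to derive the inequality by combining the standard weighted Bochner formula \eqref{eqn:WeightedBochner} with a sharper trace inequality that exploits the hypothesis that $\mathrm{Hess}\, h|_p$ has only $m$ nonzero eigenvalues. Starting from \eqref{eqn:WeightedBochner}, I first rewrite $\mathrm{Ric}^\infty_f$ in terms of $\mathrm{Ric}^{n-m}_f$ using the definition
\[ \mathrm{Ric}^\infty_f = \mathrm{Ric} + \mathrm{Hess}\, f = \mathrm{Ric}^{n-m}_f + \frac{df\otimes df}{m}, \]
so that
\[ \frac{1}{2}\Delta_f|\nabla h|^2 = |\mathrm{Hess}\, h|^2 + \mathrm{Ric}^{n-m}_f(\nabla h,\nabla h) + \frac{(D_{\nabla f}h)^2}{m} + g(\nabla h, \nabla\Delta_f h). \]
Now apply the $CD(K,n-m)$ hypothesis to bound $\mathrm{Ric}^{n-m}_f(\nabla h,\nabla h)\geq K|\nabla h|^2$.

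The heart of the matter is the term $|\mathrm{Hess}\, h|^2 + \frac{1}{m}(D_{\nabla f}h)^2$, which I want to bound below by $\frac{1}{m}(\Delta_f h)^2$, where $\Delta_f h = \Delta h - D_{\nabla f}h$. Since $\mathrm{Hess}\, h|_p$ has at most $m$ nonzero eigenvalues $\mu_1,\dots,\mu_m$, we have $\Delta h = \sum_i \mu_i$ and $|\mathrm{Hess}\, h|^2 = \sum_i \mu_i^2 \geq \frac{1}{m}(\sum_i\mu_i)^2 = \frac{(\Delta h)^2}{m}$ by Cauchy--Schwarz on the $m$ relevant eigenvalues (not all $n$). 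Writing $a = \Delta h$ and $b = D_{\nabla f}h$, I then need $\frac{a^2}{m} + \frac{b^2}{m} \geq \frac{(a-b)^2}{m}$, i.e. $a^2 + b^2 \geq (a-b)^2 = a^2 - 2ab + b^2$, which is equivalent to $2ab\geq 0$ — but this is not automatic. So a naive split does not work; instead I should carry the cross term more carefully. The correct route is to absorb the $v^2$ factor: multiplying through by $v^2 = e^{2f/m}$ and using $g(\nabla h, \nabla(v^2\Delta_f h)) = v^2 g(\nabla h,\nabla\Delta_f h) + \frac{2}{m}v^2 (D_{\nabla f}h)(\Delta_f h)$, the extra term $\frac{2}{m}v^2(D_{\nabla f}h)(\Delta_f h)$ appearing on the right of the target inequality is exactly what converts $\frac{1}{m}(\Delta_f h)^2 + \frac{2}{m}(D_{\nabla f}h)(\Delta_f h)$, and combined with $\frac{1}{m}(D_{\nabla f}h)^2 = \frac{1}{m}b^2$ sitting on the left from the Bochner formula, completes the square: $\frac{1}{m}(\Delta h)^2 = \frac{1}{m}((\Delta_f h) + D_{\nabla f}h)^2 = \frac{1}{m}(\Delta_f h)^2 + \frac{2}{m}(D_{\nabla f}h)(\Delta_f h) + \frac{1}{m}(D_{\nabla f}h)^2$. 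Thus the $(\Delta h)^2/m$ bound from eigenvalue Cauchy--Schwarz is precisely equivalent, after this algebraic reorganization, to the claimed inequality with $\Delta_f$ and the $v^2$ weight.

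So the clean sequence of steps is: (1) expand $\frac{1}{2}v^2\Delta_f|\nabla h|^2$ via \eqref{eqn:WeightedBochner}; (2) rewrite $\mathrm{Ric}^\infty_f = \mathrm{Ric}^{n-m}_f + \frac{1}{m}df\otimes df$ and apply $\mathrm{Ric}^{n-m}_f(\nabla h,\nabla h)\geq K|\nabla h|^2$; (3) bound $|\mathrm{Hess}\, h|^2 \geq \frac{(\Delta h)^2}{m}$ using that only $m$ eigenvalues are nonzero; (4) expand $(\Delta h)^2 = (\Delta_f h + D_{\nabla f}h)^2$ and regroup, identifying the term $g(\nabla h, \nabla(v^2\Delta_f h)) = v^2 g(\nabla h,\nabla\Delta_f h) + \frac{2}{m}v^2(D_{\nabla f}h)(\Delta_f h)$ to collect all pieces into $v^2\frac{(\Delta_f h)^2}{m} + v^2 K|\nabla h|^2 + g(\nabla h,\nabla(v^2\Delta_f h))$. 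Step (3) is the one genuinely using the hypothesis and is where the equality case enters: Cauchy--Schwarz $\sum\mu_i^2 \geq \frac{1}{m}(\sum\mu_i)^2$ is equality iff all $m$ nonzero eigenvalues coincide, and the Ricci step (2) is equality iff $\mathrm{Ric}^{n-m}_f(\nabla h,\nabla h) = K|\nabla h|^2$; since these are the only inequalities used, equality in the Lemma holds iff both hold.

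The main obstacle I anticipate is purely bookkeeping: making sure the weight $v^2 = e^{2f/m}$ is threaded consistently through the cross terms so that the identity $g(\nabla h,\nabla(v^2\Delta_f h)) = v^2 g(\nabla h,\nabla\Delta_f h) + \frac{2}{m}v^2 (D_{\nabla f}h)(\Delta_f h)$ is used in exactly the right direction, and verifying that the $\frac{1}{m}(D_{\nabla f}h)^2$ term from rewriting $\mathrm{Ric}^\infty_f$ in terms of $\mathrm{Ric}^{n-m}_f$ is the same term that is consumed in completing the square. No hard analysis is needed — the function $h$ being merely $C^3$ near $p$ suffices since the formula is pointwise at $p$ — but one should note that the condition "$\mathrm{Hess}\, h|_p$ has $m$ nonzero eigenvalues" is used only to get the sharpened Cauchy--Schwarz with denominator $m$ rather than $n$, and nothing is claimed away from $p$.
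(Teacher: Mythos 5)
Your plan is the same as the paper's: multiply the weighted Bochner formula \eqref{eqn:WeightedBochner} by $v^2$, sharpen the Cauchy--Schwarz step to denominator $m$ using that $\mathrm{Hess}\,h|_p$ has only $m$ nonzero eigenvalues, expand $(\Delta h)^2=(\Delta_f h + D_{\nabla f}h)^2$, absorb the cross term into $g(\nabla h,\nabla(v^2\Delta_f h))$ via the product rule, and fold the residual quadratic-in-$df$ term into the passage from $\mathrm{Ric}^{\infty}_f$ to $\mathrm{Ric}^{n-m}_f$. That is exactly how the proof in the paper proceeds.

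However, you have a sign error at the outset that then muddles your bookkeeping. With $N=n-m$ we have $N-n=-m$, so
\[
\mathrm{Ric}^{n-m}_f = \mathrm{Ric}+\mathrm{Hess}\,f - \frac{df\otimes df}{-m} = \mathrm{Ric}^{\infty}_f + \frac{df\otimes df}{m},
\]
hence $\mathrm{Ric}^{\infty}_f = \mathrm{Ric}^{n-m}_f - \frac{1}{m}\,df\otimes df$, not $+\frac{1}{m}\,df\otimes df$ as you wrote. With the wrong sign, your rewritten Bochner identity carries $+\frac{1}{m}(D_{\nabla f}h)^2$, which then \emph{adds} to the $+\frac{1}{m}(D_{\nabla f}h)^2$ produced by expanding $(\Delta h)^2/m$, leaving a stray $\frac{2}{m}(D_{\nabla f}h)^2$ that the target inequality cannot absorb. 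With the correct sign, the $-\frac{1}{m}(D_{\nabla f}h)^2$ from the Ricci rewrite cancels exactly the $+\frac{1}{m}(D_{\nabla f}h)^2$ from the completion of the square; only the cross term $\frac{2}{m}(\Delta_f h)(D_{\nabla f}h)$ remains, and after multiplying by $v^2$ it is precisely what combines with $v^2 g(\nabla h,\nabla\Delta_f h)$ to produce $g(\nabla h,\nabla(v^2\Delta_f h))$. Your subsequent ``naive split does not work'' discussion is really a symptom of this slip rather than a separate obstacle. Once the sign is fixed your outline is correct, and your description of the equality case (all $m$ nonzero eigenvalues equal, from Cauchy--Schwarz, together with $\mathrm{Ric}^{n-m}_f(\nabla h,\nabla h)=K|\nabla h|^2$) matches the paper.
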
 

\begin{proof} 
We start with (\ref{eqn:WeightedBochner}) multiplied by $v^2$,

\begin{eqnarray*} v^2\frac{1}{2}\Delta_f  |\nabla  h|^2 = v^2 |\mathrm{Hess} h|^2 + v^2\mathrm{Ric}^{\infty}_f(\nabla h, \nabla h) + v^2g(\nabla h, \nabla \Delta_f h).   \end{eqnarray*}
Then we have 
\[ g(\nabla h, \nabla(v^2 \Delta_f h)) = v^2g(\nabla h, \nabla \Delta_f h)  + 2 v^2\frac{g(\nabla h, \nabla f)}{m}\Delta_f h  \]
and 
\begin{eqnarray*}
v^2|\mathrm{Hess} h|^2 \geq v^2\frac{(\Delta h)^2}{m} &=& v^2\frac{(\Delta_f h + g(\nabla h, \nabla f))^2}{m}\\
&=&  v^2 \left( \frac{(\Delta_f h)^2}{m} + 2 \frac{g(\nabla h, \nabla f)}{m} \Delta_f h + \frac{g(\nabla h, \nabla f)^2}{m} \right).
\end{eqnarray*}
Combining these three equations gives 
\[ \frac{1}{2} v^2 \Delta_f |\nabla h|^2\ \geq  v^2 \frac{(\Delta_f h)^2}{m} + v^2\mathrm{Ric}_f^{n-m}(\nabla h, \nabla h) + g(\nabla h, \nabla (v^2 \Delta_f h)).\]
Applying $\mathrm{Ric}_f^{n-m} \geq K$ then gives the formula in the lemma.  

If the inequality is an equality then we must have  $\mathrm{Ric}_f^{n-m}(\nabla h, \nabla h) = K|\nabla h|^2$ and $|\mathrm{Hess} h|^2 = \frac{(\Delta h)^2}{m}$,  which implies all of the non-zero eigenvalues of $\mathrm{Hess} h$ are the same. 
\end{proof} 

Note that Lemma  \ref{Lem:Bochner} will apply to any function $h$ when $m = n$, thus it gives a Bochner formula for $CD(K, 0)$.  In this paper, we'll be applying this  to  a (generalized) distance function $r$, i.e. a function such that $|\nabla r|=1$ on an open set where the function $r$ is smooth.  For a distance function,  we have $\nabla_{\nabla r} \nabla r = 0$ implying that $\mathrm{Hess} r$ has at most  $(n-1)$ non-zero eigenvalues and that the integral curves of $r$ are unit speed geodesics.    From the Bochner formula we  derive a new Laplacian comparison  for the distance function  for the condition $CD(0,1)$.  

\begin{theorem} 
Let $(M, g, f) $ satisfy the $CD(0,1)$ condition.   Fix a point $p \in M$ and let $r$ be the distance function to $p$.  Let $q$ be a point such that $r$ is smooth at $q$, and let $\gamma(t)$ be the unique minimal geodesic from $p$ to $q$, parametrized by arc-length.   Then 
\[ (\Delta_f r)(q) \leq  \frac{(n-1)}{ v^2(q) \int_0^{r(q)} v^{-2}(\gamma(t)) dt}, \]
where $v = e^{\frac{f}{n-1}}$. 
\end{theorem}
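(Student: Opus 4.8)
The plan is to apply Lemma \ref{Lem:Bochner} with $K = 0$ and $m = n-1$ (so that $n - m = 1$, matching the $CD(0,1)$ hypothesis) to the distance function $h = r$. As noted in the paragraph following the lemma, for a distance function $|\nabla r| = 1$ on the smooth set and $\nabla_{\nabla r}\nabla r = 0$, so $\mathrm{Hess}\, r$ has at most $n-1$ nonzero eigenvalues, and $\frac{1}{2}\Delta_f|\nabla r|^2 = 0$ identically. The Bochner inequality of Lemma \ref{Lem:Bochner} therefore collapses to
\[ 0 \;\geq\; v^2 \frac{(\Delta_f r)^2}{n-1} + g\bigl(\nabla r, \nabla(v^2 \Delta_f r)\bigr), \]
where $v = e^{f/(n-1)}$. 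Since $\nabla r = \gp$ along the geodesic $\gamma$, the term $g(\nabla r, \nabla(v^2\Delta_f r))$ is exactly $\frac{d}{dt}\bigl(v^2\Delta_f r\bigr)$ along $\gamma$. So writing $\varphi(t) = (v^2 \Delta_f r)(\gamma(t))$ and $w(t) = v^2(\gamma(t))$, the inequality becomes the Riccati-type differential inequality
\[ \varphi'(t) \;\leq\; -\frac{\varphi(t)^2}{(n-1)\, w(t)}. \]

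The next step is to integrate this ODE inequality. The standard trick is to divide by $\varphi^2$ where $\varphi \neq 0$; one gets $\bigl(\tfrac{1}{\varphi}\bigr)' = -\varphi'/\varphi^2 \geq \frac{1}{(n-1)w(t)}$. Care is needed near $t = 0$: since $r$ is smooth near $q$ and $\gamma$ is minimal from $p$, as $t \to 0^+$ one has $\Delta_f r \to +\infty$ (the model being $\Delta r \sim (n-1)/t$ plus the bounded $-D_{\nabla f} r$ contribution), so $\varphi(t) > 0$ for small $t$ and $\tfrac{1}{\varphi(t)} \to 0^+$ as $t \to 0^+$. Because $\varphi' \leq 0$ wherever $\varphi > 0$ and $\varphi$ starts positive, $\varphi$ stays positive on all of $(0, r(q)]$ — if it ever hit zero it would stay $\leq 0$ but it is decreasing, so it actually can't return; more carefully, the set where $\varphi > 0$ is an interval $(0, t_*)$ and on it $1/\varphi$ is increasing, hence bounded away from $0$, contradicting $1/\varphi \to 0$ unless $t_* \geq r(q)$. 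So $\varphi > 0$ throughout, and integrating $(1/\varphi)' \geq \frac{1}{(n-1)w}$ from $0$ to $r(q)$ gives
\[ \frac{1}{\varphi(r(q))} \;\geq\; \frac{1}{n-1}\int_0^{r(q)} \frac{dt}{v^2(\gamma(t))}, \]
which upon inverting and recalling $\varphi(r(q)) = v^2(q)(\Delta_f r)(q)$ is precisely the claimed bound.

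I expect the only real subtlety to be the boundary behavior at $t=0$: making rigorous that $v^2 \Delta_f r$ is positive and blows up like $(n-1)/t$ near the endpoint so that $1/\varphi$ has the right limit, and hence that one may legitimately integrate the differential inequality across the singularity. This is a routine asymptotic analysis of the distance function (the $f$-term $D_{\nabla f} r$ is bounded near $q$ since $f$ is smooth, and $v$ is smooth and positive, so $w(t) \to v^2(p) > 0$), but it should be stated carefully. Everything else — identifying $g(\nabla r, \nabla(v^2\Delta_f r))$ with the $t$-derivative along $\gamma$, and the elementary integration of the Riccati inequality — is mechanical.
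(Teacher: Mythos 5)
Your proposal is correct and follows essentially the same route as the paper: apply Lemma~\ref{Lem:Bochner} with $m=n-1$ to the distance function to obtain the Riccati inequality $\frac{d}{dt}(v^2\Delta_f r)\leq -v^2(\Delta_f r)^2/(n-1)$, then integrate $(1/\varphi)'\geq 1/((n-1)w)$ from a small $\varepsilon$ to $r(q)$ and use $\varphi(\varepsilon)\to\infty$ as $\varepsilon\to 0^+$ to kill the boundary term. The paper does not belabor the positivity of $\varphi$ on $(0,r(q)]$ as you do; the cleanest way to dispose of that point is to note that if $\varphi(r(q))\leq 0$ the claimed bound is vacuous, while if $\varphi(r(q))>0$ then monotonicity of $\varphi$ (which holds since $\varphi'\leq 0$ unconditionally) forces $\varphi>0$ on the whole interval, so the integration is legitimate.
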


\begin{remark} Note that when $f$ is constant, we have that $v = c$ for a positive constant and
\[  v^2(q) \int_0^{r(q)} v^{-2}(\gamma(t)) dt = r(q) \]
so we recover the usual Laplacian comparison, $\Delta r \leq \frac{n-1}{r}$ for $\mathrm{Ric} \geq 0$. 
\end{remark}

\begin{proof} 

Apply Lemma  \ref{Lem:Bochner} to $h=r$ to obtain 
\[ \frac{d}{dt}\left( v^2 \Delta_f r \right)  \leq  - v^2 \frac{(\Delta_f r)^2}{n-1}.   \]
If we set $\lambda = \left(v^2 \Delta_f r\right)\circ \gamma$ we have 
\[ \dot{\lambda} \leq  - \frac{\lambda^2}{v^2(n-1)}\]
which is a Ricatti equation  that was also used in \cite{Wylie}.    For any sufficiently small $\varepsilon$, we have 
\begin{eqnarray*}
\dot{\lambda} &\leq&  - \frac{\lambda^2}{v^2(n-1)}\\
(n-1) \int_{\varepsilon}^{r(q)} \frac{\dot{\lambda}}{\lambda^2}  dt&\leq& - \int_{\varepsilon}^{r(q)} v^{-2}(\gamma(t)) dt \\
(n-1) \left( -\lambda^{-1} (r(q)) + \lambda^{-1} (\varepsilon)\right) &\leq& - \int_{\varepsilon}^{r(q)} v^{-2}(\gamma(t)) dt. \\
\end{eqnarray*}
Since $\lambda(\varepsilon) \rightarrow \infty$ as $\varepsilon \rightarrow 0$ we have 
\begin{eqnarray*}
(n-1) \left( -\lambda^{-1} (r(q)) \right)&\leq& - \int_0^{r(q)} v^{-2}(\gamma(t)) dt \\
\lambda(r(q)) &\leq& \frac{(n-1)}{\int_0^{r(q)} v^{-2}(\gamma(t)) dt}.
\end{eqnarray*}
This implies the result by the definition of $\lambda$. 
\end{proof}

%In the proof of the splitting theorem, we use the the Laplacian comparison to prove that the Busemann functions are subharmonic, the same proof now gives that the Busemann functions are $X$-subharmonic under our assumptions.  

The proof of our splitting theorem follows the classical argument using Busemann functions. Given a non-compact manifold $M$ and a ray $\gamma$ we define the Busemann function  to $\gamma$ to be the function $b^{\gamma}(x) = \lim_{t \rightarrow \infty} (t - d(x, \gamma(t)))$.   $b^{\gamma}$ is Lipschitz with Lipschitz constant $1$ and is thus differentiable almost everywhere.    We want to show that  when we have a $CD(0,1)$ space with $f$ bounded above, then $\Delta_f b^{\gamma} \geq 0$.   

At the points where the Busemann function is not smooth, we interpret $\Delta_f b^{\gamma}$ in the weak sense in terms of barrier functions.  That is,  for a Lipschitz function $h$ we say that $\Delta_f(h) \geq 0$  at a point $x$ if, for every $\varepsilon>0$,  there is a $C^2$  function $h_{\varepsilon}$ defined in a neighborhood of $x$ such that $h_{\varepsilon}(x) = b^{\gamma}(x)$, $h_{\varepsilon} \leq b^{\gamma}$ in a neighborhood of $x$, and $\Delta_f(h_{\varepsilon}) \geq - \varepsilon$.     The notion that a function have $\Delta_f h \leq 0$ is defined similarly.  We call the functions $h_{\varepsilon}$ \emph{barrier functions}. 

\begin{lemma} \label{Lemma:Busemann} Suppose that $(M,g,f)$ is $CD(0,1)$  and $f$ is bounded above,  then $ \Delta_f( b^{\gamma}) \geq 0 $.
\end{lemma}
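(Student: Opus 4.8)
The plan is to follow the classical Cheeger--Gromoll argument, using the new Laplacian comparison theorem in place of the usual one, and to exploit the hypothesis that $f$ is bounded above in exactly the step where one controls the ``correction'' term coming from the integral $\int_0^r v^{-2}$.

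\textbf{Setup via support functions.} Fix a point $x_0$ where we wish to verify $\Delta_f b^\gamma \geq 0$. For each $t$ let $\gamma_t$ be a minimal geodesic from $x_0$ to $\gamma(t)$, and set $b_t(x) = t - d(x,\gamma(t))$, so that $b_t \uparrow b^\gamma$ pointwise and $b_t(x_0) \le b^\gamma(x_0)$ with equality in the limit. The standard trick is to replace $b_t$, which is only Lipschitz, by the function $h_t(x) = b^\gamma(x_0) + d(p_t, x_0) - d(p_t, x)$ where $p_t = \gamma_t(s)$ is a point a fixed distance $s$ out along $\gamma_t$; since $d(p_t, \cdot)$ is smooth near $x_0$ for $t$ large (the point $x_0$ is not conjugate to $p_t$ and lies on the interior of a minimal geodesic), $h_t$ is a $C^2$ lower support function for $b^\gamma$ at $x_0$ up to an error that tends to $0$. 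Concretely one shows $h_t(x_0)\to b^\gamma(x_0)$ and $h_t \le b^\gamma + (\text{small})$ near $x_0$, which after a harmless vertical shift produces genuine barrier functions $h_\varepsilon$.

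\textbf{Applying the new Laplacian comparison.} At $x_0$ we have $\Delta_f h_t(x_0) = -\Delta_f d(p_t,\cdot)(x_0)$, and the Laplacian comparison theorem of the previous section (applied with base point $p_t$) gives
\[
\Delta_f d(p_t,\cdot)(x_0) \le \frac{n-1}{v^2(x_0)\int_0^{d(p_t,x_0)} v^{-2}(\sigma(u))\,du},
\]
where $\sigma$ is the unit-speed minimal geodesic from $p_t$ to $x_0$; note $d(p_t,x_0)\to\infty$ as $t\to\infty$. Hence $\Delta_f h_t(x_0) \ge -\,(n-1)\big/\big(v^2(x_0)\int_0^{d(p_t,x_0)} v^{-2}\,du\big)$. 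Here is where boundedness of $f$ from above enters: $f \le C$ means $v = e^{f/(n-1)} \le e^{C/(n-1)}$ is bounded above, hence $v^{-2}$ is bounded \emph{below} by a positive constant $c_0 > 0$, so $\int_0^{d(p_t,x_0)} v^{-2}(\sigma(u))\,du \ge c_0\, d(p_t,x_0) \to \infty$. Therefore the right-hand side tends to $0$, and for every $\varepsilon>0$ we get $\Delta_f h_t(x_0) \ge -\varepsilon$ once $t$ is large enough. Combined with the support property this is exactly the definition of $\Delta_f b^\gamma(x_0) \ge 0$ in the barrier sense, and since $x_0$ was arbitrary the lemma follows.

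\textbf{Expected main obstacle.} The delicate point is not the curvature estimate — that is handed to us by the comparison theorem — but the routine-looking construction of the $C^2$ barriers: one must check that the ``distance from the shifted point'' functions $h_t$ are smooth in a fixed neighborhood of $x_0$ independent of $t$, satisfy $h_t \le b^\gamma$ (up to an error controllable uniformly in a neighborhood, using the Lipschitz bound and the triangle inequality), and satisfy $h_t(x_0)\to b^\gamma(x_0)$; this is the standard but slightly fiddly Cheeger--Gromoll bookkeeping, and the only genuinely new input is replacing ``$\Delta d \le (n-1)/r$'' by the weighted estimate and observing that the upper bound on $f$ keeps the denominator growing. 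No serious new difficulty arises beyond organizing that limit carefully.
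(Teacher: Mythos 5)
Your overall strategy matches the paper's exactly: build smooth lower barriers for $b^\gamma$ at a point $x_0$ out of distance functions to far-off points, apply the new weighted Laplacian comparison to get $\Delta_f h_t(x_0) \ge -(n-1)\big/\big(v^2(x_0)\int_0^{R_t} v^{-2}\big)$, and use that $f$ bounded above forces $v^{-2}$ to be bounded below by a positive constant so the integral diverges. That is the paper's proof, and you have correctly isolated the one new ingredient (the weighted comparison plus the upper bound on $f$).

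The gap is in the barrier construction, which you flagged as ``routine bookkeeping'' but which is in fact where you deviate from the paper. First, an internal inconsistency: you set $p_t = \gamma_t(s)$ at a ``fixed distance $s$'' and then invoke $d(p_t,x_0)\to\infty$; if $s$ is genuinely fixed the denominator $\int_0^s v^{-2}$ stays bounded and the argument dies, so you need $s=s(t)\to\infty$. Second, and more seriously, with $p_t$ on the minimizing geodesic $\gamma_t$ from $x_0$ to $\gamma(t)$, the function $h_t(x)=b^\gamma(x_0)+d(p_t,x_0)-d(p_t,x)$ is not a lower barrier for $b^\gamma$ in the sense the paper defines. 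The triangle inequality through $p_t$ gives only $b_t(x)\ge b_t(x_0)+d(p_t,x_0)-d(p_t,x)$, hence $h_t(x)\le b^\gamma(x)+\delta_t$ with $\delta_t:=b^\gamma(x_0)-b_t(x_0)>0$; subtracting $\delta_t$ repairs the inequality $h_t-\delta_t\le b^\gamma$ but breaks the required equality $(h_t-\delta_t)(x_0)=b^\gamma(x_0)$, so no single vertical shift produces a genuine barrier. The paper avoids this by first passing to the asymptotic ray $\overline{\gamma}$ at $x_0$ (a subsequential limit of the $\gamma_{t_i}$) and taking $h_t(y)=t-d(y,\overline{\gamma}(t))+b^\gamma(x_0)$: the asymptote identity $b^\gamma(\overline{\gamma}(t))=b^\gamma(x_0)+t$, together with $b^\gamma(y)\ge b^\gamma(\overline{\gamma}(t))-d(y,\overline{\gamma}(t))$, gives exact barriers with no error term. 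Replace your $p_t$ with $\overline{\gamma}(t)$ and the rest of your argument is precisely the paper's proof.
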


\begin{proof}
Let $x \in M$, we construct the barrier functions for $b^{\gamma}$ in the standard way.  That is, let $t_i \rightarrow \infty$ and let $\sigma_i$ be minimal geodesics from $x$ to $\gamma(t_i)$, the sequence $\sigma'(0)$ sub-converges to some $v \in T_xM$.  Let  $\overline{\gamma}$ be the geodesic with $\overline{\gamma}(0) = x$ and $\overline{\gamma}'(0) = v$.  Then $\overline{\gamma}$ is a ray, called an asymptotic ray to $\gamma$. 

Define $h_t(y) = t - d(y, \overline{\gamma}(t))+ b^{\gamma}(x)$, by the standard arguments in for example \cite{WeiWylie}, $h_t$ is  a smooth barrier function to $b^{\gamma}$ at $x$.  Now we compute 
\begin{eqnarray*}
\Delta_f( h_t) &=& - \Delta_f(d(y, \overline{\gamma}(t)) \geq \frac{ -(n-1)}{v^2(y) \int_0^{d(y, \overline{\gamma}(t))} v^{-2}(\gamma(s)) ds}  
\end{eqnarray*}
By the assumption that $f$ is bounded from above,  we have that the quantity $ \int_0^{d(y, \overline{\gamma}(t))} v^{-2}(\gamma(s)) ds$ goes to $\infty$ as $t \rightarrow \infty$, implying that $\Delta_f(b^{\gamma}) \geq 0$. 
\end{proof}

Aside from using the Bochner formula to control the Laplacian of the distance function and thus the Busemann functions, the other application of the Bochner formula used in the splitting theorem is in classifying constant gradient harmonic functions on spaces with $\mathrm{Ric} \geq 0$ as linear functions in a flat factor.  We get a different rigidity classification for  $CD(0,1)$.

\begin{lemma} \label{Lemma:Rigidity} Suppose that  $(M,g, f)$ is  $CD(0,1)$ where $(M,g)$ is a complete Riemannian manifold.   If  there is a smooth function $r$ on $(M,g)$  such that $|\nabla r|^2=1$  and $\Delta_f(r) = 0$, then  the metric $g$ is a warped product of the form $g = dr^2 + e^{\frac{2\phi(r)}{n-1}}g_{L}$ and $f = \phi(r) + f_L$ where $f_L: L \rightarrow \mathbb{R}$.
\end{lemma}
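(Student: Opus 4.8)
The plan is to show that the function $r$, which satisfies $|\nabla r|^2 = 1$ and $\Delta_f r = 0$, forces equality in the Bochner-type Laplacian comparison, and then to extract the warped product structure from the equality case of Lemma \ref{Lem:Bochner}. First I would observe that since $r$ is a globally defined smooth distance-type function with $|\nabla r|=1$, the integral curves of $\nabla r$ are unit-speed geodesics and $\mathrm{Hess}\, r$ has at most $n-1$ nonzero eigenvalues, so Lemma \ref{Lem:Bochner} applies with $K=0$ and $m = n-1$. Feeding $\Delta_f r = 0$ into the Bochner inequality gives $\frac{1}{2} v^2 \Delta_f |\nabla r|^2 \geq g(\nabla r, \nabla(v^2 \Delta_f r)) = 0$, but $|\nabla r|^2 = 1$ is constant so the left side is zero. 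Hence equality holds everywhere, and the equality clause of Lemma \ref{Lem:Bochner} tells us that the $n-1$ nonzero eigenvalues of $\mathrm{Hess}\, r$ are all equal — call the common value $\eta$ — and that $\mathrm{Ric}_f^{n-m}(\nabla r, \nabla r) = \mathrm{Ric}_f^{1}(\nabla r, \nabla r) = 0$.

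Next I would set up coordinates adapted to $r$. Since $|\nabla r| = 1$, the level sets $L_c = \{r = c\}$ are smooth hypersurfaces and the normal exponential flow of $\nabla r$ identifies $M$ diffeomorphically with $\mathbb{R} \times L$, where $L = L_0$, with $g = dr^2 + g_r$ for a family of metrics $g_r$ on $L$. The shape operator of the level sets is $S = \mathrm{Hess}\, r|_{TL}$, which by the equality case is $\eta(r,x)\,\mathrm{Id}$; that is, each level set is totally umbilic with umbilicity function $\eta$. The standard Riccati/radial evolution $\partial_r S + S^2 = -R(\cdot,\partial_r)\partial_r$ restricted to a scalar umbilic family gives $\partial_r g_r = 2\eta\, g_r$, so $g_r = e^{2\int_0^r \eta(s,x)\,ds} g_0$ provided $\eta$ turns out to depend only on $r$. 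To see that $\eta = \eta(r)$, I would use $\Delta_f r = \Delta r - D_{\nabla f} r = (n-1)\eta - \partial_r f = 0$, hence $\partial_r f = (n-1)\eta$; combined with $\mathrm{Ric}_f^1(\nabla r,\nabla r) = \mathrm{Ric}(\partial_r,\partial_r) + \mathrm{Hess}\, f(\partial_r,\partial_r) - (\partial_r f)^2/(1-n) = 0$ and the radial curvature identity $\mathrm{Ric}(\partial_r,\partial_r) = -(n-1)(\partial_r\eta + \eta^2)$, one gets an ODE in $r$ relating $\eta$, $\partial_r\eta$, and $\partial_r f$ at each fixed $x$. Writing $\phi$ so that $\partial_r \phi = (n-1)\eta$ matches the target form $g = dr^2 + e^{2\phi(r)/(n-1)} g_L$; the point is that $e^{2\phi/(n-1)}$ is exactly the conformal factor $e^{2\int \eta}$, and the equation $\mathrm{Ric}_f^1(\partial_r,\cdot) = 0$ in mixed directions (which one derives just as in Proposition \ref{Prop:WarpedSplittingGradient}) shows that $\partial_r f$ is constant along $L$ and $\partial_{y^k} f$ is constant in $r$, forcing $f = \phi(r) + f_L(x)$ and simultaneously forcing $\eta$ to be a function of $r$ alone.

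The main obstacle I expect is the last step: promoting the pointwise umbilicity and the radial equation to a genuine product-type separation, i.e. showing $\eta$ depends only on $r$ and that $f$ splits as $\phi(r) + f_L(x)$. This is the analogue of the delicate part of the classical splitting proof where one upgrades "each level set is totally geodesic" to "the metric is a product"; here the umbilic factor complicates the bookkeeping. I would handle it exactly as in Proposition \ref{Prop:WarpedSplittingGradient}: compute $\mathrm{Hess}\, f(\partial_r, \partial_{y^k})$ in these adapted coordinates, use the connection formulas (which, once we know the metric has the warped form $dr^2 + e^{2\phi/(n-1)}g_L$, are those of Proposition \ref{Proposition:Computation} with $\psi = \phi + f_L$), and conclude from $\mathrm{Ric}_f^1(\partial_r, \partial_{y^k}) = 0$ that $\partial_r\partial_{y^k} f = 0$, hence the desired separation of variables, and then invoke \cite[Theorem 1]{FGKU} to finish. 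A secondary technical point is making sure the normal exponential flow of $\nabla r$ is complete and covers all of $M$ (needed to globalize the diffeomorphism with $\mathbb{R}\times L$); this follows from completeness of $(M,g)$ together with $|\nabla r| = 1$, since then the flow lines are complete unit-speed geodesics.
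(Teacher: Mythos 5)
Your proposal is correct and follows essentially the same route as the paper's proof: equality in Lemma \ref{Lem:Bochner} gives umbilicity of the level sets and $\mathrm{Ric}_f^1(\nabla r,\nabla r)=0$, the identity $\Delta_f r=0$ converts the umbilicity factor into $\partial_r f/(n-1)$, integration along the $r$-flow gives the twisted product $g=dr^2+e^{2f/(n-1)}g_L$, and then the $\mathrm{Ric}_f^1(\partial_r,\partial_{y^k})=0$ computation of Proposition \ref{Prop:WarpedSplittingGradient} separates $f$ and upgrades the twisted product to a warped product. The only slip is the clause ``provided $\eta$ turns out to depend only on $r$'': the integration $g_r=e^{2\int_0^r\eta(s,x)\,ds}g_0$ is valid unconditionally, and $\eta=\eta(r)$ is a \emph{consequence} of the later separation $f=\phi(r)+f_L(x)$ rather than a prerequisite, but this does not affect the argument.
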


\begin{proof}
The fact that $M$ splits topologically as $\mathbb{R} \times L$ is a simple consequence of Morse theory and is true whenever one has a smooth function $r$ with $|\nabla r|=1$.  We  can write the metric as   $g = dr^2 + g_r$, where $g_r$ is the metric restricted to a  level set of $r$.   The assumptions imply that we have equality in Lemma \ref{Lem:Bochner}, so $\mathrm{Ric}_f^{1} (\nabla r, \nabla r)= 0$ and $\mathrm{Hess} r = \alpha g_r$ for some function $\alpha$.  But we also have $\Delta r = (n-1) \alpha = g(\nabla f, \nabla r)$ so 
\[ \mathrm{Hess} r =\frac{ g(\nabla f, \nabla r)}{n-1} g_r . \]
This implies that 
\[ L_{\nabla r} \left(e^{\frac{-2f}{n-1}} g_r\right) = 0 \]
which implies that 
$g_r = e^{\frac{2\left( f(r, \cdot) - f(0, \cdot)\right)}{n-1}} g_0$. 
This gives us that the metric is a twisted product $g = dr^2 + e^{\frac{2f}{n-1}} g_L$ where $g_L=e^{ \frac{-2f(0, \cdot)}{n-1}}g_0$ is a fixed metric on $L$.  Proposition \ref{Prop:WarpedSplittingGradient}  then implies a warped product splitting, which completes the proof. 
\end{proof}

Now with the lemmas above we can quickly prove the splitting theorems using the standard arguments involving Busemann functions. 

\begin{proof}[Proof of Theorem \ref{Theorem:Splitting}]

Let $\gamma$ be a line in our space, and let $\gamma^+$ and $\gamma^-$ be the two rays that make up the line $\gamma$. Let $b^{\pm}$ be the corresponding Busemann functions.  From Lemma \ref{Lemma:Busemann} we know that $\Delta_f(b^{\pm}) \geq 0$.  Using the standard arguments in the first part of the proof of, for example,  \cite[Theorem 6.1]{WeiWylie} using  the maximum principle one obtains that $b^{+} = - b^{-}$  thus  $\Delta_f(b^{\pm})= 0$, which implies that  $b^{\pm}$ are both smooth by elliptic regularity.  An additional standard argument then gives that  $|\nabla (b^{\pm})| = 1$ at every point.   From Lemma \ref{Lemma:Rigidity} we obtain the  warped  product splitting.  %Proposition \ref{Prop:WarpedSplittingGradient} implies in the gradient case that the metric is a warped product. 
\end{proof}

\begin{proof}[Proof of Corollary \ref{Corollary:Splitting}] 
Since $(M,g,f)$ is $CD(0,N)$, it is also $CD(0,1)$ so Theorem \ref{Theorem:Splitting} implies that $g$ is a  warped product, $g= dr^2 + e^{\frac{2\phi}{n-1}} g_L$ and $f = \phi(r) + f_L$.    As we saw in Section 2, we also have that $\mathrm{Ric}_f^{1}( \frac{\partial}{\partial r} ,  \frac{\partial}{\partial r} ) = 0$. Then, 
\begin{eqnarray*}
\mathrm{Ric}_f^{N}\left( \frac{\partial}{\partial r} ,  \frac{\partial}{\partial r} \right) &=& \mathrm{Ric}_f^{1} \left( \frac{\partial}{\partial r} ,  \frac{\partial}{\partial r} \right) + \left( \frac{N-1}{(n-1)(n-N)} \right) \left(\frac{d\phi}{dr}\right)^2  \\
&=&  \left( \frac{N-1}{(n-1)(n-N)} \right) \left(\frac{d\phi}{dr}\right)^2. 
\end{eqnarray*}
Since $N-1<0$ we must have $\frac{d\phi}{dr} = 0$.    This implies that metric $g$ is a product metric, which we can write as $g = dr^2 + g_L$ and   $f$ is a function on $L$ only. \end{proof}

\section{Structure theorem and applications}

Now we consider applying our splitting theorem iteratively.  First note that if we have a space with a line that is $CD(0,N)$ for $N<1$ with $f$ bounded above,  then  we have an isometric product splitting $M = \mathbb{R} \times L$ and $f$ is a function on $L$.    Then, $(L, g_L, f_L)$ is $CD(0, N-1)$ and $f_L$ is bounded above, so if $L$ contains a line  then we can apply the splitting theorem to $L$.  Iterating this argument, one obtains that $M$ is isometric to a product metric of the form $M = \mathbb{R}^k \times L$ and $f$ is a function on $L$ with $(L, g_L, f_L)$ being $CD(0, N-k)$.   A similar argument in the $CD(0,1)$ case yields the following. 

 \begin{theorem} \label{Theorem:IterativeSplitting}
Suppose that $(M,g,f)$ is $CD(0,1)$ and $f$ is bounded above,  then $M$ is diffeomorphic to $\mathbb{R}^k \times L$ and the metric $g$ is of the form
\begin{eqnarray*}
g &=& dr^2 + e^{\frac{2\phi(r)}{n-1}} g_{\mathbb{R}^{k-1}} + e^{\frac{2\phi(r)}{n-1}} g_L 
\end{eqnarray*}
Where $g_{\mathbb{R}^{k-1}}$ denotes the Euclidean metric, $(L, g_L)$ has no lines, $f= \phi(r) + f_L$, and  $(L, g_L, f_L)$ is $CD(0, 1-k)$. \end{theorem}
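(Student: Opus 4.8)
The plan is to proceed by induction on the number of splittings, using Theorem \ref{Theorem:Splitting} as the base step and carefully tracking how the warping factor and the generalized dimension behave under iteration. The subtlety is that, unlike the $N<1$ case, a $CD(0,1)$ warped product splitting does \emph{not} reduce the dimension parameter to something strictly below $1$ on the fiber -- Proposition \ref{Proposition:GradientExample} tells us only that $(L,g_L,f_L)$ is $CD(0,0)$, and a $CD(0,0)$ space carries no a priori line obstruction of its own. So the iteration cannot be run naively on the fiber alone; the trick is to \emph{re-introduce a line} by splitting off Euclidean factors one at a time and then checking the resulting structure.

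First I would apply Theorem \ref{Theorem:Splitting} to the given line to write $M = \mathbb{R}\times L_1$ with $g = dr^2 + e^{\frac{2\phi_1(r)}{n-1}} g_{L_1}$ and $f = \phi_1(r) + f_{L_1}$. By Proposition \ref{Proposition:GradientExample}, $(L_1, g_{L_1}, f_{L_1})$ is $CD(0,0)$, and $f_{L_1}$ inherits an upper bound from $f$ and $\phi_1$ (here one should note $\phi_1$ is automatically bounded, or at least that the boundedness-above of $f$ forces boundedness-above of $f_{L_1}$ once the $r$-dependence is isolated -- this needs the $f$-completeness / boundedness hypothesis to be used cleanly). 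Now if $L_1$ has no lines we are done with $k=1$. Otherwise $L_1$ has a line; but $(L_1,g_{L_1},f_{L_1})$ is only $CD(0,0)$, so I cannot directly re-apply Theorem \ref{Theorem:Splitting} to $L_1$. Instead, the key observation is that the \emph{fiber} $L_1$ together with a line in it gives, inside $M$, essentially a $CD(0,1)$ configuration after accounting for the extra warped $\mathbb{R}$-direction -- more precisely, a line in $L_1$ lifts to a geodesic in $M$ lying in a single $r$-slice, and one re-runs the splitting argument for $M$ relative to this new line, or equivalently one observes that splitting a further $\mathbb{R}$ off $L_1$ must be Euclidean because the warping is already "used up" in the first factor.

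The cleanest way to organize the induction is therefore: at stage $j$ one has $M$ diffeomorphic to $\mathbb{R}^{j}\times L_j$ with $g = dr^2 + e^{\frac{2\phi(r)}{n-1}} g_{\mathbb{R}^{j-1}} + e^{\frac{2\phi(r)}{n-1}} g_{L_j}$, $f = \phi(r) + f_{L_j}$, and $(L_j, g_{L_j}, f_{L_j})$ being $CD(0, 1-j)$. If $L_j$ has no lines, stop. If $L_j$ has a line, then since $1-j < 1$ for $j \geq 1$, $(L_j, g_{L_j}, f_{L_j})$ is $CD(0,N)$ with $N<1$, so Corollary \ref{Corollary:Splitting} applies to $L_j$ and yields an \emph{isometric} product $L_j = \mathbb{R}\times L_{j+1}$ with $f_{L_j}$ a function on $L_{j+1}$ and $(L_{j+1},g_{L_{j+1}},f_{L_{j+1}})$ being $CD(0,-j) = CD(0, 1-(j+1))$. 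Feeding this back, the new $\mathbb{R}$-factor enters $g$ with the same warping factor $e^{\frac{2\phi(r)}{n-1}}$ (since $L_j$ sat inside $M$ warped by exactly that factor, and the product splitting of $L_j$ is isometric, so the warping is simply distributed onto the new Euclidean coordinate), giving the stage-$(j+1)$ form. This uses only Theorem \ref{Theorem:Splitting} for the first step and Corollary \ref{Corollary:Splitting} thereafter, which is why the dimension parameter drops by exactly one each time past the first.

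The main obstacle -- and the one step I would be most careful about -- is the transition from the first (warped, $CD(0,1)$) splitting to the second: justifying rigorously that a line in the $CD(0,0)$ fiber $L_1$ forces a \emph{further isometric} $\mathbb{R}$-splitting of $L_1$ compatible with the ambient warped structure, i.e. that one is genuinely in the hypothesis range of Corollary \ref{Corollary:Splitting} at stage $j=1$ where $1-j = 0 < 1$. One must verify that $(L_1, g_{L_1}, f_{L_1})$ is complete and that $f_{L_1}$ is bounded above (or $f_{L_1}$-complete) so the corollary applies -- completeness of $(L_1,g_{L_1})$ follows since the warped product $M$ is complete and the warping factor is bounded below away from zero on compact $r$-intervals, but this does require knowing $\phi$ doesn't blow down, which in turn follows from the argument in Proposition \ref{Proposition:GradientExample} (solutions of $y'' \le -ae^{-2y}$ can't exist for all $r$, so in the borderline $CD(0,0)$ case $\phi$ stays controlled). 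Once that bookkeeping is in place the induction terminates in at most $n$ steps since the dimension of $L_j$ strictly decreases, yielding the claimed form with $L = L_k$ line-free and $CD(0,1-k)$.
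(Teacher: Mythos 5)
Your proposal follows the same route as the paper: apply Theorem \ref{Theorem:Splitting} once to get the initial warped splitting $g = dr^2 + e^{\frac{2\phi(r)}{n-1}}g_{L'}$, use Proposition \ref{Proposition:GradientExample} to conclude that $(L',g_{L'},f_{L'})$ is $CD(0,0)$ with $f_{L'}$ bounded above, and then iterate Corollary \ref{Corollary:Splitting} (dropping the dimension parameter by one at each step) to peel off Euclidean factors from $L'$ until no line remains; since these latter splittings are isometric, the Euclidean factors all carry the common warping factor $e^{\frac{2\phi(r)}{n-1}}$. One small correction: your remark that you cannot re-apply Theorem \ref{Theorem:Splitting} to the $CD(0,0)$ fiber is inaccurate --- $\mathrm{Ric}_f^N$ is monotone in $N$ for $N$ below the dimension, so $CD(0,0)$ implies $CD(0,1)$ --- but since you (correctly) use Corollary \ref{Corollary:Splitting} instead, which gives the needed isometric splitting, this does not affect the argument. (Also, a line in $L_1$ does not in general lift to a geodesic lying in a single $r$-slice of a warped product, so the ``lifting'' alternative you mention in passing would not work; your organized induction is the right formulation.)
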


\begin{proof}
Let $(M, g, f)$ be $CD(0,1)$  with $f$ bounded above  and containing a line.  Then we have $g = dr^2 + e^{\frac{2\phi(r)}{n-1}} g_{L'}$, $f = \phi + f_{L'}$ and by Proposition \ref{Proposition:GradientExample} $(L, g_{L'}, f_{L'})$ is $CD(0,0)$.  Since $f$ is bounded above, so is $f_{L'}$ so we can split $L'$ isometrically as $\mathbb{R}^{k-1} \times L$ with  $f_{L'}=f_L$ is a function on $L$ only and  $(L, g_{L}, f_L)$ is $CD(0, 1-k)$ and $L$ contains no lines.  Then  we obtain the splitting
\begin{align*} g = dr^2 +e^{\frac{2\phi(r)}{n-1}} g_{\mathbb{R}^{k-1}} + e^{\frac{2\phi(r)}{n-1}} g_{L}. \end{align*}
\end{proof}
 
Despite the ease with which we can prove this structure theorem, there is one subtle point that is important for the applications below.  That is,  for a warped product, it is not  true that lines in the fiber $L$ will always lift to lines in $M$ nor that lines  in $M$ always project to lines in $L$.    For a simple example of the later case,    consider Euclidean space  written in polar coordinates $dr^2 + r^2 g_{S^{n-1}}$ and a line that is not through the origin.   We first show that this issue with projections is excluded if we use the fact again that $f$ is bounded above. 

\begin{proposition} \label{Proposition:WPLines}
Consider a warped product metric of the form  $g = dr^2 + v^2(r) g_L$ where $v>0$ is bounded from above. Let $\gamma: (a,b) \rightarrow \mathrm{M}$ be a unit speed minimizing geodesic  in  $M$ and write  $\gamma(s) = (\gamma_1(s), \gamma_2(s))$,  where $\gamma_1$ and $\gamma_2$ are the projections in the factors $\mathbb{R}$ and $L$.    Then 
\begin{enumerate}
\item  $\gamma_2$ is  either constant or its image is a minimizing geodesic in $(L,g_L)$. 
%\item  If $\gamma_2$ is not constant, then there is a constant $C>0$, depending on $\gamma$ such that $length(\gamma_2) \geq  C length(\gamma)$. 
\item If $\gamma_2$ is not constant and $\gamma$ is a line in $M$, then the image of $\gamma_2$ is a line in $L$. 
%\item  The manifold with density $(L, g_L, f_L)$  has bounded energy distortion. 
\end{enumerate}
\end{proposition}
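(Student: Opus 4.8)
The plan is to analyze the geodesic equations for the warped product metric $g = dr^2 + v^2(r) g_L$ and exploit the first integral coming from the warping. First I would write down the geodesic equations for $\gamma(s) = (\gamma_1(s), \gamma_2(s))$. A standard computation (or using Proposition \ref{Proposition:Computation} with $\psi$ a function of $r$ alone) shows that the $L$-component satisfies $\nabla^L_{\dot\gamma_2}\dot\gamma_2 = -\frac{2 v'(\gamma_1)}{v(\gamma_1)}\dot\gamma_1\,\dot\gamma_2$, so after a reparametrization the image of $\gamma_2$ is a geodesic in $(L,g_L)$; this already gives that $\gamma_2$ is either constant or traces out an (unparametrized) geodesic. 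To upgrade ``geodesic'' to ``minimizing geodesic'' for part (1), I would argue by contradiction: if the image of $\gamma_2$ failed to be minimizing between two of its points, one could replace the corresponding sub-arc of $\gamma_2$ by a strictly shorter curve in $L$ while keeping the same $r$-component $\gamma_1$, and since the length functional on the warped product of a curve $(\gamma_1,\sigma)$ is $\int \sqrt{\dot\gamma_1^2 + v^2(\gamma_1)|\dot\sigma|_{g_L}^2}\,ds$, shortening $\sigma$ strictly decreases the length — contradicting that $\gamma$ is minimizing in $M$.

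For part (2), I want to show that if $\gamma:(-\infty,\infty)\to M$ is a line with $\gamma_2$ nonconstant, then $\gamma_2$ restricted to all of $\mathbb{R}$ is minimizing in $L$, i.e. it is a line in $L$ (I should first note $\gamma_2$ is defined and nonconstant on all of $\mathbb{R}$: by part (1) on each finite subinterval it is a reparametrized geodesic, and if it were constant on some subinterval it would be constant everywhere). The subtlety here — and where the hypothesis that $v$ is bounded above enters — is controlling arclength. Write $c = \sup_r v(r) < \infty$. The key estimate is that the $g_L$-length of $\gamma_2|_{[s_1,s_2]}$ is at least $\frac{1}{c}$ times the $g$-length of $\gamma|_{[s_1,s_2]} = s_2 - s_1$ minus a controlled boundary term: more precisely, since $|\dot\gamma_2|_{g_L} = \frac{1}{v(\gamma_1)}\sqrt{1 - \dot\gamma_1^2} \geq \frac{1}{c}\sqrt{1-\dot\gamma_1^2}$, and $|\dot\gamma_1|\le 1$, one gets $\mathrm{length}_{g_L}(\gamma_2|_{[s_1,s_2]}) \to \infty$ as $s_2 - s_1 \to \infty$ (one must rule out that $\dot\gamma_1^2 \to 1$ along a long interval, which would force $\gamma_1$ to escape to $\pm\infty$ at unit speed — this is fine since $\gamma_1$ is just a real-valued function and $\dot\gamma_1$ cannot stay near $\pm 1$ without the $\gamma_2$ contribution vanishing, but here one uses the geodesic structure: $v(\gamma_1)^2|\dot\gamma_2|^2_{g_L}$ is nonincreasing or controlled, so $|\dot\gamma_2|_{g_L}$ stays bounded below since $v$ is bounded above). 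Then, fixing points $\gamma_2(s_1)$ and $\gamma_2(s_2)$ on the image, suppose $\sigma$ were a strictly shorter curve in $L$ joining them; concatenating $\gamma|_{(-\infty,s_1]}$, the lift $(\gamma_1, \sigma)$ (suitably parametrized with the same $r$-endpoint values), and $\gamma|_{[s_2,\infty)}$ would produce a curve from $\gamma(-\infty)$-side to $\gamma(+\infty)$-side that is shorter than $\gamma$ over that portion — and because $v \le c$, the length saved in $L$ translates, for the glued piece, into a genuine length saving in $M$ that is not swamped by reparametrization of the $r$-direction. Taking a limit of such competitors (or using the asymptotic estimate above to make the comparison rigorous over finite windows exhausting $\mathbb{R}$) contradicts that $\gamma$ is a line.

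I expect the main obstacle to be part (2): making the competitor-comparison argument fully rigorous, since a curve in $L$ shorter than $\gamma_2|_{[s_1,s_2]}$ must be re-coupled with an $r$-component to form an admissible competitor in $M$, and one needs to check that the warping with $v\le c$ does not allow the $r$-direction to absorb the savings. The clean way around this is to argue entirely with the length formula $\ell_M(\gamma_1,\sigma) = \int\sqrt{\dot\gamma_1^2 + v(\gamma_1)^2|\dot\sigma|^2_{g_L}}$: for \emph{fixed} $r$-component $\gamma_1$, this is strictly monotone in the pointwise values of $|\dot\sigma|^2_{g_L}$, so replacing $\gamma_2$ by a strictly shorter reparametrized curve with the same speed profile only where needed strictly decreases $\ell_M$; the delicacy is choosing the reparametrization of the competitor so that its $L$-speed is pointwise dominated appropriately, which is where I would spend the real work. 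The boundedness of $v$ is exactly what guarantees that "$\gamma_2$ has finite length on finite intervals but infinite total length", i.e. that the minimizing property of $\gamma_2$ on each finite window assembles into the statement that $\gamma_2$ is a line.
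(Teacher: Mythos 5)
Your part (1) is essentially the same as the paper's: you identify $\gamma_2$ as a pregeodesic and then argue by contradiction that a shorter curve in $L$, combined with the same $r$-component, would produce a shorter competitor in $M$. Both you and the paper leave the reparametrization of the competitor slightly informal; you acknowledge this, and the clean fix (parametrize the $L$-competitor proportionally to the $g_L$-arclength of $\gamma_2$, so that its $g_L$-speed is pointwise a fixed fraction of $|\dot\gamma_2|_{g_L}$) is routine.

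For part (2) there is a genuine gap, and it is exactly at the step you hedge on. You write that $v(\gamma_1)^2 |\dot\gamma_2|^2_{g_L}$ is ``nonincreasing or controlled,'' and try to deduce a positive lower bound on $|\dot\gamma_2|_{g_L}$. But $v^2 |\dot\gamma_2|^2_{g_L} = 1 - \dot\gamma_1^2$ is just the unit-speed constraint, not a consequence of the geodesic equations; it is bounded above by $1$, and this gives no lower bound on $|\dot\gamma_2|_{g_L}$ at all, since $\dot\gamma_1^2$ could in principle tend to $1$. The correct first integral for a warped-product geodesic has a fourth power: $(v\circ\gamma_1)^4\, g_L(\dot\gamma_2,\dot\gamma_2) \equiv C$ is constant along $\gamma$ (this follows from the pregeodesic equation $\nabla^L_{\dot\gamma_2}\dot\gamma_2 = -\tfrac{2 v'(\gamma_1)\dot\gamma_1}{v(\gamma_1)}\dot\gamma_2$ that you already wrote down, or from O'Neill). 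Since $\gamma_2$ is nonconstant, $C>0$, and since $v\le c$ we get $|\dot\gamma_2|^2_{g_L} = C/v^4 \ge C/c^4 > 0$, hence both branches of $\gamma_2$ have infinite $g_L$-length. Once you have this, the entire competitor-comparison argument that occupies most of your discussion of (2) is unnecessary: part (1) already shows that the image of $\gamma_2$ on every finite window is a minimizing geodesic, and combined with the infinite-length bound on each branch this is precisely the statement that the image of $\gamma_2$ is a line. So the correct proof of (2) is shorter than what you propose, and its one nontrivial ingredient is the conserved quantity with the right exponent, which your draft does not yet correctly state or use.
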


Note that $\gamma_2(s)$ itself will not necessarily be a geodesic because it will not be parametrized with constant speed. 

\begin{proof} 
First we  want to show that the image of $\gamma_2$ is a  length minimizing curve in $g_L$.  To see this, parametrize $\gamma$ such that $\gamma:[0,1] \rightarrow M$, then 
\[ length(\gamma) = |\dot{\gamma}(t) | = \sqrt{ |\dot{\gamma_1}(t)|^2_{g_{\mathbb{R}}} + v^2( \gamma_1(t)) | \dot{\gamma_2}(t)|^2_{g_L}} \]
Suppose that  $length_{g_L} (\gamma_2(t)) > d(\gamma_2(0), \gamma_2(1))$, and let $\sigma$ be a minimal geodesic in $L$ from $\gamma_2(0)$ to $\gamma_2(1)$.  Then 
\[ |\dot{\sigma}(t)| = length(\sigma) < length(\gamma_2) = \int_0^1 |\dot{\gamma_2}(t)| dt \]
In particular, there must be an open interval $(\alpha, \beta)$ with $|\dot{\sigma}(t)| <  |\dot{\gamma_2}(t)|$.  On $(\alpha, \beta)$ the curve $\overline{\gamma}(t) = (\gamma_1(t), \sigma(t))$ is clearly shorter than $\gamma|_{(\alpha, \beta)}$, which contradicts the fact that $\gamma$ is minimizing.

Now assume that $\gamma$ is a line.  From (1), in order to show that $\gamma_2$ is a line we just need to show that the length of both branches of $\gamma_2(s)$  as $s \rightarrow \infty$ and $s \rightarrow -\infty$ are infinite in $g_L$.  To see this we use the geodesic equations for the warped product, from which it follows (see \cite[Remark 39, p. 208]{O'Neill}) that the quantity $(v \circ \gamma_1)^4 g_L(\dot{\gamma_2}, \dot{\gamma_2}) = C$ for some constant $C$.  Since $v$ is bounded above, this implies that there is a universal constant $A$ not depending on $s$ such that  $g_L(\dot{\gamma_2}, \dot{\gamma_2})  \geq A$.  This implies that the length of both branches of $\gamma_2 $ in $L$ is infinite. 
\end{proof}

\begin{corollary} \label{Corollary:ProjectLines} For  the splitting given in Theorem  \ref{Theorem:IterativeSplitting},  any line in $(M,g)$  is constant on the $L$ factor. 
\end{corollary}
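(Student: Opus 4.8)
The plan is to apply Theorem \ref{Theorem:IterativeSplitting} together with Proposition \ref{Proposition:WPLines} in a straightforward way, the only subtlety being the bookkeeping of which warping factors are bounded. By Theorem \ref{Theorem:IterativeSplitting}, $M$ is diffeomorphic to $\mathbb{R}^k \times L$ with
\[ g = dr^2 + e^{\frac{2\phi(r)}{n-1}}\left( g_{\mathbb{R}^{k-1}} + g_L \right), \]
where $(L,g_L)$ has no lines, $f = \phi(r) + f_L$, and $f$ (hence $\phi$) is bounded above. Set $v(r) = e^{\frac{\phi(r)}{n-1}}$; since $\phi$ is bounded above, $v$ is bounded above. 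Writing the total fiber as $F = \mathbb{R}^{k-1} \times L$ with product metric $g_F = g_{\mathbb{R}^{k-1}} + g_L$, the metric $g$ is precisely a warped product $dr^2 + v^2(r) g_F$ of the form treated in Proposition \ref{Proposition:WPLines}.

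First I would take a line $\sigma$ in $(M,g)$ and decompose it as $\sigma(s) = (\sigma_1(s), \sigma_2(s))$ with $\sigma_1$ the $\mathbb{R}$-component and $\sigma_2$ the $F$-component. Applying Proposition \ref{Proposition:WPLines}(1), $\sigma_2$ is either constant or its image is a minimizing geodesic in $(F, g_F)$; and by part (2), if $\sigma_2$ is nonconstant then its image is a line in $(F, g_F)$. So it suffices to rule out the existence of a line in the product $(F, g_F) = (\mathbb{R}^{k-1} \times L, g_{\mathbb{R}^{k-1}} + g_L)$ that is nonconstant on the $L$ factor. Here I would invoke the elementary fact that in a Riemannian product, a geodesic is the product (suitably reparametrized by constant-speed splitting of the velocity) of geodesics in the factors, and it is a line if and only if each nonconstant factor component is a line — this is the standard observation used in the iterative Cheeger--Gromoll argument, and indeed is implicit in the proof of Theorem \ref{Theorem:IterativeSplitting}. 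Hence a line in $(F, g_F)$ projecting nonconstantly to $L$ would force $(L, g_L)$ to contain a line, contradicting the conclusion of Theorem \ref{Theorem:IterativeSplitting} that $(L, g_L)$ has no lines.

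Therefore $\sigma_2$ must be constant on the $L$ factor. It remains only to observe that the statement of the corollary asserts exactly this: any line in $(M,g)$ is constant on the $L$ factor (it may still move in the $\mathbb{R}$ and $\mathbb{R}^{k-1}$ directions, as the polar-coordinate example in the text already shows). I expect the main—and really only—obstacle to be the clean handling of the product step: Proposition \ref{Proposition:WPLines} is stated for a single warped fiber $F$, so I must be careful that the further splitting of $F$ into $\mathbb{R}^{k-1} \times L$ is an \emph{unwarped} Riemannian product, which it is, so that the classical fact about geodesics and lines in products applies verbatim with no warping correction. Once that is in place the argument is immediate.
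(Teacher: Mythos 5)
Your argument is correct and is exactly the intended one: the corollary is stated in the paper immediately after Proposition \ref{Proposition:WPLines} precisely so that it can be read off by applying that proposition to the warped product $dr^2 + e^{\frac{2\phi(r)}{n-1}}\bigl(g_{\mathbb{R}^{k-1}} + g_L\bigr)$ with fiber $F=\mathbb{R}^{k-1}\times L$ (using that $\phi$, hence $v$, is bounded above), and then using the standard fact about lines in an isometric product together with the absence of lines in $(L,g_L)$. The one point you flag---that $F$ is an unwarped Riemannian product, so the classical product-of-lines fact applies verbatim---is exactly the right bookkeeping, and your handling of it is correct.
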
 
On the other hand, for the metric \[  g = dr^2 + e^{\frac{2\phi(r)}{n-1}} g_{\mathbb{R}^{k-1}} + e^{\frac{2\phi(r)}{n-1}} g_{L},\]  the lines in the $\mathbb{R}^{k-1}$ factor will not necessarily lift to lines in $M$.  However, we can avoid this issue if we assume a two-sided bound on $f$, which will always be satisfied for the universal cover of a compact $CD(0,1)$ space. 

\begin{lemma} \label{Lemma:IterativeSplittingBounded}
Suppose that $(M,g,f)$ is $CD(0,1)$ with   $f$ is bounded (above and below) and contains a line,  then either $\phi$ is constant in  Theorem \ref{Theorem:IterativeSplitting} or $M$ is diffeomorphic to $\mathbb{R} \times L$ and  $g= dr^2 + e^{\frac{2\phi}{n-1}}g_L$ where $f= \phi + f_L$  and  $(L, g_L, f_L)$   has $(\mathrm{Ric}_{g_L})_{f_L}^0 > 0$. In particular, $(L, g_L)$ does not admit a line. 
\end{lemma}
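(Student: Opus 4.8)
The plan is to feed the two--sided bound on $f$ back into the split--space description already supplied by Theorem~\ref{Theorem:Splitting}, combined with the sharp fiber inequality of Proposition~\ref{Proposition:GradientExample}. First I would apply Theorem~\ref{Theorem:Splitting} to the given line (more precisely, Lemma~\ref{Lemma:Rigidity} together with Proposition~\ref{Prop:WarpedSplittingGradient}) to present $(M,g,f)$ as a split space: $M\cong \mathbb{R}\times L'$, with $g=dr^2+e^{2\phi(r)/(n-1)}g_{L'}$ and $f=\phi(r)+f_{L'}(x)$ for functions $\phi\colon\mathbb{R}\to\mathbb{R}$ and $f_{L'}\colon L'\to\mathbb{R}$. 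Evaluating $\inf_M f \le f \le \sup_M f$ at a fixed point of $L'$, and at a fixed value of $r$, immediately shows that $\phi$ is bounded above and below and that $f_{L'}$ is bounded above and below. Note this $\phi$ is exactly the warping function occurring in Theorem~\ref{Theorem:IterativeSplitting}, and that by Proposition~\ref{Proposition:GradientExample} the $CD(0,1)$ hypothesis says precisely that $(\mathrm{Ric}_{g_{L'}})^0_{f_{L'}} \ge c_1\, g_{L'}$ where $c_1 := \sup_r\left(\tfrac{1}{n-1}\,\phi''(r)\,e^{2\phi(r)/(n-1)}\right)$.

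Next I would run the dichotomy on $\phi$. If $\phi$ is constant, we are in the first alternative of the statement and there is nothing to prove. So suppose $\phi$ is non--constant. The key elementary point is that a non--constant $C^2$ function on $\mathbb{R}$ that is bounded below cannot be concave: if $\phi''\le 0$ everywhere and $\phi'(r_0)\ne 0$ for some $r_0$, then monotonicity of $\phi'$ forces $\phi\to-\infty$ on one side of $r_0$, contradicting the lower bound on $\phi$; hence $\phi'\equiv 0$. Therefore $\phi''(r_1)>0$ for some $r_1$, so $c_1 \ge \tfrac{1}{n-1}\,\phi''(r_1)\,e^{2\phi(r_1)/(n-1)}>0$, and consequently $(\mathrm{Ric}_{g_{L'}})^0_{f_{L'}} \ge c_1\, g_{L'}>0$. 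This upgrade from the a priori $CD(0,0)$ estimate on the fiber to a \emph{strict, uniform} positive lower bound is the only place the two--sided bound is genuinely used, and is the crux of the argument; the remaining steps are bookkeeping with results already in the excerpt.

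Finally I would rule out lines in $L'$. Since $(\mathrm{Ric}_{g_{L'}})^0_{f_{L'}}\ge 0$, the triple $(L',g_{L'},f_{L'})$ is $CD(0,0)$, and $f_{L'}$ is bounded above; so if $L'$ contained a line, Corollary~\ref{Corollary:Splitting} (with $N=0<1$) would give an isometric splitting $L'=\mathbb{R}\times L''$ with $f_{L'}$ a function on $L''$ only. Writing $\partial_t$ for the unit field tangent to that $\mathbb{R}$ factor, one has $\mathrm{Ric}_{g_{L'}}(\partial_t,\partial_t)=0$, $\mathrm{Hess}_{g_{L'}}f_{L'}(\partial_t,\partial_t)=0$ and $df_{L'}(\partial_t)=0$, hence $(\mathrm{Ric}_{g_{L'}})^0_{f_{L'}}(\partial_t,\partial_t)=0$, contradicting $(\mathrm{Ric}_{g_{L'}})^0_{f_{L'}}\ge c_1\,g_{L'}>0$. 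Thus $L'$ admits no line, so the iteration of Theorem~\ref{Theorem:IterativeSplitting} splits off no further Euclidean factor: $k=1$, $L=L'$, $g=dr^2+e^{2\phi(r)/(n-1)}g_L$, $f=\phi+f_L$, and $(\mathrm{Ric}_{g_L})^0_{f_L}\ge c_1\,g_L>0$; the last of these is exactly the claimed statement that $(L,g_L)$ carries no line.
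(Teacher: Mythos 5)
Your proof is correct and follows essentially the same route as the paper: present $(M,g,f)$ as a split space, invoke the sharp inequality of Proposition~\ref{Proposition:GradientExample}, and observe that a bounded concave function on $\mathbb{R}$ must be constant, so a non-constant $\phi$ forces $\phi''>0$ somewhere and hence a strictly positive lower bound on $(\mathrm{Ric}_{g_{L}})^0_{f_{L}}$. Your treatment is slightly more detailed than the paper's (you state the contrapositive explicitly and spell out the ``no lines in $L$'' step via Corollary~\ref{Corollary:Splitting}, which the paper leaves implicit), but the underlying argument is identical.
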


\begin{proof} Split $ g = dr^2 + e^{\frac{2\phi}{n-1}}g_{L'}$, $f = \phi + f_{L'}$ as in the proof of  Theorem \ref{Theorem:IterativeSplitting}.  We claim if $\phi$ is non-constant then  $(\mathrm{Ric}_{g_{L'}})_{f_{L'}}^0 > 0$.  If $(\mathrm{Ric}_{g_{L'}})_{f_{L'}}^0(V,V)$ was not positive for some choice of $V$, then by Proposition \ref{Proposition:GradientExample}, $\frac{\partial^2 \phi} {\partial r^2} \leq 0$.  Since $f$ is a bounded function this implies that $\phi$ is bounded and concave function of $r$, so it  must be constant.
\end{proof}
Now we turn our attention to applications  of the splitting theorem to spaces with symmetry and the fundamental group.      These  come from studying  the isometry group of non-compact spaces which are $CD(0,1)$ with $f$ bounded that admit a line.    

When $\phi$ is constant we are in the  case considered by Cheeger-Gromoll where we have a product metric $g = g_{\mathbb{R}^k} + g_L$ where $L$ admits no lines.  The main observation is that isometries $F$ of $g$ must take lines to lines.  This implies that $F$  preserves the distributions tangent to $\mathbb{R}^k$ and $L$ in $M$.  Thus, $F$ splits into $F= F_1 \times F_2$ where $F_1\in \mathrm{Isom}(\mathbb{R}^k)$ and $F_2 \in  \mathrm{Isom}(L, g_L)$. 

When $\phi$ is not constant we obtain a similar result.  By Lemma \ref{Lemma:IterativeSplittingBounded} and Corollary \ref{Corollary:ProjectLines} we have $g = dr^2 + e^{\frac{2\phi}{n-1}}g_L$ and the only line for the $g$ metric  is the one in the $r$-direction.  Thus, since isometries take lines to lines,  we also have that $F$ splits as $F_1 \times F_2$ where $F_1: \mathbb{R} \rightarrow \mathbb{R}$ and $F_2: L \rightarrow L$, moreover,  simple calculation shows that for any isometry of this form for a warped product  we must have  $F_1 \in \mathrm{Isom}(\mathbb{R})$  with  $\phi \circ F_1 = \phi $ and $F_2 \in  \mathrm{Isom}(L, g_L)$ (see Exercise 11 on page 214 of \cite{O'Neill}). 

Now we can apply these results to the universal cover of a compact space which is $CD(0,1)$. 

\begin{theorem} \label{Theorem:CompactUniversalCover} Let $(M,g,f)$ be compact and $CD(0,1)$, let $(\widetilde{M}, \widetilde{g}, \widetilde{f})$ be the universal cover of $M$ with the covering metric $\widetilde{g}$ and $\widetilde{f}$ the pullback of $f$ to $\widetilde{M}$.  Then either 
\begin{enumerate}
\item $\widetilde{M}$ is compact, 
\item $(\widetilde{M}, \widetilde{g})$ is isometric to a product of a flat metric on $\mathbb{R}^k$ and a compact manifold $L$. 
\item $\widetilde{M}$ is diffeomorphic to $\mathbb{R} \times L$ where $L$ is compact and  $ \tilde{g} = dr^2 + e^{\frac{2\phi}{n-1}}g_L$, $\tilde{f} = \phi + f_L$,  and  $(L, g_L, f_L)$  is $CD(K, 0)$ for some $K>0$. 
\end{enumerate}
\end{theorem}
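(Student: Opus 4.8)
The plan is to analyze the deck transformation group $\Gamma = \pi_1(M)$ acting on $\widetilde{M}$ and apply the structure theory just developed. First I would observe that since $M$ is compact, $\widetilde{f}$ is bounded (above and below), so $(\widetilde{M},\widetilde{g},\widetilde{f})$ is $CD(0,1)$ with bounded potential. If $\widetilde{M}$ contains no line, then by a standard argument (a deck transformation is a fixed-point-free isometry, and on a noncompact complete manifold one produces a line from an infinite orbit via the Cheeger--Gromoll construction of an axis-type minimizing limit geodesic), $\widetilde{M}$ must in fact be compact, giving case (1). So assume $\widetilde{M}$ is noncompact and hence contains a line. Then Theorem \ref{Theorem:IterativeSplitting} together with Lemma \ref{Lemma:IterativeSplittingBounded} gives two possibilities: either $\phi$ is constant, so $\widetilde{g} = g_{\mathbb{R}^k} + g_L$ with $L$ having no lines; or $\phi$ is non-constant and $\widetilde{M} = \mathbb{R} \times L$ with $\widetilde{g} = dr^2 + e^{\frac{2\phi}{n-1}}g_L$ and $(\mathrm{Ric}_{g_L})^0_{f_L} > 0$.

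Next I would show $L$ is compact in both cases. This is where the deck group enters via the product structure of isometries. In the flat case $\phi$ constant, every $F \in \Gamma$ splits as $F_1 \times F_2$ with $F_1 \in \mathrm{Isom}(\mathbb{R}^k)$, $F_2 \in \mathrm{Isom}(L,g_L)$, as explained in the paragraph preceding the theorem; the image $\Gamma_2$ of the projection $\Gamma \to \mathrm{Isom}(L)$ acts on $L$, and since $\widetilde{M}/\Gamma = M$ is compact while the $\mathbb{R}^k$ factor contributes a ``Euclidean'' piece, a Bieberbach-type argument shows $L/\Gamma_2$ is compact; but $L$ has no lines and $(L,g_L,f_L)$ is $CD(0,1-k)$ with bounded potential, and a complete noncompact such $L$ with a cocompact isometry group would contain a line (same Cheeger--Gromoll axis argument), contradiction — so $L$ is compact, giving case (2). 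In the warped case $\phi$ non-constant, again every $F \in \Gamma$ splits as $F_1 \times F_2$ with $F_1 \in \mathrm{Isom}(\mathbb{R})$ satisfying $\phi \circ F_1 = \phi$; since $\phi$ is bounded and non-constant it is not periodic and not invariant under any nontrivial translation, so $F_1 = \pm\mathrm{id}$, forcing the $\mathbb{R}$-orbits of $\Gamma$ to be bounded; hence $L = \widetilde{M}/(\text{bounded } \mathbb{R}\text{-motion})$ modulo $\Gamma_2$ is compact, and $(L,g_L,f_L)$ has $(\mathrm{Ric}_{g_L})^0_{f_L} > 0$ on a compact manifold, so by compactness the infimum $K := \inf (\mathrm{Ric}_{g_L})^0_{f_L} > 0$ is attained and positive, i.e. $(L,g_L,f_L)$ is $CD(K,0)$ for some $K > 0$, giving case (3).

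Finally I would assemble these into the trichotomy exactly as stated: case (1) if $\widetilde{M}$ is compact, and otherwise case (2) or (3) according to whether $\phi$ is constant, with the compactness of $L$ and (in case (3)) the positive lower bound $K$ established above.

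I expect the main obstacle to be the step forcing $L$ compact — specifically, making rigorous that the deck group $\Gamma$ acts cocompactly on the fiber $L$ after quotienting out the $\mathbb{R}^k$ (resp. $\mathbb{R}$) factor, and then deriving a contradiction with the absence of lines in $L$. The cleanest route is probably: the quotient $M$ being compact means there is a compact fundamental domain; projecting to $L$ and using that the $\mathbb{R}$-component of each $F \in \Gamma$ is an isometry of $\mathbb{R}$ with $\phi\circ F_1 = \phi$ (hence $F_1$ is trivial or a reflection, in the warped case) or lies in the Euclidean group (in the flat case, where a Bieberbach/soul argument bounds the noncompactness to the $\mathbb{R}^k$ factor only), one concludes the $L$-projection of $M$ is itself compact; then a complete noncompact $L$ carrying a cocompact group of isometries necessarily contains a line, contradicting Lemma \ref{Lemma:IterativeSplittingBounded} and Corollary \ref{Corollary:ProjectLines}. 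All the geometric inputs — splitting of warped-product isometries, projection of lines, the $CD$ bookkeeping under taking fibers — are already in hand from the preceding sections, so the argument is a matter of organizing the covering-space and cocompactness bookkeeping carefully.
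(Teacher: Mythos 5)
There is a genuine gap in the warped (case (3)) branch of your argument. You assert that because $\phi$ is bounded and non-constant it is ``not periodic and not invariant under any nontrivial translation,'' hence every $F_1$ is $\pm\mathrm{id}$ and the $\mathbb{R}$-orbits of the deck group are bounded. This is false: a bounded non-constant function on $\mathbb{R}$ can of course be periodic, and the paper's remark immediately following the theorem gives exactly such an example ($\phi$ periodic, producing a $CD(0,1)$ metric on $S^1\times L$). Worse, if your claim were correct then the deck action on the $\mathbb{R}$-factor would be by $\pm\mathrm{id}$ alone, so $\widetilde M/\Gamma$ would fiber over the non-compact space $[0,\infty)$, contradicting compactness of $M$ — i.e.\ your chain of reasoning would actually prove that case (3) never occurs, which the example refutes. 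The correct source of cocompactness on $L$ is more prosaic: since deck transformations split as $F_1\times F_2$ (because the only line in $\widetilde M$ is the $r$-direction line, by Lemma \ref{Lemma:IterativeSplittingBounded} and Corollary \ref{Corollary:ProjectLines}), projecting a compact fundamental domain to $L$ shows $\Gamma_2$ acts cocompactly, with no assumption on the orbit of the $\mathbb{R}$-coordinate.

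Beyond this, your overall route differs from the paper's. You propose to produce a line directly in $L$ by the ``cocompact action on a non-compact complete manifold yields a line'' argument. The paper instead works inside $\widetilde M$: it takes minimal geodesics $\gamma^j$ in $\widetilde M$ joining $(0,p)$ to $(0,x_j)$ with $x_j\to\infty$ in $L$, uses the warped-product conservation law $e^{\frac{4\phi(\gamma^j_1)}{n-1}}|\dot\gamma^j_2|_{g_L}^2 = C_j$ and the critical point of $\gamma^j_1$ to bound $|\dot\gamma^j_2|_{g_L}$ uniformly below, passes to a ray with the same lower bound, translates by deck isometries to a line $\sigma$ in $\widetilde M$ with $|\dot\sigma_2|\geq A$, and finally invokes Proposition \ref{Proposition:WPLines} to turn the image of $\sigma_2$ into a line in $L$. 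The reason the paper makes this detour (rather than working in $L$ directly) is precisely the complication it flags: geodesics of $L$ do not lift to geodesics of $\widetilde M$, and the warping is carried entirely by the geodesic equations of the ambient manifold. Your proposal's final paragraph anticipates the ``cleanest route'' is the cocompactness-on-$L$ argument, and that route can be made to work; but as written, the crucial step is supported by the false periodicity claim, and the correct justification and the correct delicacy (why one cannot just run Cheeger--Gromoll inside $L$ carelessly) are both missing.
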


Note that Case (3) can certainly occur, as a metric  of the form $ \tilde{g} = dr^2 + e^{\frac{2\phi}{n-1}}g_L$ with $\phi$ periodic and $f=\phi$  will cover a $CD(0,1)$ metric on $S^1 \times L$,

\begin{proof}
Assume (1) is not true so that $\widetilde{M}$ is non-compact.   A standard argument shows that $\widetilde{M}$ contains a line.  To see this take a ray $\gamma$ in $M$ and  let $t_i \rightarrow \infty$.  Then, since the deck transformations of $\widetilde{M}$ act by isometries of $\widetilde{g}$ there is a compact set $K$ (e.g. a fundamental domain) and a sequence of isometries $F_i$ such that $F_i(\gamma(t_i)) \in K$ for all $i$.  Let $p$ be a limit of a convergent subsequence of the $F_i(\gamma(t_i))$. For some further subsequence we also have $DF_i(\dot{\gamma}(t_i))$ converging to a unit vector $v \in T_pM$.  Let $\sigma$ be the geodesic with $\sigma(0) = p$ and $\dot{\sigma}(0) = v$ then, since the distance that a geodesic minimizes is continuous with respect to its initial conditions, $\sigma$ is a line. 

We can then split $ \tilde{g} = dr^2 + e^{\frac{2\phi}{n-1}}g_L$, $\tilde{f} = \phi + f_L$.  If $\phi$ is constant, then we can split $M$ into $\mathbb{R}^k \times L$ where $L$ contains no lines.  If $L$ is non-compact, then the argument above, using the fact that the isometries of $\mathbb{R}^k \times L$  must split, would produce a line in $L$, therefore $L$ must be compact in this case, and we obtain (2). 

 Now suppose that $\phi$ is not constant. We  need to show that $L$ is compact. By Lemma \ref{Lemma:IterativeSplittingBounded} $L$ does not contain any lines.  The idea is to assume that $L$ is non-compact and argue by contradiction that $L$ must then contain a line.  This is complicated by the fact that geodesics of $L$ do not necessarily lift to geodesics of $\widetilde{M}$, so we must use the geodesic equations of a warped product again. 

 Fix $p \in L$ and  let $x_j$ be a sequence going off to infinity in $L$.  Let $\gamma^{j}$ be a unit speed minimal geodesic in $\widetilde{M}$ from $(0,p)$ to $(0, x_j)$.  Write $\gamma^{j}(t) = (\gamma^{j}_1(t), \gamma^{j}_2(t))$, using the warped product geodesic equations again  as above we see there is a  constant $C_j$ such that $e^{\frac{2\phi(\gamma^{j}_1(t))}{n-1}} |\dot{\gamma}^{j}_2(t) |_{g_L} = C_j $.   Since $\gamma_1 :(a,b) \rightarrow \mathbb{R}$ must have $\gamma_1(a) = \gamma_1(b) = 0$, it must have a critical point,  $t_0$.  At that point, 
 \[ 1 = |\dot{\gamma}^j|_{g} = e^{\frac{\phi(\gamma^{j}_1(t_0))}{n-1} }|\dot{\gamma}^{j}_2(t_0)|_{g_L}\]
 so $C_j = e^{\frac{\phi(\gamma^{j}_1(t_0))}{n-1}}$.  Since $\phi$ is bounded, this implies that $C_j$ is bounded.  Then  there is a positive constant  $A$ such that $|\dot{\gamma}^{j}_2(t) |_{g_L} \geq A$ for all $j$ and $t$.  
 
 Now consider $\gamma$ a ray which is a sub-sequential limit of $\gamma^{j}$.  Write $\gamma(t) = (\gamma_1(t), \gamma_2(t))$.   Since $|\dot{\gamma}^{j}_2(t) |_{g_L} \geq A$ we have that $|\dot{\gamma}_2(0) |_{g_L} \geq A$.    Using the geodesic equations for a warped product in the same way as above,  we obtain a possibly different $A$ such that $|\dot{\gamma}_2(t) |_{g_L} \geq A$ for all $t$.  Now take a sequence $\gamma(t_i)$ with $t_i \rightarrow \infty$ and pull back $\gamma$ by isometries $F^i$ to produce a line $\sigma$ as before.  Since each $F^i$ splits as a map $F^i_1 \times F^i_2$ where $F^i_2$ is an isometry of $L$, if we write $\sigma(s) = (\sigma_1(s), \sigma_2(s))$ then $|\dot{\sigma}_2(s) |_{g_L} \geq A$ for all $s$.  Therefore $\sigma_2$ is not a constant map so by Proposition \ref {Proposition:WPLines} the image of $\sigma_2$ forms a line in $L$  which achieves the desired contradiction. 
\end{proof}
We now use  Theorem \ref{Theorem:CompactUniversalCover} to prove Theorem \ref{Thm:FundGroup}. 

\begin{proof}[Proof of Theorem \ref{Thm:FundGroup}] 
First we show (3).  From the proof of Theorem \ref{Theorem:Splitting} using Lemma \ref{Lemma:Rigidity},  if $\widetilde{M}$ contains a line, then at every point $p$  there is a vector  $V \in T_pM$ with $\mathrm{Ric}_f^1(V,V) = 0$.  Thus by Theorem \ref{Theorem:CompactUniversalCover},  if $\mathrm{Ric}_f^1>0$ at a point, then $\widetilde{M}$ must be compact and $\pi_1(M)$ is finite. 

 Identify $\pi_1(M)$ as a subgroup of the isometries of $\widetilde{M}$ acting properly discontinuously and freely on $\widetilde{M}$.  Then as we discuss above, for  $F\in \pi_1(M)$ we can write $F = F_1\times  F_2$ where $F_2 \in \mathrm{Isom}(L, g_L)$ and, by Theorem \ref{Theorem:CompactUniversalCover}, $F_1$ is in the isometry group of flat $\mathbb{R}^k$ ($k=1$ when $\phi$ is non-constant).  The projection of $\pi_1(M)$ into each factor then produces a short  exact sequence  
\[ 0 \rightarrow E \rightarrow \pi_1(M) \rightarrow \Gamma  \rightarrow 0\]
Where $\Gamma$ is a crystallographic group, i.e. a discrete, cocompact subgroup of the isometry group of $\mathbb{R}^k$ and $E$ is a finite group. By \cite[Theorem 2.1]{Wilking} $\pi_1(M)$ is then the fundamental group of a compact manifold of nonnegative sectional curvature.  

Finally if $b_1(M) = n$ then $k$ must be $n$ and then $\widetilde{M}$ must be flat, implying that $M$ is also flat. 
\end{proof}
Finally we show that $\phi$ must be constant when $M$ is locally homogeneous. 

\begin{proof}[Proof of Theorem \ref{Theorem:LocallyHomogeneous}]

Let $\widetilde{M}$ be the universal cover of a compact  locally homogeneous space $M$.     Then $\widetilde{M}$ is homogeneous.  Apply Theorem \ref{Theorem:CompactUniversalCover} and suppose that $\phi$ were not constant. Then,  since the  isometry group splits and acts transitively, between any two points in $\mathbb{R}$  there must be a reflection or translation $F_1$  such that $\phi =  \phi \circ F_1$.   This implies that $\phi$ must be constant.  Then from Theorem  \ref{Theorem:CompactUniversalCover} $\widetilde{M} = \mathbb{R}^k \times L$ where $L$ is compact and homogeneous.  However, by  \cite[Proposition 3.7]{KennardWylie},  $L$ must then also  have non-negative Ricci curvature. Then $M$ has non-negative Ricci curvature.  The rest of the structure then follows  from \cite[Theorem 5]{CheegerGromoll}. 
\end{proof}

\section{Manifolds with boundary}
In this section, we prove  a version of the splitting theorem for compact manifolds with boundary. The boundary $\partial M$ is also assumed to be smooth with outward unit normal $\nu$.   Let $H$ be the mean curvature of $\partial M$ with respect to the outward normal vector.  The weighted (or generalized) mean curvature of the boundary is $H_f = H - g(\nabla f, \nu)$.  Just as the usual mean curvature arises in the first variation of the Riemannian volume, the weighted mean curvature arises in the first variation of the measure $e^{-f} d\mathrm{vol}_g$.

We have the following splitting phenomenon. 

\begin{theorem} \label{Theorem:Boundary} Suppose that $(M,g,f)$  is a compact manifold with boundary which is $CD(0,1)$, if $H_f \geq 0$ ($M$ is generalized mean convex) and $M$ has more than  one boundary component,  then $M$ is a warped product over an interval $M = [a,b] \times L$, $f = \phi(r) + f_L(x)$, where $\phi: [a,b] \rightarrow \mathbb{R}$ and $f_L: L \rightarrow \mathbb{R}$, and $g_M = dr^2 + e^{\frac{2\phi(r)}{n-1}}g_{L}$ for a fixed metric $g_L$ on $L$.   
\end{theorem}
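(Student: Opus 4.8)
The plan is to mimic the non-compact proof (Theorem~\ref{Theorem:Splitting}) but replacing Busemann functions by distance functions to the two boundary components. First I would assume, without loss of generality, that $\partial M$ has exactly two components $\Sigma_0$ and $\Sigma_1$ (if there are more, the argument applied to one pair will already force a warped product structure whose level sets foliate $M$, contradicting the existence of a third component). Let $r(x) = d(x, \Sigma_0)$. Using the weighted Laplacian comparison for $CD(0,1)$ spaces (the Theorem following Lemma~\ref{Lem:Bochner}), and the hypothesis $H_f \geq 0$ on $\Sigma_0$ as the initial condition for the Riccati comparison along normal geodesics, I would show $\Delta_f r \leq 0$ in the barrier sense on all of $M \setminus \Sigma_0$. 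Symmetrically, setting $\rho(x) = d(x, \Sigma_1)$, one gets $\Delta_f \rho \leq 0$.

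Next I would run the maximum-principle argument: on the interior, $r + \rho \geq d(\Sigma_0,\Sigma_1) =: \ell$ everywhere, with equality exactly along minimizing segments running from one component to the other. The function $r+\rho$ is $f$-superharmonic in the barrier sense and attains its minimum $\ell$ (this needs the observation that some minimizing geodesic between the two components realizes the distance and along it $r+\rho \equiv \ell$); by the strong maximum principle for $\Delta_f$, $r + \rho \equiv \ell$ on all of $M$. Hence $r = \ell - \rho$ is both $f$-subharmonic and $f$-superharmonic, so $\Delta_f r = 0$, and then elliptic regularity makes $r$ smooth on the interior, with $|\nabla r| = 1$ there (the standard argument: $r$ can have no interior critical points since it is a smooth solution squeezed between the two distance functions, each of which has unit gradient a.e.).

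With a smooth function $r$ on the interior satisfying $|\nabla r|^2 = 1$ and $\Delta_f r = 0$, I can invoke Lemma~\ref{Lemma:Rigidity}: the metric on $\mathrm{int}(M)$ is a warped product $dr^2 + e^{2\phi(r)/(n-1)} g_L$ with $f = \phi(r) + f_L$. The boundary components $\Sigma_0, \Sigma_1$ are then the level sets $r = 0$ and $r = \ell$, and $M = [0,\ell] \times L$ with $L$ compact. One should check that the warped-product structure extends smoothly up to the boundary: since $r$ is the distance to a smooth hypersurface and the metric is smooth on the closed manifold, the level-set foliation and the coordinates are smooth up to $r=0$ and $r=\ell$, and $\phi, f_L$ inherit the required regularity; this is routine. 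The identification of $g_L$ as the induced metric on $\Sigma_0$ (rescaled) then gives the stated form.

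\textbf{Main obstacle.} The delicate point is the boundary behavior of the distance function and the Laplacian comparison \emph{starting from a hypersurface} rather than a point: I need the Riccati inequality $\dot\lambda \leq -\lambda^2/(v^2(n-1))$ for $\lambda = (v^2 \Delta_f r)\circ\gamma$ along geodesics normal to $\Sigma_0$, with initial value $\lambda(0) = v^2(0) H_f(\gamma(0)) \geq 0$ rather than $\lambda \to +\infty$; monotonicity of the Riccati solution then keeps $\lambda \leq 0$ for all $t > 0$, which is exactly $\Delta_f r \leq 0$. Setting this up correctly (including the barrier argument at points in the cut locus of $\Sigma_0$, and checking the sign conventions so that generalized mean convexity is the right inequality) is where the real work lies; everything downstream is a transcription of the closed-manifold proof.
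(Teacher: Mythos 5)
Your proposal follows the same route as the paper: distance functions to the two boundary components, the weighted Laplacian comparison from Lemma~\ref{Lem:Bochner} with the mean-convexity hypothesis as the initial condition in the Riccati inequality, the strong maximum principle applied to $r+\rho$, elliptic regularity to upgrade $r$ to a smooth solution of $\Delta_f r = 0$, and then Lemma~\ref{Lemma:Rigidity}. One sign should be fixed: as $t\to 0$ along an inward normal geodesic one has $\Delta_f r(\gamma(t)) \to -H_f(\gamma(0))$, so $\lambda(0) = -v^2(0)H_f(\gamma(0)) \leq 0$ (not $\geq 0$), and then the Riccati inequality $\dot\lambda \leq -\lambda^2/(v^2(n-1)) \leq 0$ keeps $\lambda \leq 0$, which is exactly the step you flag as needing care.
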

The warped products in the  conclusion of the theorem have $H_f \equiv 0$ on $\partial M$ and $\mathrm{Ric}_f^1\left( \frac{\partial}{\partial r}, \frac{\partial}{\partial r} \right) = 0$, so we have the following corollary.

\begin{corollary}  Suppose that $(M,g,f)$  is a compact manifold with boundary which is $CD(0,1)$ and $M$ is generalized mean convex.    If $\mathrm{Ric}_f >0$ at a point in the interior of $M$, or $H_f >0$ at a point in $\partial M$, then $M$ has only one boundary component. 
\end{corollary}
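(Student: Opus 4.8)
The plan is to argue by contraposition: I assume $M$ has more than one boundary component and show that neither of the positivity conditions can hold. Since $(M,g,f)$ is a compact $CD(0,1)$ manifold with smooth, generalized mean convex boundary and at least two boundary components, Theorem \ref{Theorem:Boundary} applies verbatim and produces a warped product splitting $M = [a,b]\times L$ with $g = dr^2 + e^{2\phi(r)/(n-1)} g_L$ and $f = \phi(r) + f_L(x)$, $\phi\colon[a,b]\to\mathbb{R}$.

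Next I would invoke the two structural identities recorded immediately after Theorem \ref{Theorem:Boundary}: for such a warped product one has $H_f \equiv 0$ on all of $\partial M = (\{a\}\cup\{b\})\times L$, and $\mathrm{Ric}_f^1\!\left(\frac{\partial}{\partial r},\frac{\partial}{\partial r}\right) = 0$ at every point (both following from Proposition \ref{Proposition:Computation} and Proposition \ref{Prop:WarpedSplittingGradient}, exactly as for split spaces over $\mathbb{R}$). The first identity immediately rules out $H_f > 0$ anywhere on $\partial M$. For the interior statement, let $p$ be an interior point, so $r(p)\in(a,b)$ and $\frac{\partial}{\partial r}\big|_p$ is a genuine unit vector of $g$. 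Using $\mathrm{Ric}_f^1 = \mathrm{Ric} + \mathrm{Hess} f + \frac{df\otimes df}{n-1}$ together with $df\big(\frac{\partial}{\partial r}\big)\big|_p = \phi'(r(p))$ and $\mathrm{Ric}_f^1\big(\frac{\partial}{\partial r},\frac{\partial}{\partial r}\big)\big|_p = 0$, I obtain
\[ \mathrm{Ric}_f\!\left(\tfrac{\partial}{\partial r},\tfrac{\partial}{\partial r}\right)\Big|_p = (\mathrm{Ric} + \mathrm{Hess} f)\!\left(\tfrac{\partial}{\partial r},\tfrac{\partial}{\partial r}\right)\Big|_p = -\frac{\phi'(r(p))^2}{n-1} \le 0, \]
so $\mathrm{Ric}_f$ is not positive definite at $p$, and hence no interior point carries positive weighted Ricci curvature. (If $\mathrm{Ric}_f$ in the statement is read as $\mathrm{Ric}_f^1$, then $\mathrm{Ric}_f^1\big(\frac{\partial}{\partial r},\frac{\partial}{\partial r}\big) = 0$ contradicts positivity even more directly.) This establishes the corollary.

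I do not expect a genuine obstacle here: the corollary is a formal consequence of Theorem \ref{Theorem:Boundary} and the elementary warped product curvature computations of Section 2. The only point requiring a moment of care is ensuring that the distinguished direction $\frac{\partial}{\partial r}$ is actually available where positivity is hypothesized --- which is precisely why the Ricci hypothesis is imposed at an interior point, and why the boundary hypothesis is phrased through $H_f$, the quantity that vanishes identically on the boundary of a warped product of this form.
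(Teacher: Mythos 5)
Your argument is correct and is exactly the one the paper intends: the text immediately preceding the corollary records that the warped products in the conclusion of Theorem \ref{Theorem:Boundary} satisfy $H_f \equiv 0$ on $\partial M$ and $\mathrm{Ric}_f^1\bigl(\frac{\partial}{\partial r},\frac{\partial}{\partial r}\bigr)=0$, and your proposal is simply the contrapositive spelled out. Your extra observation that $\mathrm{Ric}_f^\infty\bigl(\frac{\partial}{\partial r},\frac{\partial}{\partial r}\bigr) = -\phi'^2/(n-1)\le 0$ is a nice touch that covers either reading of the (unsuperscripted) $\mathrm{Ric}_f$ in the statement, but it does not change the route.
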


By the same argument as in Corollary \ref{Corollary:Splitting} we also have an isometric product in the conclusion of Theorem \ref{Theorem:Boundary} if $(M,g,f)$ is $CD(0,N)$ for $N<1$. Using the ideas in the next section, Theorem \ref{Theorem:Boundary} can also  be extended to non-gradient fields, we leave the statement to the interested reader. 

\begin{proof}[Proof of Theorem \ref{Theorem:Boundary}.]  
Let $L$ be a boundary component of $M$ and let $r$ be the distance to $L$.  Let $\gamma$ be a unit speed geodesic from $L$ to a point $x \in M$ which minimizes the distance from $x$ to $L$ and such that $\gamma(t)$, $t>0$ is contained in the interior of $M$.  Then applying Lemma  \ref{Lem:Bochner} to $r$ gives 
\[    D_{\nabla r} (v^2 \Delta_f r(\gamma(r))) \leq 0
\]
Moreover, we have that $ \Delta_f r (\gamma(r)) \rightarrow - H_f(\gamma(0))$ as $r \rightarrow 0$, so we have that  $\Delta_f r\leq 0$ along the geodesic $\gamma$.

Now let $L_1$ be a component of $\partial M$.  Let $L_2$ be another boundary component which minimizes the distance from $L_1$ to $L_2$ among the other boundary components.     let $r_1, r_2 $ be the distance functions to $L_1$ and $L_2$ respectively.  Consider the function $e(x) = r_1(x) + r_2(x)$. By the triangle inequality $e(x) \geq d(L_1, L_2)$ and the points where the minimum is achieved must lie on a geodesic $\gamma$ which connects $L_1$ and $L_2$ and only touches $\partial M$ at its endpoints.  By the argument above  $\Delta_f e = \Delta_f r_1 + \Delta_f r_2  \leq 0$ at such a minimal point.  This implies that  $e$ must be constant by the strong maximum principle. Then there is a constant $a$ so that $r_1 = a - r_2$, which implies that  $\Delta_f r_1 = 0$. By elliptic regularity this shows that $r_1$ is smooth on the interior of $M$.   Then $r_1$ is a smooth function with  $|\nabla r_1|^2=1$  and $\Delta_f(r_1) = 0$.  The argument in Lemma \ref{Lemma:Rigidity} then shows that $M$ is a warped product. 
\end{proof}

\section{Non-gradient Vector fields}

In this section we explain how the results above also have versions for non-gradient potential fields.  Curvature dimension inequalities have a well known definition for vector fields. 

\begin{definition}
Let $X$ be a vector field on a Riemannian metric $(M^n,g)$. The $N$-dimensional generalized Ricci tensor is 
\[ \mathrm{Ric}_X^N  = \mathrm{Ric} + \frac{1}{2} L_X g - \frac{X^{\sharp} \otimes X^{\sharp}}{N-n} \]
where $L_X g$ is the Lie derivative of $g$  with respect to  $X$ and $X^{\sharp}$ is the dual one form of $X$ coming from $g$.  We say that  $(M,g, X)$ is  $\mathrm{CD}(\lambda, N)$, ($\lambda \in \mathbb{R}, N \in (-\infty, \infty]$) if   $\mathrm{Ric}_X^N \geq \lambda$. 
\end{definition}

Note that with this definition $\mathrm{Ric}_f = \mathrm{Ric}_{\nabla f}$, so the results of this section should be viewed as generalizing our results in section 3 to non-gradient fields.  

All of our results in the gradient case involve bounds on the potential function $f$.  While there is no potential function for a non-gradient field, we can still make sense of bounds by integrating $X$ along geodesics.   Let $X$ be a vector field on a Riemannian manifold $(M,g)$.  Let $\gamma:(a, b) \rightarrow M$ be a geodesic that is parametrized by arc-length.  Define 
\[ f_{\gamma} (t) = \int_a^t  g(\dot{\gamma}(s), X(\gamma(s))) ds \]
$f_{\gamma}$ is a real valued function  on the interval $(a,b)$ with the property that  $\dot{f_{\gamma}}(t) =  g(\dot{\gamma}(t), X(\gamma(t)))$.  When $X = \nabla f$ is a gradient field then $f_{\gamma} = f(\gamma(t)) - f(\gamma(a))$, in the non-gradient case we think of $f_{\gamma}$ as being the anti-derivative of $X$ along the curve $\gamma$.  We now introduce the  condition we will need for results in this section.  

\begin{definition} Let $(M,g)$ be a smooth non-compact complete Riemannian manifold with a smooth vector field $X$.   Then we say $(M, g,X)$ is  $X$-\emph{complete} if for every point $y \in M$ 
\[ \displaystyle \limsup_{r \rightarrow \infty} \inf_{l (\gamma) = r} \left\{\int_0^r e^{\frac{-2f_{\gamma}(\gamma(s))}{n-1}} ds  \right\}  = \infty.\]
where the infimum is taken over all  minimizing unit speed geodesics $\gamma$ of the metric $g$ with $\gamma(0) = y$. If $X=\nabla f$ we say that $(M,g,f)$ is $f$-complete.
\end{definition}

In general, $f_{\gamma}$ depends on the parametrization of $\gamma$ only up to an additive constant, so the notion of $X$-completeness does not depend on the parametrization of the geodesic.  Also note that if a vector field $X$ has the property that  $f_{\gamma}$ is  bounded for all unit speed minimizing geodesics then it is $X$-complete.  However, even in the gradient case, $f$-completeness is a weaker condition than $f$ bounded above. 

One way to interpret $f$-completeness is that the quantity $\int_0^r e^{\frac{-2f_{\gamma}(\gamma(s))}{n-1}} ds$ is, up to a multiplicative factor,  the energy of the curve $\gamma$ in the conformal metric $e^{\frac{-2f}{(n-1)}} g$.  From this we can see that $f$-completeness implies that $e^{\frac{-2f}{(n-1)}} g$ is a  complete metric.  Alternately, $X$-completeness is equivalent to the completeness of a certain modified affine connection, see \cite{WylieYeroshkin} for more details.

Our most general splitting theorem is the following. 

\begin{theorem} \label{Theorem:SplittingNonGradient}  Let $(M,g)$ be a complete Riemannian metric supporting a vector field $X$ which is   $CD(0,1)$ and $X$-complete.  If $(M,g)$ admits a line then  $M$ is a twisted product metric on $\mathbb{R} \times L$.  If $X= \nabla f$ then $M$ is a warped product. 
\end{theorem}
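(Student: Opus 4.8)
\textbf{Proof proposal for Theorem~\ref{Theorem:SplittingNonGradient}.}
The plan is to reproduce the argument of Theorem~\ref{Theorem:Splitting} (via Lemmas~\ref{Lem:Bochner}, the Laplacian comparison, \ref{Lemma:Busemann}, and \ref{Lemma:Rigidity}) with all quantities that involved the potential $f$ replaced by the along-geodesic antiderivative $f_\gamma$, and to check that the only places where we used ``$f$ bounded above'' can be run instead on the weaker $X$-completeness hypothesis. First I would revisit the Bochner formula: Lemma~\ref{Lem:Bochner} is a pointwise identity, so it makes sense to state it for a vector field $X$ by setting $\Delta_X = \Delta - D_X$ and $\mathrm{Ric}_X^{n-m}$ as in the new definition; the computation is identical except that $\nabla f$ is replaced by $X$ and $d f \otimes d f$ by $X^\sharp \otimes X^\sharp$. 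Applying this to a distance function $r$ along a minimal geodesic $\gamma$, the Riccati inequality becomes
\[ \dot\lambda \le -\frac{\lambda^2}{(n-1)}\, e^{-\frac{2 f_\gamma(\gamma(t))}{n-1}}, \qquad \lambda = e^{\frac{2 f_\gamma}{n-1}}\,\Delta_X r \circ \gamma, \]
using that along $\gamma$ the relevant ``weight'' is $e^{f_\gamma/(n-1)}$ rather than $e^{f/(n-1)}$ (these agree up to a multiplicative constant in the gradient case, which is harmless). Integrating exactly as before gives the comparison
\[ (\Delta_X r)(q) \le \frac{(n-1)}{\,e^{\frac{2 f_\gamma(q)}{n-1}} \int_0^{r(q)} e^{-\frac{2 f_\gamma(\gamma(t))}{n-1}}\, dt\,}. \]

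Next I would carry over Lemma~\ref{Lemma:Busemann}: for a ray $\gamma$ and asymptotic rays $\overline\gamma$, the barrier functions $h_t(y) = t - d(y,\overline\gamma(t)) + b^\gamma(x)$ satisfy
\[ \Delta_X(h_t)(y) \ge \frac{-(n-1)}{\,e^{\frac{2 f_{\sigma}(y)}{n-1}} \int_0^{d(y,\overline\gamma(t))} e^{-\frac{2 f_{\sigma}(\sigma(s))}{n-1}}\, ds\,}, \]
where $\sigma$ is the minimizing geodesic realizing $d(y,\overline\gamma(t))$. Here is exactly where $X$-completeness enters: by definition the denominator integral, taken over minimizing unit-speed geodesics emanating from $y$, has $\limsup$ over length equal to $+\infty$, so along a suitable subsequence $t_i\to\infty$ the right-hand side tends to $0$ and we conclude $\Delta_X(b^\gamma) \ge 0$ in the barrier sense. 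One subtlety to handle carefully: $d(y,\overline\gamma(t))\to\infty$ and the realizing geodesics $\sigma$ vary with $t$, so I need the $\limsup$/$\inf$ form of $X$-completeness precisely so that some sequence of these geodesics has the integral blowing up; I should note that if $f_\gamma$ is bounded above for all minimizing geodesics then this is automatic, recovering Theorem~\ref{Theorem:Splitting}.

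Then the Busemann-function argument proceeds verbatim: with a line $\gamma = \gamma^+\cup\gamma^-$ and $b^\pm$ the two Busemann functions, $\Delta_X(b^\pm)\ge 0$, $b^+ + b^- \le 0$ with equality along $\gamma$, the strong maximum principle (for the elliptic operator $\Delta_X$, which is still a second-order elliptic operator since the zeroth-order term vanishes on $b^+ + b^-$) forces $b^+ = -b^-$, hence $\Delta_X(b^\pm)=0$, hence $b^\pm$ smooth by elliptic regularity and $|\nabla b^\pm|\equiv 1$. Finally I would redo Lemma~\ref{Lemma:Rigidity} with $X$ in place of $\nabla f$: equality in the $X$-version of Lemma~\ref{Lem:Bochner} gives $\mathrm{Ric}_X^1(\nabla r,\nabla r)=0$ and $\mathrm{Hess}\, r = \alpha g_r$, and now $\Delta r = (n-1)\alpha = g(X,\nabla r)$, so $\mathrm{Hess}\, r = \frac{g(X,\nabla r)}{n-1} g_r$. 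Writing $\psi(r,x) = f_\gamma$ along the geodesics $\gamma$ integral to $\nabla r$, i.e. $\partial_r \psi = g(X, \partial_r)$, this says $L_{\nabla r}\!\left(e^{-\frac{2\psi}{n-1}} g_r\right) = 0$, so $g_r = e^{\frac{2(\psi(r,\cdot)-\psi(0,\cdot))}{n-1}} g_0$ and the metric is the twisted product $g = dr^2 + e^{\frac{2\psi}{n-1}} g_L$ with $g_L = e^{-\frac{2\psi(0,\cdot)}{n-1}} g_0$. When $X = \nabla f$ we have $\psi = f - f(0,\cdot)$ up to reparametrization, so $g = dr^2 + e^{\frac{2f}{n-1}} g_L$ is a twisted product with potential exactly $f$, and Proposition~\ref{Prop:WarpedSplittingGradient} upgrades this to a warped product.

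\textbf{Main obstacle.} The genuinely new point — everything else is bookkeeping that replaces $f$ by $f_\gamma$ — is verifying that the barrier estimate in the $X$-complete case really forces $\Delta_X(b^\gamma)\ge 0$: one must confront that $f_\sigma$ depends on which minimizing geodesic $\sigma$ from $y$ to $\overline\gamma(t)$ is chosen and that these change as $t$ grows, so the clean ``denominator $\to\infty$'' conclusion is not immediate from a pointwise bound on $f$. This is precisely why the hypothesis is stated as an $\inf$ over all minimizing geodesics inside a $\limsup$ over their length; I expect the argument to require choosing, for each $t_i$, a near-minimizing geodesic and invoking compactness of directions at $y$ to extract a limiting ray, then reading off the blow-up of the energy integral from the definition of $X$-completeness. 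The secondary technical point to be careful about is that $f_\gamma$ is only $C^1$ along geodesics (not a globally defined smooth function), so in the rigidity step I must check that $\psi(r,x)$ assembled from the integral curves of $\nabla r$ is genuinely smooth on the product — which follows because $g(X,\nabla r)$ is smooth and the integral curves depend smoothly on the point, so $\psi$ is a smooth solution of $\partial_r\psi = g(X,\partial_r)$.
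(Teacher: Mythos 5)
Your proposal matches the paper's approach: adapt the Bochner identity to $\Delta_X$ via $v_\gamma = e^{f_\gamma/(n-1)}$ along geodesics, derive the Riccati/Laplacian comparison, show $\Delta_X b^\gamma \geq 0$ using $X$-completeness, run the maximum-principle argument, and then prove a rigidity lemma defining $\psi$ (the paper calls it $\phi$) by $\partial_r\psi = g(X,\partial_r)$ and concluding the twisted-product form (upgrading to warped via Proposition~\ref{Prop:WarpedSplittingGradient} when $X = \nabla f$). All of that is what the paper does, via its Lemmas~\ref{Lemma:BochnerDistanceFunction} and~\ref{Lemma:RigidityNonGradient}.

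Your ``main obstacle'' paragraph, however, identifies the wrong subtlety. For the barrier at $x$, the asymptotic ray $\overline{\gamma}$ is fixed once and for all by the barrier construction, and the minimizing geodesic from $\overline{\gamma}(t)$ to $x$ is simply $\overline{\gamma}|_{[0,t]}$ reversed — there is no family of geodesics varying with $t$ at the barrier point, and no need to re-extract a limiting ray by compactness. The actual thing to check is the normalization: the comparison gives
\[ \Delta_X\bigl(d(\cdot,\overline{\gamma}(t))\bigr)(x) \le \frac{n-1}{\,v_{\sigma_t}^2(t)\displaystyle\int_0^{t} v_{\sigma_t}^{-2}(s)\,ds\,},\]
with $\sigma_t$ the reversed geodesic based at $\overline{\gamma}(t)$, so $v_{\sigma_t}$ is normalized to equal $1$ at $\overline{\gamma}(t)$ rather than at $x$, and the prefactor $v_{\sigma_t}^2(t)$ depends on $t$. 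Since $f_{\sigma_t}(s) = f_{\overline{\gamma}}(t-s) - f_{\overline{\gamma}}(t)$, the $t$-dependent factors cancel and the denominator reduces to $\int_0^t e^{-2f_{\overline{\gamma}}(u)/(n-1)}\,du$. This diverges as $t\to\infty$ because $\overline{\gamma}|_{[0,r]}$ is itself one of the minimizing geodesics from $x$ of length $r$ for every $r$, so its integral dominates the infimum appearing in the definition of $X$-completeness, and that infimum has $\limsup = \infty$; monotonicity of the integral then upgrades the $\limsup$ to a genuine limit. With that correction, the rest of your argument is sound and follows the paper.
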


On the other hand, it is easy to see from the formulas in Proposition \ref{Proposition:Computation} that we can not obtain a warped product splitting for non-gradient fields.  
 \begin{proposition}  
There are metrics of the form $dr^2 + e^{\frac{2\phi}{n-1}} g_{S^n}$ which are $CD(0,1)$ where $\phi$ is not a function of $r$ and $X$ is not gradient.
\end{proposition}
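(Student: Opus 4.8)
\emph{Construction.} The plan is to exhibit the example as a small, genuinely $r$–$p$ entangled twist of the positively curved product $\mathbb{R}\times L$, with a potential field that is a unit drift in the $\partial_r$ direction plus a small Killing perturbation of the fibre. Following the conventions of Proposition \ref{Proposition:Computation}, I would take $M=\mathbb{R}\times L$ with $L$ a round sphere of dimension $n-1\ge 2$ (so $\mathrm{Ric}_{g_L}>0$), fix a non-constant $\omega\in C^\infty(L)$, a non-constant $\sigma\in C^\infty(\mathbb{R})$ with $\sigma,\sigma',\sigma''$ bounded (e.g.\ $\sigma=\tanh$), and a nonzero Killing field $K$ of $g_L$ (e.g.\ a rotation field), lifted to $M$ independently of $r$. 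For a small parameter $\epsilon>0$ set
\[ \psi(r,p)=\epsilon\,\omega(p)\,\sigma(r),\qquad g=dr^2+e^{\frac{2\psi}{n-1}}g_L,\qquad X=\partial_r+\epsilon\,K . \]
Since $\psi$ is a nonzero product of a non-constant function of $r$ with a non-constant function of $p$, it is not a sum of a function of $r$ and a function of $p$; hence $g$ is a genuine twisted product over $\mathbb{R}$ — it is not a warped product, and its warping exponent depends on the sphere variable $p$, as the statement requires.

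\emph{Verification.} Writing $\mathrm{Ric}_X^1=\mathrm{Ric}+\tfrac12 L_Xg+\tfrac{1}{n-1}X^\sharp\otimes X^\sharp$, I would evaluate each term from the connection and Ricci formulas of Proposition \ref{Proposition:Computation} and track its size as $\epsilon\to 0$: the tensors $\psi,\nabla\psi,\mathrm{Hess}\,\psi$ tend to $0$ uniformly on $M$, so $\mathrm{Ric}\to 0\oplus\mathrm{Ric}_{g_L}$ (the Ricci tensor of the Riemannian product $\mathbb{R}\times L$); $L_{\partial_r}g=\tfrac{2\psi_r}{n-1}(g-dr^2)$ and $L_Kg=\tfrac{2(K\psi)}{n-1}(g-dr^2)$ are both $O(\epsilon)$; and $X^\sharp\otimes X^\sharp\to dr\otimes dr$. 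Hence $\mathrm{Ric}_X^1\to \tfrac{1}{n-1}\,dr\otimes dr\ \oplus\ \mathrm{Ric}_{g_L}$, which is positive definite since $n-1\ge 2$, so by continuity (all these bounds are uniform on $M$ because $e^{\pm 2\psi/(n-1)}$ stays bounded) $\mathrm{Ric}_X^1>0$ for every sufficiently small $\epsilon$, i.e.\ $(M,g,X)$ is $CD(0,1)$. Finally $X$ is not a gradient: if $X=\nabla h$ then $\partial_r h=g(X,\partial_r)=1$ forces $h=r+k(p)$, and equating the fibre parts of $\nabla h$ and $X$ gives $\nabla^{g_L}k=\epsilon\,e^{\frac{2\psi}{n-1}}K$, which is impossible because the left side is independent of $r$ while the right is not ($\sigma,\omega$ non-constant, $\epsilon\ne0$, $K\ne0$).

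\emph{Where the content lies.} The observation to be extracted from Proposition \ref{Proposition:Computation} is that the metric $g$ by itself has $\mathrm{Ric}(\partial_r,\partial_r)=-\psi_{rr}-\tfrac{\psi_r^2}{n-1}$, negative wherever $\psi$ is $r$-convex, and indeed — by the splitting theorem, Theorem \ref{Theorem:SplittingNonGradient} — no gradient field with controlled potential can certify that $g$ is $CD(0,1)$ unless $g$ is a warped product, which it is not. What rescues the situation, and has no analogue in the gradient-with-bounded-potential setting, is the momentum term $\tfrac{1}{n-1}X^\sharp\otimes X^\sharp$: even a unit-speed drift $\partial_r$ contributes $\tfrac{1}{n-1}$ to $\mathrm{Ric}_X^1(\partial_r,\partial_r)$ and restores strict positivity there, after which the remaining directions are a soft perturbation of the positively curved product; the non-gradient character is then imposed cheaply by the fibre-Killing term $\epsilon K$. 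The only real care is to keep the twist simultaneously $r$–$p$ entangled (so that $g$ is not a warped product) and uniformly small (so that the perturbation argument closes), and the ansatz $\psi=\epsilon\,\omega(p)\,\sigma(r)$ does exactly this — which is why the claim is indeed ``easy to see from the formulas in Proposition \ref{Proposition:Computation}.''
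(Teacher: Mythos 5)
Your proposal is correct, but it proves the proposition by a genuinely different construction than the paper's. The paper's proof takes a single function $\phi$ on $M$ and sets $X=\frac{2}{n-1}\frac{\partial \phi}{\partial r}\frac{\partial}{\partial r}+\frac{n-3}{n-1}\nabla\phi$; this particular combination is tuned, via Proposition~\ref{Proposition:Computation}, so that $\mathrm{Ric}_X^1(\partial_r,\,\cdot\,)\equiv 0$ identically, mirroring the way $\mathrm{Ric}_f^1(\partial_r,\,\cdot\,)=0$ on the split spaces of Section~2. The remaining (spherical) block of $\mathrm{Ric}_X^1$ is then $\mathrm{Ric}^{S^{n-1}}$ plus error terms in $\phi$ and its first two derivatives, which is nonnegative once $\|\phi\|_{C^2}$ is small relative to the Einstein constant; the non-gradient character of $X$ is automatic the moment $\phi$ depends on the fibre variable. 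You instead take two independent small perturbations---a product twist $\psi=\epsilon\,\omega(p)\sigma(r)$ and a fibre-Killing drift $X=\partial_r+\epsilon K$---and exploit the momentum term $\frac{1}{n-1}X^\sharp\otimes X^\sharp$: the unperturbed $\partial_r$ already contributes $\frac{1}{n-1}>0$ in the radial direction, so after the $O(\epsilon)$ corrections from $\mathrm{Ric}$, $L_Xg$, and the off-diagonal pieces, $\mathrm{Ric}_X^1$ remains (strictly) positive definite uniformly for small $\epsilon$. Each route has a point in its favor: the paper's version needs only one auxiliary function, makes the parallel with the gradient split-space computations transparent, and exhibits the borderline behaviour $\mathrm{Ric}_X^1(\partial_r,\partial_r)=0$; yours is a softer, purely perturbative argument that requires no algebraic coincidence in the choice of $X$, and it shows the stronger fact that one can even arrange $\mathrm{Ric}_X^1>0$ everywhere. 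One small point of care in your write-up: for the ``right side is not $r$-independent'' step one should note that $\omega$, $K$ can be chosen (shift $\omega$ by a constant if necessary) so that $\omega K\neq 0$ somewhere; with that proviso your non-gradient verification is correct.
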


\begin{proof}
For any function $\phi$,  let $X = \frac{2}{n-1} \frac{\partial \phi}{\partial r} + \left( \frac{n-3}{n-1} \right) \nabla \phi$.  Note that $X$ is a gradient field if and only if $\phi$ is a function of $r$.  Then a calculation using Proposition \ref{Proposition:Computation}  shows that $\mathrm{Ric}_X^1 \left( \frac{\partial}{\partial r}, Y \right) = 0$ for all $Y$.  The formula for $\mathrm{Ric}_X^1$ for vectors tangent to $S^{n}$ is much more complicated.  However, it is of the form 
\[ \mathrm{Ric}_X^1(U,V) = \mathrm{Ric}^{S^n}(U,V) + \text{ terms involving $\phi$ and its first and second partial derivatives. } \]

The terms  on the right will also go to zero as $\phi$ and its partial derivatives go to zero.  Therefore, if we take $g_{S^n}$ to be a round sphere with positive Einstein constant $\lambda$, there is a constant $A$ which depends on $\lambda$ and the dimension such that if  $\phi$ and its first and second derivatives are all less than $A$, then $\mathrm{Ric}_X^1 (U,V) \geq 0$.   
\end{proof}
Now we turn our attention to proving the splitting theorem.  The first component is the Bochner formula applied to the twisted Laplacian $\Delta_X = \Delta - D_X$, of the distance function, which follows from the same argument as in Lemma \ref{Lem:Bochner}. 

\begin{lemma}  \label{Lemma:BochnerDistanceFunction} Suppose $(M^n,g,X)$ is CD(0,1) and that $r$ is a smooth distance function on an open subset of a Riemannian manifold $(M,g)$.  Let $\gamma$ be an integral curve of $r$ and let $v_{\gamma} = e^{\frac{f_{\gamma}}{n-1}}$.  Then, 
\[ \frac{d}{dr}\left( v_{\gamma}^2 \Delta_X r \right)  \leq  - v_{\gamma}^2 \frac{(\Delta_X r)^2}{n-1}  \]
where $\frac{d}{dr}$ denotes the derivative along $\gamma$.  Moreover, if equality  is achieved at a point $p$  then the $(n-1)$ non-zero eigenvalues of $\mathrm{Hess} r|_p$ are all equal and $\mathrm{Ric}_X^{1}(\nabla r, \nabla r) = 0$ at $p$.
\end{lemma}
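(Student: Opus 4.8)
The plan is to mimic the proof of Lemma~\ref{Lem:Bochner}, but carrying the vector field $X$ in place of a gradient. First I would recall the classical Bochner formula for the distance function $r$: since $|\nabla r|=1$ and $\nabla_{\nabla r}\nabla r = 0$, differentiating along the integral curve $\gamma$ of $r$ gives the Riccati-type identity
\[ \frac{d}{dr}(\Delta r) = -|\mathrm{Hess}\, r|^2 - \mathrm{Ric}(\nabla r,\nabla r). \]
(This is the specialization of the usual $\frac12\Delta|\nabla r|^2$ formula when $h=r$ is a distance function, so the $g(\nabla r,\nabla\Delta r)$ term becomes the derivative along $\gamma$.) Then I would rewrite everything in terms of the twisted Laplacian $\Delta_X r = \Delta r - g(X,\nabla r)$ and the quantity $\dot f_\gamma = g(\dot\gamma, X(\gamma))$, so that $\Delta r = \Delta_X r + \dot f_\gamma$, exactly paralleling the substitution $\Delta h = \Delta_f h + g(\nabla h,\nabla f)$ used in Lemma~\ref{Lem:Bochner}.

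Next, the key algebraic step: set $v_\gamma = e^{f_\gamma/(n-1)}$ and compute $\frac{d}{dr}\big(v_\gamma^2 \Delta_X r\big)$. Using $\frac{d}{dr} v_\gamma^2 = \frac{2}{n-1}\dot f_\gamma\, v_\gamma^2$ and the Bochner identity above (with $\mathrm{Hess}\, r$ having at most $n-1$ nonzero eigenvalues, hence $|\mathrm{Hess}\, r|^2 \geq (\Delta r)^2/(n-1)$ by Cauchy--Schwarz), one finds
\[ \frac{d}{dr}\big(v_\gamma^2 \Delta_X r\big) = v_\gamma^2\Big(\frac{2}{n-1}\dot f_\gamma\,\Delta_X r + \frac{d}{dr}\Delta_X r\Big) \leq -v_\gamma^2\Big(\mathrm{Ric}(\nabla r,\nabla r) + \frac{(\Delta r)^2}{n-1} - \frac{2\dot f_\gamma}{n-1}\Delta_X r\Big). \]
Expanding $(\Delta r)^2 = (\Delta_X r + \dot f_\gamma)^2$ the cross term $\frac{2}{n-1}\dot f_\gamma \Delta_X r$ cancels, leaving
\[ \frac{d}{dr}\big(v_\gamma^2 \Delta_X r\big) \leq -v_\gamma^2\Big(\mathrm{Ric}(\nabla r,\nabla r) + \tfrac{1}{n-1}\dot f_\gamma^2 + \tfrac{(\Delta_X r)^2}{n-1}\Big). \]
Here I would identify $\mathrm{Ric}(\nabla r,\nabla r) + \tfrac12 (L_X g)(\nabla r,\nabla r) + \tfrac{1}{n-1}\dot f_\gamma^2 = \mathrm{Ric}_X^1(\nabla r,\nabla r)$, noting that along the geodesic $\gamma$ the Hessian-type contribution $\tfrac12(L_X g)(\dot\gamma,\dot\gamma) = \frac{d}{dr}\dot f_\gamma - $ (a term that gets absorbed when differentiating $\Delta_X r$ correctly); in the gradient case this is precisely how $\mathrm{Hess}\, f(\nabla r,\nabla r)$ enters, and the same bookkeeping applies for general $X$ since only the symmetric part $\tfrac12 L_X g$ appears in $\mathrm{Ric}_X^1$ and only $g(\dot\gamma,\nabla_{\dot\gamma}X)$ is felt along $\gamma$. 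Applying $\mathrm{Ric}_X^1 \geq 0$ then gives
\[ \frac{d}{dr}\big(v_\gamma^2 \Delta_X r\big) \leq -v_\gamma^2\frac{(\Delta_X r)^2}{n-1}, \]
which is the claimed inequality.

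The equality statement follows by tracking where inequalities were used: equality forces $|\mathrm{Hess}\, r|^2 = (\Delta r)^2/(n-1)$, hence the $n-1$ nonzero eigenvalues of $\mathrm{Hess}\, r|_p$ are all equal, and forces $\mathrm{Ric}_X^1(\nabla r,\nabla r) = 0$ at $p$ — verbatim as in Lemma~\ref{Lem:Bochner}. I expect the main obstacle to be the careful bookkeeping in the third paragraph: making precise that, along an integral curve of the distance function, the relevant derivative of $\dot f_\gamma = g(\dot\gamma,X)$ contributes exactly the $\tfrac12(L_X g)(\nabla r,\nabla r)$ term of $\mathrm{Ric}_X^1$ and not the antisymmetric part of $\nabla X$. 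Concretely, $\frac{d}{dr} g(\dot\gamma,X) = g(\dot\gamma,\nabla_{\dot\gamma}X)$ because $\nabla_{\dot\gamma}\dot\gamma=0$, and $g(\dot\gamma,\nabla_{\dot\gamma}X) = (\nabla X)(\dot\gamma,\dot\gamma) = \tfrac12(L_X g)(\dot\gamma,\dot\gamma)$ since contracting any $2$-tensor twice against the same vector kills its antisymmetric part; once this identity is in hand, the rest is the same formal manipulation as in Lemma~\ref{Lem:Bochner} with $f$ replaced by $f_\gamma$ and $\nabla f$ replaced by $X$.
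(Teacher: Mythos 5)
Your proposal is correct and follows essentially the same route as the paper: both hinge on the identity $\frac{d}{dr}\,g(\dot\gamma,X)=g(\nabla_{\dot\gamma}X,\dot\gamma)=\tfrac12(L_Xg)(\dot\gamma,\dot\gamma)$ along the geodesic (which is the specialization to a distance function of the paper's identity $g(\nabla h,\nabla(D_Xh))=\tfrac12\bigl(L_Xg(\nabla h,\nabla h)+D_X|\nabla h|^2\bigr)$), followed by multiplying by $v_\gamma^2$, Cauchy--Schwarz, and expanding $\Delta r=\Delta_X r+\dot f_\gamma$ exactly as in Lemma~\ref{Lem:Bochner}. The only cosmetic discrepancy is that your two displayed inequalities in the middle drop the $-\ddot f_\gamma=-\tfrac12(L_Xg)(\dot\gamma,\dot\gamma)$ term arising from $\frac{d}{dr}\Delta_X r=\frac{d}{dr}\Delta r-\ddot f_\gamma$, but you correctly reinstate it in the subsequent discussion, so the argument is sound.
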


\begin{proof} As is well known,  the usual Bochner formula for functions, 
\[ \frac{1}{2} \Delta|\nabla h|^2  = |\mathrm{Hess} h|^2 + \mathrm{Ric}(\nabla h, \nabla h) + g(\nabla h, \nabla \Delta h), \]
can also be modified for non-gradient fields in the same manner as in (\ref{eqn:WeightedBochner}) to  
\begin{eqnarray*} \frac{1}{2}\Delta_X |\nabla  h|^2 =  |\mathrm{Hess} h|^2 + \mathrm{Ric}^{\infty}_X(\nabla h, \nabla h) + g(\nabla h, \nabla \Delta_X h).  \end{eqnarray*}
This follows directly from the identity
\begin{eqnarray*}
g(\nabla h, \nabla (D_X h))   &=&   D_{\nabla h} g(X, \nabla h) \\
&=& g(\nabla_{\nabla h} X, \nabla h) + g(X, \nabla_{\nabla h} \nabla h)  \\
&=&  \frac{1}{2} \left( L_X g(\nabla h, \nabla h) + D_X |\nabla h|^2 \right) 
\end{eqnarray*}
 The proof  is then identical to the proof of  Lemma \ref{Lem:Bochner} using $v_{\gamma}$ in the place of $v$ in the argument. 
\end{proof}
  
Following the same arguments as in Section 3, it then follows that if $(M,g,X)$ is $CD(0,1)$ and $X$-complete then  $\Delta_X(b_{\gamma}) \geq 0$ for any Busemann function.  In the gradient case, we then have the splitting theorem when $(M,g,f)$ is $f$-complete.   The other element needed for the splitting theorem  in the non-gradient case is a generalization of  Lemma \ref{Lemma:Rigidity} to the non-gradient case. 

 \begin{lemma} \label{Lemma:RigidityNonGradient} Suppose that  $(M,g)$ is a complete Riemannian manifold with a smooth vector field $X$ that is $CD(0,1)$.   If  there is a smooth function $r$ on $(M,g)$  such that $|\nabla r|^2=1$  and $\Delta_X r = 0$, then
\begin{enumerate}
\item  $M$ splits topologically as $\mathbb{R} \times N$, with metric of the form $g = dr^2 + e^{\frac{2\phi}{n-1}} g_N$, where $g_N$ is a metric on $N$ and $\phi:M \rightarrow \mathbb{R}$.
\item $\mathrm{Ric}_X^1(\nabla r, \nabla r) = 0$.  
\item $X = \frac{\partial \phi}{\partial r} \frac{\partial}{\partial r} + U$ where $U \perp \frac{\partial}{\partial r}$. 
\end{enumerate}
\end{lemma}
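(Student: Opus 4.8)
The plan is to mirror the gradient-case proof of Lemma \ref{Lemma:Rigidity}, replacing $\nabla f$ by $X$ and the Bochner estimate of Lemma \ref{Lem:Bochner} by that of Lemma \ref{Lemma:BochnerDistanceFunction}. First I would note that $r$ smooth with $|\nabla r| = 1$ gives the topological splitting $M \cong \mathbb{R} \times N$ by Morse theory, and lets us write $g = dr^2 + g_r$ where $g_r$ is the induced metric on the level sets. The integral curves of $\nabla r$ are unit-speed geodesics $\gamma$, so along each such $\gamma$ the hypothesis $\Delta_X r = 0$ together with the inequality $\frac{d}{dr}(v_\gamma^2 \Delta_X r) \leq -v_\gamma^2 (\Delta_X r)^2/(n-1)$ forces equality in Lemma \ref{Lemma:BochnerDistanceFunction} at every point. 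Hence (2) $\mathrm{Ric}_X^1(\nabla r, \nabla r) = 0$ holds, and moreover the $(n-1)$ eigenvalues of $\mathrm{Hess}\, r$ are all equal, i.e. $\mathrm{Hess}\, r = \alpha g_r$ for some function $\alpha$ on $M$.

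Next I would identify $\alpha$. Tracing $\mathrm{Hess}\, r = \alpha g_r$ gives $\Delta r = (n-1)\alpha$. Since $\Delta_X r = \Delta r - D_X r = 0$ we get $(n-1)\alpha = D_X r = g(X, \nabla r)$. Now define $\phi$ on $M$ so that $\partial \phi/\partial r = g(X,\nabla r)$ along the $r$-lines (integrate the function $g(X,\nabla r)$ in the $r$-variable, with the integration "constant" being an arbitrary function on $N$; the precise normalization of $\phi$ will be pinned down below). Then $\alpha = \frac{1}{n-1}\partial\phi/\partial r$, so $\mathrm{Hess}\, r = \frac{1}{n-1}(\partial\phi/\partial r) g_r$. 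Exactly as in Lemma \ref{Lemma:Rigidity}, this is the statement $L_{\nabla r}\big(e^{-2\phi/(n-1)} g_r\big) = 0$ (using $L_{\nabla r} g_r = 2\,\mathrm{Hess}\, r$ on the level-set directions and that $\nabla r$ annihilates the $dr^2$ part), hence $e^{-2\phi/(n-1)} g_r$ is independent of $r$; calling its value $g_N$ we obtain $g = dr^2 + e^{2\phi/(n-1)} g_N$, which is (1). Note $\phi$ is a genuine function on $M$ here (not only on $\mathbb{R}$), since $g(X,\nabla r)$ need not be $r$-dependent only — this is why (1) is stated with $\phi: M \to \mathbb{R}$ rather than a warped product.

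Finally, for (3), decompose $X = a\,\frac{\partial}{\partial r} + U$ with $U$ tangent to the level sets. Since $|\nabla r| = 1$ and $\nabla r = \partial/\partial r$, we have $a = g(X, \partial/\partial r) = g(X, \nabla r) = \partial\phi/\partial r$ by the defining relation for $\phi$, giving $X = (\partial\phi/\partial r)\frac{\partial}{\partial r} + U$ with $U \perp \frac{\partial}{\partial r}$. The main obstacle I anticipate is the consistency check that the function $\phi$ extracted from the ODE $\mathrm{Hess}\, r = \frac{1}{n-1}(\partial\phi/\partial r)g_r$ can be chosen globally and smoothly and is compatible with the Lie-derivative computation — in the gradient case one simply takes $\phi = f$ along the flow, whereas here one must verify that $g(X,\nabla r)$, integrated along $r$, yields a well-defined smooth $\phi$ on $\mathbb{R}\times N$; this is routine given smoothness of $X$, $r$ and completeness of the $\nabla r$-flow (which follows from $|\nabla r| = 1$ and completeness of $g$), but it is the one place where the non-gradient case genuinely differs from Lemma \ref{Lemma:Rigidity}. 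I would also remark that, unlike the gradient case, one cannot follow with an appeal to Proposition \ref{Prop:WarpedSplittingGradient} to upgrade to a warped product, which is consistent with the earlier Proposition showing warped-product splitting genuinely fails for non-gradient $X$.
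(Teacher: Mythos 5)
Your proposal is correct and follows essentially the same route as the paper: the topological splitting from $|\nabla r|=1$, equality in the non-gradient Bochner lemma giving $\mathrm{Ric}_X^1(\nabla r,\nabla r)=0$ and $\mathrm{Hess}\,r = \alpha\,g_r$, tracing to identify $\alpha = \tfrac{1}{n-1}g(X,\nabla r)$, integrating $a(r,x)=g(X,\nabla r)$ in $r$ to produce the smooth function $\phi$, and the Lie-derivative argument $L_{\nabla r}\bigl(e^{-2\phi/(n-1)}g_r\bigr)=0$ to extract $g_N$. The paper simply makes the definition of $\phi$ explicit as $\phi(r,x)=\int_0^r a(t,x)\,dt$, which fixes the normalization you left implicit; otherwise the arguments coincide.
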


\begin{proof}
Since $|\nabla r|=1$ we have $\mathbb{R} \times N$ topologically and  $g = dr^2 + g_r$, where $g_r$ is the metric restricted to a  level set of $r$.  In terms of this splitting write the vector field $X = a(r, x) \frac{\partial}{\partial r} + Y$  where  $a:M \rightarrow \mathbb{R}$ and $Y$ is tangent to $N$ at every point.  Then,  for $\gamma$ an integral curve of $r$,  we have $g(X, \dot{\gamma}) = a$.  Define a function $\phi$ globally on $M$ via the formula 
\[ \phi(r,x) = \int_0^r  a(t, x) dt.  \]
$\phi$ is  clearly a smooth function since $a$ is smooth.    

The assumptions imply that we have equality in Lemma \ref{Lemma:BochnerDistanceFunction}, so $\mathrm{Ric}_X^{1} (\nabla r, \nabla r)= 0$ and $\mathrm{Hess} r = \alpha g_r$ for some function $\alpha$.  But we also have $\Delta r = (n-1) \alpha = g(X, \nabla r)$ so 
\[ \mathrm{Hess} r =\frac{ g(X, \nabla r)}{n-1} g_r \]
Since $D_{\nabla r} \phi = a(r,x) = g(X, \nabla r)$ this implies that 
\[ L_{\nabla r} \left(e^{\frac{-2\phi}{n-1}} g_r\right) = 0 \]
which implies that 
$g_r = e^{\frac{2(\phi(r, \cdot)- \phi(0, \cdot))}{n-1}} g_0$. 
This gives us that the metric is a twisted product $g = dr^2 + e^{\frac{2\phi}{n-1}} g_N$ where $g_N=e^{\frac{-2 \phi(0, \cdot)}{n-1}} g_0$ is a fixed metric on $N$.    Note that the function $\phi$ automatically  satisfies (3)  as $\frac{\partial \phi}{\partial r}= a(r,x)$.
\end{proof}

The proof of Theorem \ref{Theorem:SplittingNonGradient} then follows using Lemma \ref{Lemma:RigidityNonGradient}  and  the same arguments as in Section 3. In the $CD(0,N)$ case $N<1$ we also obtain the isometric product splitting. 

\begin{corollary} 
Suppose that $(M,g)$ is a complete Riemannian manifold and $X$ is a smooth vector field on $M$ which is $X$-complete and $CD(0,N)$ for $N<1$ .  If $(M,g)$ admits a line then $M$ is isometric to a product metric $M = \mathbb{R} \times L$ and $X$ is a vector field on $L$.
\end{corollary}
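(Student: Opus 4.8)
The plan is to follow the same strategy as in the gradient case (Corollary~\ref{Corollary:Splitting}), combining Theorem~\ref{Theorem:SplittingNonGradient} with the rigidity in Lemma~\ref{Lemma:RigidityNonGradient} to upgrade the twisted splitting to an isometric one. Since $CD(0,N)$ with $N<1$ implies $CD(0,1)$ (because $\mathrm{Ric}_X^N \le \mathrm{Ric}_X^1$ pointwise when $N<1<n$, as $\frac{1}{N-n} < \frac{1}{1-n} < 0$), Theorem~\ref{Theorem:SplittingNonGradient} applies: there is a smooth distance-type function $r$ with $|\nabla r|=1$ and $\Delta_X r = 0$, and Lemma~\ref{Lemma:RigidityNonGradient} then gives $M = \mathbb{R}\times N$ topologically with $g = dr^2 + e^{\frac{2\phi}{n-1}} g_N$, together with $\mathrm{Ric}_X^1(\nabla r,\nabla r) = 0$ and $X = \frac{\partial\phi}{\partial r}\frac{\partial}{\partial r} + U$ with $U\perp \frac{\partial}{\partial r}$.

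Next I would exploit the stronger curvature hypothesis exactly where it bites, namely on the $\frac{\partial}{\partial r}$ direction. Write $\partial_r = \frac{\partial}{\partial r}$. The key identity is the comparison between $\mathrm{Ric}_X^1$ and $\mathrm{Ric}_X^N$ evaluated on $\partial_r$:
\[
\mathrm{Ric}_X^N(\partial_r,\partial_r) = \mathrm{Ric}_X^1(\partial_r,\partial_r) - \left(\frac{1}{N-n} - \frac{1}{1-n}\right) (X^\sharp(\partial_r))^2,
\]
where $X^\sharp(\partial_r) = g(X,\partial_r) = \frac{\partial\phi}{\partial r}$ by part (3) of Lemma~\ref{Lemma:RigidityNonGradient}. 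Since $\mathrm{Ric}_X^1(\partial_r,\partial_r) = 0$ and the coefficient $\frac{1}{N-n} - \frac{1}{1-n}$ is strictly negative for $N<1$, the hypothesis $\mathrm{Ric}_X^N \ge 0$ forces $\frac{\partial\phi}{\partial r} \equiv 0$. (Alternatively, one can reprove the chain of inequalities from the Bochner lemma directly with $N$ in place of $1$ and read off the same conclusion; I would present whichever is cleanest given the sign conventions already fixed in the paper.) I should double-check the precise algebraic form of this difference using the defining formula $\mathrm{Ric}_X^N = \mathrm{Ric} + \tfrac12 L_X g - \frac{X^\sharp\otimes X^\sharp}{N-n}$, but it is the same computation as in the proof of Corollary~\ref{Corollary:Splitting}.

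With $\frac{\partial\phi}{\partial r} = 0$, the function $\phi$ depends only on the $N$-factor, so $g = dr^2 + e^{\frac{2\phi(x)}{n-1}} g_N = dr^2 + g_L$ where $g_L := e^{\frac{2\phi(x)}{n-1}} g_N$ is a fixed metric on $L := N$; this is the desired isometric product $M = \mathbb{R}\times L$. Moreover part (3) of Lemma~\ref{Lemma:RigidityNonGradient} now gives $X = U$, a vector field tangent to the $L$-slices; and since $g(X,\partial_r) = \frac{\partial\phi}{\partial r} = 0$ and the slices are totally geodesic parallel copies of $(L,g_L)$, one checks $X$ is independent of $r$, i.e. $X$ is a vector field on $L$. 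The only mild obstacle is this last verification that $X$ genuinely descends to $L$ rather than varying along $\mathbb{R}$; I expect it follows from re-running the rigidity argument (the level sets of $r$ are isometric via the flow of $\nabla r$, and $X$ being $r$-independent is forced by $\Delta_X r = 0$ holding on every slice together with $\mathrm{Ric}_X^1(\nabla r,\cdot)=0$), but it is worth spelling out, exactly as the analogous point is handled at the end of the proof of Corollary~\ref{Corollary:Splitting}.
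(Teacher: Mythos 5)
Your approach is essentially the same as the paper's: reduce to $CD(0,1)$, apply Theorem \ref{Theorem:SplittingNonGradient} and Lemma \ref{Lemma:RigidityNonGradient} to get the twisted product and the facts $\mathrm{Ric}_X^1(\partial_r,\partial_r)=0$ and $X=\tfrac{\partial\phi}{\partial r}\partial_r + U$, then use $\mathrm{Ric}_X^N\geq 0$ on $\partial_r$ to kill $\tfrac{\partial\phi}{\partial r}$, and finally use $\mathrm{Ric}_X^1(\partial_r,\,\cdot\,)=0$ to show $X$ is $r$-independent. Two small points. First, you have the sign backwards in the parenthetical: for $N<1<n$ one has $N-n<1-n<0$, hence $\tfrac{1}{N-n}-\tfrac{1}{1-n}>0$, so the coefficient in front of $(X^\sharp(\partial_r))^2$ in your displayed formula, namely $-\bigl(\tfrac{1}{N-n}-\tfrac{1}{1-n}\bigr)$, is the thing that is strictly negative. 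As written (claiming $\tfrac{1}{N-n}-\tfrac{1}{1-n}<0$) the inequality $\mathrm{Ric}_X^N\geq 0$ would be satisfied automatically and give no constraint, so the sign matters; the paper records the coefficient as $-\tfrac{1-N}{(n-1)(n-N)}$, which is the correct (negative) value. Second, the final step you flag as ``worth spelling out'' is exactly the extra content needed in the non-gradient case: the paper carries it out by computing in product coordinates, writing $X=b_i\partial_{y^i}$ and showing $0=\mathrm{Ric}_X^1(\partial_r,\partial_{y^k})=\tfrac12\tfrac{\partial b_k}{\partial r}$, so $X$ descends to $L$. You correctly identify $\mathrm{Ric}_X^1(\nabla r,\,\cdot\,)=0$ as the input, but this computation should be included rather than left as an expectation.
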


\begin{proof}
Since $(M,g,X)$ is $CD(0,N)$, it is also $CD(0,1)$ so Theorem \ref{Theorem:SplittingNonGradient} implies that $g$ is a  twisted  product, $g= dr^2 + e^{\frac{2\phi}{n-1}} g_L$ .    We also have that $\mathrm{Ric}_X^{1}( \frac{\partial}{\partial r} ,  \frac{\partial}{\partial r} ) = 0$. Since $X =   \frac{\partial \phi}{\partial r} \frac{\partial}{\partial r} + U$ where $U \perp \frac{\partial}{\partial r}$, this gives us 
\begin{eqnarray*}
\mathrm{Ric}_X^{L}\left( \frac{\partial}{\partial r} ,  \frac{\partial}{\partial r} \right) &=&  -\left( \frac{1-N}{(n-1)(n-N)} \right) \left(\frac{\partial \phi}{ \partial r}\right)^2 
\end{eqnarray*}
so we must have $\frac{\partial \phi}{\partial r} = 0$.    This implies that metric $g$ is a product metric, which we can write as $g = dr^2 + h_L$, where $h_L$ is a conformal metric to $g_L$.  

We can also show $X$ is a vector field on $L$ only using the fact that $\mathrm{Ric}_X^1\left( \frac{\partial}{\partial r}, V\right) = 0$ for all $V \perp \frac{\partial}{\partial r}$. To see this, fix a point $x$ in $N$, let $\frac{\partial}{\partial y^i}$, $i=1, \dots, n-1$ be an orthonormal basis of local coordinates around $x$ in the $g_L$ metric.  Write $X =  b_i \frac{\partial}{\partial y^i}$  Then 
\begin{eqnarray*}
0= \mathrm{Ric}_X^1\left( \frac{\partial}{\partial r}, \frac{\partial}{\partial y^k}\right) = \frac{1}{2} \frac{\partial b_k}{\partial r} 
\end{eqnarray*}
So $X$ is a vector field on $L$ that does not depend on $r$.
\end{proof}

We would also like to generalize Theorem \ref{Theorem:IterativeSplitting} to the case where $X= \nabla f$, but the upper bound on $f$ is replaced by $f$-completeness.  The only obstacle that arises in the proof is  that when a space splits, it is not a priori clear that $f$-completeness on the whole space should imply $f$-completeness on the fiber.  It turns out, however,  that we can use  geodesic equations for a warped product to show that $f$-completeness does have this natural property for the spaces in our splitting theorem.   

Consider a split space to be a warped product of the form $ dr^2 + e^{\frac{2\phi}{n-1}} g_L$ with potential function $f = \phi(r) + f_L$.  Let $\gamma$ be a unit speed minimizing geodesic and write  $\gamma(s) = (\gamma_1(s), \gamma_2(s))$,  where $\gamma_1$ and $\gamma_2$ are the projections in the factors $\mathbb{R}$ and $L$.  The $f$-completeness condition implies for any ray of $(M,g_M)$  that $ \int_0^{\infty} \left( e^{\frac{-2\phi(\gamma_1(s))}{n-1} }  e^{\frac{-2f_L(\gamma_2(s))}{n-1} }  \right) ds $ diverges.  We have the following proposition. 

\begin{proposition} \label{Proposition:WPLinesBED}
Suppose $M$ is a split space that is $f$-complete and let $\gamma: (a,b) \rightarrow M$ be a minimizing geodesic of the form $\gamma(s) = (\gamma_1(s), \gamma_2(s))$ then 
\begin{enumerate}
%\item  $\gamma_2$ is  either constant or its image is a minimizing geodesic in $(L,g_L)$. 
%\item  If $\gamma_2$ is not constant, then there is a constant $C>0$, depending on $\gamma$ such that $length(\gamma_2) \geq  C length(\gamma)$. 
\item If $\gamma_2$ is not constant and $\gamma$ is a line in $M$, then the image of $\gamma_2$ is a line in $L$. 
\item  The manifold with density $(L, g_L, f_L)$  is $f_L$-complete. 
\end{enumerate}
\end{proposition}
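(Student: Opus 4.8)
The plan is to exploit the warped-product geodesic equations, exactly as in the proofs of Propositions \ref{Proposition:WPLines} and \ref{Theorem:CompactUniversalCover}. Recall that for a warped product $g_M = dr^2 + e^{\frac{2\phi}{n-1}}g_L$ a unit speed geodesic $\gamma(s) = (\gamma_1(s),\gamma_2(s))$ satisfies the conserved quantity $e^{\frac{4\phi(\gamma_1(s))}{n-1}} g_L(\dot\gamma_2,\dot\gamma_2) = C$ for a constant $C \geq 0$, and the unit speed condition gives $|\dot\gamma_1|^2 + e^{\frac{2\phi(\gamma_1)}{n-1}}g_L(\dot\gamma_2,\dot\gamma_2) = 1$. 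For item (1), I would argue as in Proposition \ref{Proposition:WPLines}: the projection argument there already shows the image of $\gamma_2$ is a minimizing geodesic in $(L,g_L)$, so I only need that each branch of $\gamma_2$ has infinite $g_L$-length. If $\gamma_1$ is bounded on $[0,\infty)$, then $\phi(\gamma_1)$ is bounded there, so the conserved quantity forces $g_L(\dot\gamma_2,\dot\gamma_2)$ bounded below by a positive constant, giving infinite length; if instead $\gamma_1$ is unbounded, then the branch of $\gamma$ itself has the $\mathbb{R}$-projection going to infinity, and I need to rule out $C = 0$ — but $C=0$ means $\gamma_2$ is constant, contradicting the hypothesis, so $C > 0$ and I can bound $g_L(\dot\gamma_2,\dot\gamma_2) \geq C e^{-\frac{4\phi(\gamma_1)}{n-1}}$ from below — the subtlety is that if $\phi(\gamma_1)\to+\infty$ this lower bound degenerates. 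This is where I would use $f$-completeness rather than merely a bound on $v$.

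The key observation tying things together is the reparametrization identity: if $\gamma_2$ traverses its image, reparametrized by $g_L$-arclength as a curve $\sigma$ in $L$, then the $f_L$-completeness integral $\int e^{\frac{-2 f_L(\sigma(u))}{n-1}}\,du$ along $\sigma$ equals, after the change of variables $u = u(s)$ with $\frac{du}{ds} = |\dot\gamma_2(s)|_{g_L} = \sqrt{C}\, e^{\frac{-2\phi(\gamma_1(s))}{n-1}}$, the integral $\sqrt{C}\int e^{\frac{-2\phi(\gamma_1(s))}{n-1}} e^{\frac{-2 f_L(\gamma_2(s))}{n-1}}\,ds$, which is $\sqrt{C}$ times the $f$-completeness integral of $M$ along $\gamma$ (using $f = \phi + f_L$). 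So $f$-completeness of $M$ transfers directly into $f_L$-completeness of the fiber along the projected geodesics, and also shows the projected geodesics have divergent $f_L$-completeness integral, which in particular rules out $\phi(\gamma_1)$ running off to $+\infty$ too fast and completes item (1).

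For item (2), I would spell this out carefully: given $y \in L$ and a minimizing unit speed geodesic $\sigma$ in $L$ starting at $y$, I lift to the geodesic $\gamma$ in $M$ with $\gamma(0) = (r_0, y)$ for suitable $r_0$ and initial velocity chosen so that $\gamma_2$ covers $\sigma$; one must check this lift is itself minimizing in $M$ (or pass to a minimizing geodesic in $M$ with the same projection, as in Proposition \ref{Proposition:WPLines}(1), possibly shrinking the interval), and then apply the reparametrization identity above. Since $C = e^{\frac{4\phi(\gamma_1(t_0))}{n-1}}$ at any critical point $t_0$ of $\gamma_1$ and $\phi$ need not be bounded here, I should instead argue that $C$ is simply a fixed positive constant for each such lift and that the divergence of the $M$-integral $\int_0^\infty e^{\frac{-2\phi(\gamma_1)}{n-1}}e^{\frac{-2f_L(\gamma_2)}{n-1}}\,ds$ (which holds because $M$ is $f$-complete, at least along minimizing rays) forces divergence of $\int_0^\infty e^{\frac{-2f_L(\sigma(u))}{n-1}}\,du$. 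Taking the infimum over $\sigma$ and the limsup in length then gives precisely the $f_L$-completeness of $(L,g_L,f_L)$.

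The main obstacle I anticipate is the bookkeeping around minimization and parametrization: matching a minimizing geodesic $\sigma$ in $L$ to a \emph{minimizing} geodesic $\gamma$ in $M$ (the lift of a minimizing geodesic need not be minimizing, and conversely one wants the projection of a minimizer to reach arbitrarily far in $L$), together with carefully tracking how $l(\gamma) = r$ in $M$ relates to the $g_L$-length of $\sigma$ under the warping — since the warping can contract or expand lengths, the supremal distances in the two $\limsup$'s are linked but not equal. Handling the possibility that the $\mathbb{R}$-projection $\gamma_1$ is unbounded (so $\phi(\gamma_1)$ may be unbounded) without a two-sided bound on $\phi$ is the genuinely delicate point, and it is exactly resolved by the reparametrization identity transferring the $f$-completeness integral of $M$ verbatim (up to the constant $\sqrt C$) to the $f_L$-completeness integral of the fiber.
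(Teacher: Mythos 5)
Your approach is essentially the paper's: you correctly identify the conserved quantity from the warped-product geodesic equations, and the reparametrization identity
\[
\int_0^r e^{-\frac{2f_L(\sigma(u))}{n-1}}\,du \;=\; \sqrt{C}\,\int_0^t e^{-\frac{2\phi(\gamma_1(s))}{n-1}}e^{-\frac{2f_L(\gamma_2(s))}{n-1}}\,ds
\]
is exactly the paper's equation relating the two $f$-completeness integrals. Your treatment of part (1) is sound (and slightly more roundabout than the paper's, which simply observes that if $\gamma_2$ had finite $g_L$-length then $f_L\circ\gamma_2$ would be bounded, forcing $\int_0^\infty e^{-2\phi(\gamma_1)/(n-1)}ds=\infty$ from $f$-completeness, contradicting the finiteness of the length of $\gamma_2$). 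However, for part (2) there is a genuine gap that you flag but do not close. You write that ``$C$ is simply a fixed positive constant for each such lift'' and then immediately ``take the infimum over $\sigma$''; but $f_L$-completeness is defined via $\limsup_r \inf_{l(\beta)=r}$ over \emph{all} minimizing geodesics $\beta$ from a fixed $p\in L$, and each $\beta$ comes with its own constant $C_\gamma=\sqrt{C}$. If $\sqrt{C_\gamma}$ can tend to $0$ along a family of $\beta$'s, the identity transfers nothing: the left side can stay bounded even while the right side diverges. The paper closes exactly this gap with a compactness argument producing uniform constants $C_1,C_2>0$ with $C_1 e^{-2\phi(\gamma_1)/(n-1)}\leq |\dot\gamma_2|_{g_L}\leq C_2 e^{-2\phi(\gamma_1)/(n-1)}$ for every $\beta$ emanating from $p$; the lower bound $C_1$ is what makes the infimum argument work.

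There is a second, related omission: you need to know that as the $g_L$-length $r$ of $\beta$ tends to infinity, the $g$-length $t$ of the corresponding minimizing geodesic $\gamma$ in $M$ also tends to infinity, uniformly over the $\beta$'s — otherwise the $f$-completeness hypothesis on $M$ gives you nothing about the relevant $M$-integrals. The paper gets this from the \emph{upper} bound $C_2$, via
$r = \mathrm{length}(\beta) \leq C_2 \int_0^t e^{-2\phi(\gamma_1(s))/(n-1)}\,ds$,
which forces $t\to\infty$ as $r\to\infty$. Your proposal gestures at this as a ``bookkeeping'' issue around how $l(\gamma)=r$ relates to the $g_L$-length, but the claim that the reparametrization identity resolves it ``verbatim (up to the constant $\sqrt C$)'' is not correct: both the uniform lower bound $C_1$ and the uniform upper bound $C_2$ are genuinely needed, and establishing them is a step of the proof, not an afterthought.
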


\begin{proof} 
Assume that $\gamma$ is a line.  From  part  (1) of Proposition \ref{Proposition:WPLines}, which is true for any warped product, in order to show that $\gamma_2$ is a line we just need to show that the length of both branches of $\gamma_2(s)$  as $s \rightarrow \infty$ and $s \rightarrow -\infty$ are infinite in $g_L$.  From the geodesic equations for the warped product, we have $e^{\frac{4\phi(\gamma_1(s))}{n-1} } g_L(\dot{\gamma_2}, \dot{\gamma_2}) = C$ for some constant $C$.  Then 
\begin{eqnarray*}
length(\gamma_2|_{[0, \infty)}) &=& \int_0^{\infty} |\dot{\gamma_2}|_{g_L} ds = C \int_0^{\infty} e^{\frac{-2\phi(\gamma_1(s))}{n-1} }ds. 
\end{eqnarray*}

Assume for contradiction that  $\gamma_2$ had finite length in $L$.  Then the function $ e^{\frac{-2f_L(\gamma_2(s))}{n-1}} $ is uniformly  bounded in $s$  and so  the $f$-completeness assumption applied to $\gamma$ implies that $\int_0^{\infty} e^{\frac{-2\phi(\gamma_1(s))}{n-1} } ds$ is infinite.  Up to a constant this is the length of $\gamma_2$, so we obtain a contradiction.  The same argument also shows that the length of the branch of $\gamma_2$ with $s \rightarrow -\infty$ is also infinite. 

In order to show (2), fix a point $p \in L$ and let $\beta(\tau)$ be a unit speed geodesic in $L$ with $\beta(0) = p$ which is minimizing for $\tau \in [0,r]$.  We want to estimate  $\int_0^r e^{\frac{-2f_L(\beta(\tau))}{n-1} }  d\tau$.  First note that from part (1) of Proposition \ref{Proposition:WPLines} and the uniqueness of minimizing geodesics  that there is a geodesic in $M$ which is of the form $\gamma(s) = (\gamma_1(s), \gamma_2(s))$  $s \in (0, t)$ such that the image of $\gamma_2$ is the image of $\beta$.   

We again apply the warped product geodesic equations to the geodesic $\gamma$ to see that there is a constant $C_{\gamma}  \neq 0$ such that  $ |\dot{\gamma_2}|_{g_L} =  C_{\gamma} e^{\frac{-2\phi(\gamma_1(s))}{n-1} } $.  By compactness, we can thus choose $C_1, C_2 $ uniformly so that   $C_1e^{\frac{-2\phi(\gamma_1(s))}{n-1} }  \leq   |\dot{\gamma_2}|_{g_L} \leq  C_2 e^{\frac{-2\phi(\gamma_1(s))}{n-1} } $ for every unit speed geodesic $\beta$ with $\beta(0) = p$.   
\begin{eqnarray}
\nonumber \int_0^{t} \left(e^{\frac{-2\phi(\gamma_1(s))}{n-1} }  e^{\frac{-2f_L(\gamma_2(s))}{n-1} } \right) ds &\leq & \frac{1}{C_1} \int_0^{t} e^{\frac{-2f_L(\gamma_2(s))}{n-1} }  |\dot{\gamma_2}|_{g_L} ds\\
 &=&\frac{1}{C_1}  \int_0^r e^{\frac{-2f_L(\beta(\tau))}{n-1} }  d\tau \label{LastEqn1}
 \end{eqnarray}
 where in the last line, we have re-parametrized the curve $\gamma_2$ by arc-length in $g_L$ to obtain $\beta$.    Moreover, 
 \begin{eqnarray}
r = length(\beta)  =   length(\gamma_2) \leq  C_2 \int_0^{t} e^{\frac{-2\phi(\gamma_1(s))}{n-1} }  ds  \label{LastEqn2}
\end{eqnarray}
where $t$ is the length in $M$ of the geodesic $\gamma$. 

(\ref{LastEqn2}) implies that as $r \rightarrow \infty$,  $t\rightarrow \infty$.  Then $f$-completeness of $M$ implies that the left hand side of (\ref{LastEqn1}) blows up as $r \rightarrow \infty$ and thus so does the right hand side,  showing that  $L$ is $f_L$-complete. 
  \end{proof} 
  
  On the other hand, in the non-gradient setting, we do not have the applications of the splitting theorem to the universal cover of a compact manifold because it is not true that a vector field lifted to the universal cover is $X$-complete. The question of the extent to which the Theorems \ref{Thm:FundGroup} and \ref{Theorem:LocallyHomogeneous} are true for non-gradient fields appears to be largely open. 

\begin{bibdiv}
\begin{biblist}
\bib{Bakry}{article}{
   author={Bakry, Dominique},
   title={L'hypercontractivit\'e et son utilisation en th\'eorie des
   semigroupes},
   language={French},
   conference={
      title={Lectures on probability theory},
      address={Saint-Flour},
      date={1992},
   },
   book={
      series={Lecture Notes in Math.},
      volume={1581},
      publisher={Springer, Berlin},
   },
   date={1994},
   pages={1--114},
}

\bib{BE}{article}{
   author={Bakry, D.},
   author={{\'E}mery, Michel},
   title={Diffusions hypercontractives},
   language={French},
   conference={
      title={S\'eminaire de probabilit\'es, XIX, 1983/84},
   },
   book={
      series={Lecture Notes in Math.},
      volume={1123},
      publisher={Springer},
      place={Berlin},
   },
   date={1985},
   pages={177--206},
}

%\bib{Besse}{book}{
%   author={Besse, Arthur L.},
%   title={Einstein manifolds},
%   series={Ergebnisse der Mathematik und ihrer Grenzgebiete (3) [Results in
%   Mathematics and Related Areas (3)]},
%   volume={10},
%   publisher={Springer-Verlag, Berlin},
%   date={1987},
%   pages={xii+510},
%}

\bib{Borell}{article}{
   author={Borell, Christer},
   title={Convex measures on locally convex spaces},
   journal={Ark. Mat.},
   volume={12},
   date={1974},
   pages={239--252},
}

\bib{BrascampLieb}{article}{
   author={Brascamp, Herm Jan},
   author={Lieb, Elliott H.},
   title={On extensions of the Brunn-Minkowski and Pr\'ekopa-Leindler
   theorems, including inequalities for log concave functions, and with an
   application to the diffusion equation},
   journal={J. Functional Analysis},
   volume={22},
   date={1976},
   number={4},
   pages={366--389},
}
% \bib{Brinkmann}{article}{
%   author={Brinkmann, H. W.},
%   title={Einstein spaces which are mapped conformally on each other},
%   journal={Math. Ann.},
%   volume={94},
%   date={1925},
%   number={1},
%   pages={119--145},
%}
%
%\bib{Case}{article}{author={Case, Jeffrey S.},
%   title={Smooth metric measure spaces and quasi-Einstein metrics},
%   journal={Internat. J. Math.},
%   volume={23},
%   date={2012},
%   number={10},
%}
%\bib{CGY1}{article}{
%   author={Chang, Sun-Yung A.},
%   author={Gursky, Matthew J.},
%   author={Yang, Paul},
%   title={Conformal invariants associated to a measure},
%   journal={Proc. Natl. Acad. Sci. USA},
%   volume={103},
%   date={2006},
%   number={8},
%   pages={2535--2540},
%}
%
%\bib{CGY2}{article}{
%   author={Chang, Sun-Yung A.},
%   author={Gursky, Matthew J.},
%   author={Yang, Paul},
%   title={Conformal invariants associated to a measure: conformally
%   covariant operators},
%   journal={Pacific J. Math.},
%   volume={253},
%   date={2011},
%   number={1},
%   pages={37--56},
%}
%
\bib{CheegerGromoll}{article}{
   author={Cheeger, Jeff},
   author={Gromoll, Detlef},
   title={The splitting theorem for manifolds of nonnegative Ricci
   curvature},
   journal={J. Differential Geometry},
   volume={6},
   date={1971/72},
   pages={119--128},
}
%\bib{Cetc}{article}{
%author={Corwin, I.},
%author={Hoffman, N.},
%author={Hurder, S.}
%author={Sesum, V.}
%author={Xu, Y.}
% title={Differential geometry of manifolds with density},
% journal={ Rose-Hulman Und. Math. J.},
% volume={7},
% date={2006}, 
% number={1} 
% note={article 2},
% }
%
%\bib{CM}{article}{
%   author={Corwin, Ivan},
%   author={Morgan, Frank},
%   title={The Gauss-Bonnet formula on surfaces with densities},
%   journal={Involve},
%   volume={4},
%   date={2011},
%   number={2},
%   pages={199--202},
%   }
%   
%   \bib{CS}{article}{
%   author={Croke, Christopher B.},
%   author={Schroeder, Viktor},
%   title={The fundamental group of compact manifolds without conjugate
%   points},
%   journal={Comment. Math. Helv.},
%   volume={61},
%   date={1986},
%   number={1},
%   pages={161--175},}
%
%
%\bib{doC}{book}{
%author={do Carmo, Manfredo Perdig{\~a}o},
%   title={Riemannian geometry},
%   series={Mathematics: Theory \& Applications},
%   note={Translated from the second Portuguese edition by Francis Flaherty},
%   publisher={Birkh\"auser Boston Inc.},
%   place={Boston, MA},
%   date={1992},
%
%}
%
\bib{FangLiZhang}{article}{
   author={Fang, Fuquan},
   author={Li, Xiang-Dong},
   author={Zhang, Zhenlei},
   title={Two generalizations of Cheeger-Gromoll splitting theorem via
   Bakry-\'Emery Ricci curvature},
   journal={Ann. Inst. Fourier (Grenoble)},
   volume={59},
   date={2009},
   number={2},
   pages={563--573},

}

\bib{FGKU}{article}{
   author={Fern{\'a}ndez-L{\'o}pez, M.},
   author={Garc{\'{\i}}a-R{\'{\i}}o, E.},
   author={Kupeli, D. N.},
   author={{\"U}nal, B.},
   title={A curvature condition for a twisted product to be a warped
   product},
   journal={Manuscripta Math.},
   volume={106},
   date={2001},
   number={2},
   pages={213--217},
}

\bib{Gigli1}{article}{
   author={Gigli, Nicola},
   title={An overview of the proof of the splitting theorem in spaces with
   non-negative Ricci curvature},
   journal={Anal. Geom. Metr. Spaces},
   volume={2},
   date={2014},
   pages={169--213},
}

\bib{Gigli2}{article}{
 author={Gigli, Nicola},
 title={The splitting theorem in non-smooth context},
 note={ arXiv:1302.5555},
 }

%\bib{GM}{article}{
%   author={Gromoll, Detlef},
%   author={Meyer, Wolfgang},
%   title={On complete open manifolds of positive curvature},
%   journal={Ann. of Math. (2)},
%   volume={90},
%   date={1969},
%   pages={75--90},
%   }
%
%\bib{GW}{book}{
%  author={Gromoll, Detlef},
%   author={Walschap, Gerard},
%   title={Metric foliations and curvature},
%   series={Progress in Mathematics},
%   volume={268},
%   publisher={Birkh\"auser Verlag},
%   place={Basel},
%   date={2009},
%}
%
%\bib{JW}{article}{
%author={Jauregui, Jeffrey L. }
%author={Wylie, William }
%title={Conformal diffeomorphisms of gradient Ricci solitons and generalized quasi-Einstein Manifolds}
%journal={J. Geom. Anal.}
%note={To appear. arXiv:1209.1118.}
%}
%

\bib{KennardWylie}{article}{
   author = {Kennard, Lee},
   author = {Wylie, William},
   title = {Positive weighted sectional curvature},
   journal = {Indiana Math J.},
   note = {To appear, arXiv:1410.1558},
}

\bib{Klartag}{article}{
   author = {Klartag, Bo'az},
   title={Needle decompositions in Riemannian geometry},
   journal={Mem. Amer. Math. Soc.},
   note={To appear, arXiv:1408.6322},
}
%\bib{Kling}{book}{
%   author={Klingenberg, Wilhelm},
%   title={Riemannian geometry},
%   series={de Gruyter Studies in Mathematics},
%   volume={1},
%   publisher={Walter de Gruyter \& Co.},
%   place={Berlin},
%   date={1982},
%
%}
%

\bib{KolesnikovMilman}{article}{
author={Kolesnikov, Alexander V.},
author={Milman, Emanuel},
title={Poincar\'e and Brunn-Minowski inequalities on weighted Riemannian manifolds with boundary},
note={arXiv:1310.2526},
}
%%  \bib{KN}{book}{
%%   author={Kobayashi, Shoshichi},
%%   author={Nomizu, Katsumi},
%%   title={Foundations of differential geometry. Vol. I},
%%   series={Wiley Classics Library},
%%   note={Reprint of the 1963 original},
%%   publisher={John Wiley \& Sons Inc.},
%%   place={New York},
%%   date={1996},
%%}
%
%\bib{Kuhnel}{article}{
%   author={K{\"u}hnel, Wolfgang},
%   title={Conformal transformations between Einstein spaces},
%   conference={
%      title={Conformal geometry},
%      address={Bonn},
%      date={1985/1986},
%   },
%   book={
%      series={Aspects Math., E12},
%      publisher={Vieweg},
%      place={Braunschweig},
%   },
%   date={1988},
%   pages={105--146}
%}
%
%\bib{KR}{article}{
%   author={K{\"u}hnel, Wolfgang},
%   author={Rademacher, Hans-Bert},
%   title={Einstein spaces with a conformal group},
%   journal={Results Math.},
%   volume={56},
%   date={2009},
%   number={1-4},
%   pages={421--444}
%}
%  
\bib{Lichnerowicz1}{article}{
   author={Lichnerowicz, Andr{\'e}},
   title={Vari\'et\'es riemanniennes \`a tenseur C non n\'egatif},
   language={French},
   journal={C. R. Acad. Sci. Paris S\'er. A-B},
   volume={271},
   date={1970},
   pages={A650--A653},
 }
 
 \bib{Lichnerowicz2}{article}{
   author={Lichnerowicz, Andr{\'e}},
   title={Vari\'et\'es k\"ahl\'eriennes \`a premi\`ere classe de Chern non
   negative et vari\'et\'es riemanniennes \`a courbure de Ricci
   g\'en\'eralis\'ee non negative},
   language={French},
   journal={J. Differential Geometry},
   volume={6},
   date={1971/72},
   pages={47--94},
   }
%   
%   \bib{Lott}{article}{
%   author={Lott, John},
%   title={Some geometric properties of the Bakry-\'Emery-Ricci tensor},
%   journal={Comment. Math. Helv.},
%   volume={78},
%   date={2003},
%   number={4},
%   pages={865--883}}
%   
%\bib{Lott2}{article}{
% author={Lott, John},
%   title={Remark about scalar curvature and Riemannian submersions},
%   journal={Proc. Amer. Math. Soc.},
%   volume={135},
%   date={2007},
%   number={10},
%   pages={3375--3381},
%
%}
%
%\bib{LV}{article}{
%   author={Lott, John},
%   author={Villani, C{\'e}dric},
%   title={Ricci curvature for metric-measure spaces via optimal transport},
%   journal={Ann. of Math. (2)},
%   volume={169},
%   date={2009},
%   number={3},
%   pages={903--991},
%   }
%   
\bib{Milman1}{article}{
author={Milman, Emanuel}
title={Beyond traditional Curvature-Dimension I: new model spaces for isoperimetric and concentration inequalities in negative dimension}
journal={Trans. Amer. Math. Soc.},
note={arXiv:1409.4109}
}

\bib{Milman2}{article}{
author={Milman, Emanuel},
title={Harmonic Measures on the Sphere via Curvature-Dimension},
   journal={ Ann. Fac. Sci. Toulouse Math.},
note={arXiv:1505.04335},
}
%\bib{MW}{article}{
%      author={Munteanu, Ovidiu},
%   author={Wang, Jiaping},
%   title={Analysis of weighted Laplacian and applications to Ricci solitons},
%   journal={Comm. Anal. Geom.},
%   volume={20},
%   date={2012},
%   number={1},
%   pages={55--94}
%}   
%
%\bib{Morgan}{article}{
%   author={Morgan, Frank},
%   title={Manifolds with density},
%   journal={Notices Amer. Math. Soc.},
%   volume={52},
%   date={2005},
%   number={8},
%   pages={853--858},
%}  
%
\bib{Ohta}{article}{
author={Ohta, Shin-ichi}
title={$(K,N)$-convexity and the curvature-dimension condition for negative $N$}
   journal={J. Geom. Anal.},
note={arXiv:1310.7993}
}

\bib{OhtaTakatsu1}{article}{
   author={Ohta, Shin-ichi},
   author={Takatsu, Asuka},
   title={Displacement convexity of generalized relative entropies},
   journal={Adv. Math.},
   volume={228},
   date={2011},
   number={3},
   pages={1742--1787},
}

\bib{OhtaTakatsu2}{article}{
   author={Ohta, Shin-Ichi},
   author={Takatsu, Asuka},
   title={Displacement convexity of generalized relative entropies. II},
   journal={Comm. Anal. Geom.},
   volume={21},
   date={2013},
   number={4},
   pages={687--785},
}

\bib{O'Neill}{book}{
   author={O'Neill, Barrett},
   title={Semi-Riemannian geometry, },
   series={Pure and Applied Mathematics},
   volume={103},
   publisher={Academic Press, Inc. [Harcourt Brace Jovanovich, Publishers],
   New York},
   date={1983},
   pages={xiii+468},
}

\bib{Wilking}{article}{
   author={Wilking, Burkhard},
   title={On fundamental groups of manifolds of nonnegative curvature},
   journal={Differential Geom. Appl.},
   volume={13},
   date={2000},
   number={2},
   pages={129--165},
}
\bib{WeiWylie}{article}{
   author={Wei, Guofang},
   author={Wylie, Will},
   title={Comparison geometry for the Bakry-Emery Ricci tensor},
   journal={J. Differential Geom.},
   volume={83},
   date={2009},
   number={2},
   pages={377--405},}   
   
\bib{WoolgarWylie}{article}{
   author={Woolgar, Eric},
   author={Wylie, William},
   title={Cosmological singularity theorems and splitting theorems for
   $N$-Bakry-\'Emery spacetimes},
   journal={J. Math. Phys.},
   volume={57},
   date={2016},
   number={2},
}

\bib{Wylie}{article}{
   author={Wylie, William},
   title={Sectional curvature for Riemannian manifolds with density},
   journal={Geom. Dedicata},
   volume={178},
   date={2015},
   pages={151--169},
}

\bib{WylieYeroshkin}{article}{
author ={Wylie, William},
author={Yeroshkin, Dmytro},
title={On the geometry of Riemannian manifolds with density},
note ={arXiv:1602.08000},
}
  
%\bib{W}{article}{
%author={Wylie, William}
%title={Some curvature pinching results for Riemannian manifolds with density}
%note={In preparation}
%}
%
\end{biblist}
\end{bibdiv}

\end{document}